\newcounter{notes}%
\definecolor{darkgreen}{rgb}{0.0, 0.5, 0.0}
\newtheorem{theorem}{Theorem}[section]
\newtheorem{lemma}[theorem]{Lemma}
\newtheorem{corollary}[theorem]{Corollary} 
\newtheorem{definition}[theorem]{Definition} 
\newtheorem{proposition}[theorem]{Proposition}
\newtheorem{remark}[theorem]{Remark}
\def\smallskip{\vspace{.15cm}}
\def\medskip{\vspace{.3cm}}
\def\text{\mbox}
\def\rh2{{\mathbb R}{\mathbb H}^2}
\def\ch2{{\mathbb C}{\mathbb H}^2}
\def\RR{{\mathbb R}}
\def\CC{{\mathbb C}}
\def\QQ{{\mathbb Q}}
\def\HH{{\mathbb H}}
\def\ZZ{{\mathbb Z}}
\def\NN{{\mathbb N}}
\def\FF{{\mathbb F}}
\def\HH{{\mathbb H}}
\def\RP2{{\mathbb{RP}}^2}
\def\RP3{{\mathbb{RP}}^3}
\def\RP{{\mathbb{RP}}}
\def\Id{\operatorname{Id}}
\def\SL{\operatorname{SL}}
\def\Id{\operatorname{Id}}
\def\PSL{\operatorname{PSL}}
\def\SO{\operatorname{SO}}
\def\PGL{\operatorname{PGL}}
\def\GL{\operatorname{GL}}
\def\int{\operatorname{int}}
\def\Inn{\operatorname{Inn}}
\def\Stab{\operatorname{Stab}}
\def\H2R{{\mathbb H}^2\times {\mathbb R}}
\def\C2{\operatorname{C^2}}
\def\Aut{\mathrm{Aut}}
\def\Stab{\mathrm{Stab}}
\def\Fix{\mathrm{Fix}}
\def\abstrc{\edit{\mathrm{abstrc}}}
\def\Fix{\edit{\mathrm{Fix}}}
\def\Diag{\edit{\mathrm{Diag}}}
\def\det{\edit{\mathrm{det}}}
\newcommand{\edit}[1]{{\color{black}#1}} 
\newcommand{\editA}[1]{{\color{black}#1}} 
\newcommand{\editB}[1]{{\color{black}#1}}
\definecolor{back}{RGB}{255,255,255}
\definecolor{fore}{RGB}{0,0,0}
\definecolor{title}{RGB}{255,0,90}
\definecolor{green}{rgb}{0.0, 0.5, 0.0}
\definecolor{purple}{rgb}{0.5, 0.0, 0.5}
\definecolor{bluegreen}{rgb}{0.0,0.5, 0.5}
\definecolor{orange}{rgb}{1,0.5, 0.1}
\definecolor{redgreen}{rgb}{0.5, 0.5, 0.0}
\def\green{\color{green}}
\def\green{\color{green}}
\def\g2{{\green 2}}
\newcommand{\bv}{\left[\begin{array}{c}}
\newcommand{\ev}{\end{array}\right]}
\newcommand{\bbmat}{\begin{bmatrix}} 
\newcommand{\ebmat}{\end{bmatrix}}
\newcommand{\bmat}{\begin{matrix}}
\newcommand{\emat}{\end{matrix}}
\newcommand{\bpmat}{\begin{pmatrix}} 
\newcommand{\epmat}{\end{pmatrix}}
\begin{document}
\title{Chabauty Limits of Groups of Involutions In $\SL(2,F)$ for local fields} 
\author{Corina Ciobotaru}
\address{Aarhus Institute of Advanced Studies and Department of Mathematics, Aarhus University, Ny Munkegade 118, 8000 Aarhus C, Denmark}
\email{corina.ciobotaru@gmail.com}

\author{Arielle Leitner}
\address{Afeka College of Engineering, 38 Mivtsa Hakadesh st, Tel Aviv, Israel}
\email{ariellel@afeka.ac.il}
%\thanks{corina.ciobotaru@gmail.com}\thanks{aleitner.math@gmail.com}
\date{September 23, 2023}

\begin{abstract}
We classify Chabauty limits of groups fixed by various (abstract) involutions over $\SL(2,F)$, where $F$ is a finite field-extension of $\QQ_p$, with $p\neq 2$.   To do so, we first classify abstract involutions over $\SL(2,F)$ with $F$ a quadratic extension of $\QQ_p$, and prove $p$-adic polar decompositions with respect to various subgroups of $p$-adic $\SL_2$.  Then we classify Chabauty limits of: $\SL(2, F) \subset \SL(2,E)$ where $E$ is a quadratic extension of $F$, of $\SL(2, \RR) \subset \SL(2, \CC)$, and of $H_\theta \subset \SL(2,F)$, where $H_\theta$ is the fixed point group of an $F$-involution $\theta$ over $\SL(2,F)$.
\end{abstract}

\maketitle

\section{Introduction} %%%%%%%%%%%%%%%%%%%

Let $k$ be a local field, which is not necessarily algebraically closed. Let $G$ be a reductive algebraic group defined over $k$. A \textit{symmetric $k$-variety} of $G$ is the quotient $G(k)/H(k)$, where $H$ is the fixed point group of an involution $\theta$ defined over $k$ of $G$, and $G(k), H(k)$ denote the sets of $k$-rational points of $G$, $H$. Symmetric $k$-varieties appear naturally and play a central role in the representation theory of algebraic groups, the Langlands program, or the Plancherel formulas for Riemannian symmetric spaces (see \cite[Introduction]{Hel_k_invol}).

When $k=\RR$, or $\CC$, a space $G(k)/H(k)$ as above is an \textit{affine symmetric space}, generalizing the theory of Riemannian symmetric spaces. Familiar examples of affine symmetric spaces come from quadratic forms on $\RR^n$, or $\CC^n$, of signature $(p, q)$, where the fixed point group $H(k)$ is the corresponding orthogonal group  $O(p,q)$ preserving that quadratic form. In this way one obtains  spherical geometry, hyperbolic geometry, de Sitter geometry, or anti de Sitter geometry. Although  these geometries have different curvature, one can find \textit{geometric transitions} between them: continuous paths of geometric structures that change the type of the model geometry in the limit, also known as a \textit{limit of geometries}.  Geometric transitions arise  in physics: 
deforming general relativity into special relativity, or quantum mechanics into Newtonian mechanics. Cooper, Danciger and Weinhard \cite{CDW} classify limits of geometries coming from affine symmetric spaces over $\RR$.   In particular, they classify the limits of geometries of all of the groups $\SO(p,q)$ inside of $\GL(n, \RR)$.  Their approach uses a root space decomposition  of real Lie algebras.  Trettel \cite{Trettel} provides another approach in Chapter 6 of his PhD thesis using the wonderful compactification. 

 Felix Klein’s Erlangen Program encodes geometries and uniquely determines them by their groups of isometries. Therefore, studying limits of geometries is equivalent to studying limits of groups of isometries (i.e. Lie groups) in the \emph{Chabauty topology}, (see Section \ref{sec::Chabauty} for the concrete definitions and properties).

Our article is the first installment of an analogous classification of Chabauty limits as in \cite{CDW} for $p$-adic groups fixed by (abstract) involutions over $\SL(2,k)$, where $k$ is a finite field-extension of $\QQ_p$ with $p\neq 2$. In future work we will study the general case of $\SL(n,k)$.  

We  use the classification of the  isomorphism classes of $k$-involutions of  a connected reductive algebraic group defined over $k$  given in \cite{Hel_k_invol}. A simple characterization of the isomorphism classes of $k$-involutions of $\SL(n,k)$ is given in \cite{HelWD}. Further, \cite{Beun, HW_class, Sutherland} study $\QQ_p$-involutions and their fixed point groups for $\SL(2,\QQ_p)$.

Using the results of Borel--Tits \cite{BoTi}, and Steinberg \cite{Stein}, our first main result is in Section \ref{sec:autom} and gives a classification of all abstract and $E$-involutions $\theta$ of $\SL(2,E)$, where $E= \QQ_p (\alpha)$ is a quadratic extension of $\QQ_p$. %We show in Theorem \ref{thm_main::inv_SL_E}  \edit{that} they are all the composition of an inner automorphism $\iota_A$ of $\GL(2,E)$ with the identity or the 
Let $\sigma$ be the field conjugation automorphism on $E$ (i.e.  $\sigma : E \to E$ given by $\sigma (a + \alpha b) = a - \alpha b$). 
% and we determine the matrices $A \in \GL(2,E) $ for the inner automorphism $\iota_A$.  
 \editA{If $A \in \SL(2,E)$ is a matrix and $\gamma \in \Aut(E)$ is a field automorphism, then $\gamma(A)$ is the matrix where $\gamma$ is applied to every matrix entry. }

\begin{theorem}[\edit{See Thm.  \ref{thm::inv_SL_E}} ]
\label{thm_main::inv_SL_E}
Let  $E=\QQ_p(\alpha)$ be a quadratic extension of $\QQ_p$. Any abstract involution $\theta$ of $\SL(2,E)$ is of the form $ \theta = \iota_A \circ \gamma$ \edit{where $\gamma \in \{ \Id, \sigma \}$ and the matrices $A$ are written explicitly.} 
\iffalse 
\edit{either}: 
\begin{enumerate}
\item
$\gamma= \sigma$  and $A \in \edit{ \left \{\left( \begin{smallmatrix} z &y \\ 1 & -\sigma(z) \end{smallmatrix} \right),  \left( \begin{smallmatrix} x &0 \\ 0 & 1 \end{smallmatrix} \right)\right \}} \subset \GL(2,E)$, with $y \in \QQ_p, z,x\in E$, with $z\sigma(z)+ y \neq 0$ and $x\sigma(x)=1,$
\item
\edit{or} $\gamma=\Id$ and \edit{$A$ is $\SL(2,E)$-conjugate to a matrix of the form} $\left( \begin{smallmatrix} 0 &1 \\ a &0 \end{smallmatrix} \right)$, with $a\in E^{*}/(E^{*})^{2}$. %up to conjugacy.
\end{enumerate}
\fi
\end{theorem}

We further compute the fixed point groups $H_\theta$ of the  involutions $\theta$ from Theorem \ref{thm_main::inv_SL_E} using the ends in the ideal boundary $\partial T_K$ of the Bruhat--Tits tree $T_K$ of $\SL(2,K)$, where $K$ is finite field-extension of $E$, and thus of $\QQ_p$, that is chosen accordingly. We will show those fixed point groups are either trivial, or compact, or  $\GL(2,E)$-conjugate to either the diagonal in $\SL(2, E)$, or to $\SL(2, \QQ_p)$.

 In Section \ref{sec:autom} we obtain a geometric interpretation  of the fixed point groups $H_{\theta_a}$ of the $k$-involutions computed in \cite{HW_class} for $\SL(2,k)$, when $k$ is a  finite field-extension of $\QQ_p$.  %(i.e. $\theta_a$ as in Theorem  \ref{thm_main::inv_SL_E}(2)); 
\edit{We compute the \editB{fixed point} groups of involutions for the two cases of Theorem \ref{thm_main::inv_SL_E}.}

%Next, we classify the rest of  $H_\theta$  given by Theorem \ref{thm_main::inv_SL_E}.
\begin{theorem}[\edit{See Cor. \ref{lem::geom_k_inv_SL_2}, Thm. \ref{thm::inv_SL_E_2}}]
\label{thm_main::inv_SL_E_2}
\begin{enumerate} 
\item \edit{Let $F$ be a finite field extension of $\QQ_p$, take $\gamma = \sigma$, and $a\in F^{*}/(F^{*})^{2}$, 
 then the \editB{fixed point} groups of involutions of $\theta_a$ are of the form:}
$$H_{\theta_a}: =\{  \left( \begin{smallmatrix} x &y \\ ay &x\end{smallmatrix} \right) \in \SL(2,F)\; \vert \; x^2 - ay^2 =1 \}. $$  
%For a precise and technical statement see Corollary \ref{lem::geom_k_inv_SL_2}.
%\edit{The element $a$ depends on a certain field extension $K_a = F (\sqrt a)$.}
\item Let  $E=\QQ_p(\alpha)$ be a quadratic extension of $\QQ_p$, and \edit{$\gamma = \Id$}.
%$\theta$ as in Theorem  \ref{thm_main::inv_SL_E}(1).
Then $H_{\theta}:=\{g \in \SL(2,E) \; \vert \; \theta(g)= g\}$ \edit{is a ``conjugate" of $\SL(2,\QQ_p)$, given explicitly.} 
\end{enumerate} 
\iffalse
\edit{where}, either:
\begin{enumerate}
\item
$H_{\theta}$ is  $\GL(2,E)$-conjugate to $\SL(2,\QQ_p)$
\item
$H_{\theta}=B \SL(2,K^{\sigma}) B^{-1} \cap \SL(2,E)$, for some matrix  $B \in \GL(2,K)$, where $K \neq E$ is some finite field-extension of $E$ and $K^{\sigma}:=\{x \in K \; \vert \; \sigma(x)=x\}$ is the maximal subfield of $K$ %with that property. 
\edit{fixed by $\sigma$}. In particular,  $H_\theta$ is compact or trivial.
\end{enumerate}
\fi 
\end{theorem} 

One of the strategies  to compute Chabauty limits of the fixed point groups $H$ of involutions over $G$ is to employ the polar decomposition $G=\mathcal{K}\mathcal{B}H$, where $\mathcal{K}$ is a compact subset of $G$, and $\mathcal{B}$ is a union of split tori of $G$. If moreover $\mathcal{B}$ is a finite union of such tori, then it is enough to compute the desired Chabauty limits  under conjugation with a sequence of elements from some fixed split  torus in $\mathcal{B}$. We cannot apply directly \cite{BenoistOh} as their result is proven only for $k$-involutions and not for abstract involutions. In Section \ref{sec:polar} we prove a polar decomposition for $k$-involutions and abstract involutions for the $p$-adic $\SL_2$. Along the way we provide different, direct, and more geometric proofs than in \cite{HW_class,HelWD,Hel_k_invol} for the case of  $\SL_2$.   
%Below $\theta_a$  refers to the involutions from Theorem \ref{thm_main::inv_SL_E}(2), and $\theta$ to the involutions from Theorem \ref{thm_main::inv_SL_E}(1).

\begin{proposition}[\edit{See Prop. \ref{prop::polar_decom}}, polar decomposition for various subgroups of $\SL_2$]
\label{prop_main::polar_decom}
Let $F$ be a finite field-extension of $\QQ_p$ and $E=F(\alpha)$ be a quadratic extension of $F$, \edit{and \editB{let} $\theta$, $\theta_a$ \editB{be} the involutions from Theorems \ref{thm_main::inv_SL_E}, \ref{thm_main::inv_SL_E_2}. } Let $H \leq G$ be \edit{one of the following pairs} 
$$(G,H) \in \{(\SL(2,F), H_{\theta_a}), (\SL(2,E),\SL(2,F)), (\SL(2,\QQ_p(\alpha)),H_{\theta})\}.$$
%Denote by $T_G$ the  Bruhat--Tits tree of $G$ and by $T_H$ the (possibly finite)  \edit{$H$-invariant} subtree of $T_G$. Let $I_H$ be the number of $H$-orbits in the ideal boundary $\partial T_G -\partial T_H$, and $\xi_i$ a representative in each such orbit. Let $x_0 \in T_G \cap T_H$ be a vertex, which for the pair $ (\SL(2,E),\SL(2,F))$ will be taken to be the point $0$ as in the \edit{Figures} \ref{fig::fig_unram}, \ref{fig::fig_ram}. 
Then \edit{there is a decomposition}
$$G= \mathcal{K}\mathcal{B}H$$ where $\mathcal{K}$ is a \edit{specific} compact subset of $G$, and $\mathcal{B}= \{\Id\} \bigsqcup\limits_{i\in I_H} A_i$, where $A_i:= \{a_i^{n} \; \vert \; n \in \ZZ\}$ with $a_i \in G$  a hyperbolic element of translation length $2$ and with attractive endpoint in the $H$-orbit of \edit{an end of the tree for $G$}. 
%$\xi_i$.
\end{proposition}

Finally, in Sections \ref{sec:chab_quad}, \ref{sec:chab_real}, \ref{sec:chab_sym} we enumerate  all Chabauty limits of various fixed point groups of involutions over $\SL(2,k)$. The subgroups $B_{k}^{+}$ \edit{are} the upper triangular Borel subgroups of $\SL(2,k)$.  %See the relevant sections for more precise statements. 

\begin{theorem}[\edit{See Thm. \ref{cor::cartan_lim_SL}}]
\label{thm_main::cartan_lim_SL}
Let $F$ be a finite field-extension of $\QQ_p$ and $E=F(\alpha)$ be a quadratic extension of $F$, \edit{so $\alpha^2 \in  F^{*}/(F^{*})^{2}, \alpha \neq 1$}. Then  any Chabauty limit of $\SL(2,F)$ inside  $\SL(2,E)$ is  $\SL(2,E)$-conjugate to either $\SL(2,F)$, or to the subgroup 
$\{\left( \begin{smallmatrix} a-\alpha b& z \\  0 &  a+\alpha b \end{smallmatrix} \right) \; \vert \;  a,b \in F \text{ with }a^2 - \alpha^2 b^{2}=1,  z \in E \} \leq B_E^{+}.$ 
\end{theorem}

\begin{theorem}[\edit{See Thm. \ref{thm::cartan_lim_SL_r}}]
\label{thm_main::cartan_lim_SL_r}
 Any Chabauty limit of $\SL(2,\RR)$ inside  $\SL(2,\CC)$ is  $\SL(2,\CC)$-conjugate to either $\SL(2,\RR)$, or to  the subgroup 
$\{\left( \begin{smallmatrix} a-i b& z \\  0 &  a+i b \end{smallmatrix} \right) \; \vert \;  a,b \in \RR \text{ with }a^2 + b^{2}=1,  z \in \CC\} \leq B_{\CC}^{+}.$ 
\end{theorem}

\begin{theorem}[\edit{See Thm. \ref{thm::k_inv_chabauty}}]
\label{thm_main::k_inv_chabauty}
Let $F$ be a finite field-extension of $\QQ_p$, and $H_{\theta_a}$ %the fixed point group of an involution $\theta_a$ on $\SL(2,F)$ as in Theorem \ref{thm_main::inv_SL_E} \edit{for $\gamma = \sigma$.}
\edit{as in Theorem \ref{thm_main::inv_SL_E_2}(1)}.
Then any Chabauty limit of $H_{\theta_a}$ is either $\SL(2,F)$-conjugate to $H_{\theta_a}$, or to the subgroup \edit{ $\{ \mu \left( \begin{smallmatrix} 1& x \\ 0 &1  \end{smallmatrix} \right)\; |\; x \in F, \mu \in \mu_2  \}$} of the Borel  $B_{F}^{+}\leq \SL(2,F)$, where $\mu _2$ is the group of $2^{nd}$ roots of unity in  $F$.
\end{theorem}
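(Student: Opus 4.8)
The strategy is the polar-decomposition argument sketched just before Proposition~\ref{prop_main::polar_decom}, applied to the pair $(G,H)=(\SL(2,F),H_{\theta_a})$. By Corollary~\ref{lem::geom_k_inv_SL_2} we may take $H:=H_{\theta_a}=\{\left( \begin{smallmatrix} x &y \\ ay &x\end{smallmatrix} \right)\in \SL(2,F)\mid x^{2}-ay^{2}=1\}$, a maximal $F$-torus which contains $-I$ and which is split if $a\in(F^{*})^{2}$ and anisotropic (hence compact) otherwise. Write $L:=\{\mu_2\cdot\left( \begin{smallmatrix} 1 &x \\ 0 &1\end{smallmatrix} \right)\mid x\in F\}=\{\left( \begin{smallmatrix} \eps &x \\ 0 &\eps\end{smallmatrix} \right)\mid \eps=\pm1,\ x\in F\}$, a closed subgroup of $B_{F}^{+}$; since $L$ is normalised by every diagonal matrix of $\GL(2,F)$ and $\GL(2,F)=\SL(2,F)\cdot\{\mathrm{diag}(d,1)\}$, every $\GL(2,F)$-conjugate of $L$ is automatically $\SL(2,F)$-conjugate to $L$. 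Now let $(g_n)\subset\SL(2,F)$ be such that $g_nHg_n^{-1}$ converges in the Chabauty topology (the space of closed subgroups being compact, passing to subsequences costs nothing), and write $g_n=k_nb_nh_n$ via $G=\mathcal K\mathcal B H$. Because $h_nHh_n^{-1}=H$ we have $g_nHg_n^{-1}=k_n(b_nHb_n^{-1})k_n^{-1}$; after a subsequence with $k_n\to k\in\mathcal K$, Chabauty-continuity of conjugation along convergent sequences gives that $\lim_n g_nHg_n^{-1}$ is $\SL(2,F)$-conjugate to $\lim_n b_nHb_n^{-1}$. Here $\mathcal B=\{\Id\}\sqcup\bigsqcup_iA_i$ is a \emph{finite} disjoint union (the orbit number $I_H$ is finite because $H$ acts on the relevant part of $\partial T_G\cong\mathbb{P}^{1}(F)$ through the squaring map of a one-dimensional $F$-torus, whose orbit set injects into $F^{*}/(F^{*})^{2}$), so a further subsequence makes $b_n$ either bounded in $G$ (then eventually constant, giving a conjugate of $H$) or equal to $a_i^{m_n}$ for a fixed $i$ with $|m_n|\to\infty$. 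It thus suffices to identify $\lim_{m\to\pm\infty}a_i^{m}Ha_i^{-m}$.

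A tree-hyperbolic element of translation length $2$ is diagonalisable over $F$ (otherwise its eigenvalues would be unit-norm Galois conjugates, forcing it to fix a point of $T_G$), so write $a_i=Q\,\mathrm{diag}(\lambda,\lambda^{-1})\,Q^{-1}$ with $Q\in\GL(2,F)$, $v(\lambda)=-1$, and put $H':=Q^{-1}HQ$ and $D_m:=\mathrm{diag}(\lambda^{m},\lambda^{-m})$, so that $a_i^{m}Ha_i^{-m}=Q\,(D_mH'D_m^{-1})\,Q^{-1}$. The hypothesis that the endpoints of $a_i$ avoid $\partial T_H$ says precisely that $[e_1]$ and $[e_2]$ are not fixed points of the torus $H'$; since a noncentral element of a maximal torus is regular semisimple and hence has both eigendirections fixed by the whole torus, this yields the structural fact $(\ast)$: \emph{no noncentral element of $H'$ is upper- or lower-triangular} (in the anisotropic case this is automatic, as $H'$ has no $F$-rational fixed point on $\mathbb{P}^{1}$). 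Conjugation by $D_m$ scales the $(1,2)$- and $(2,1)$-entries by $\lambda^{-2m}$ and $\lambda^{2m}$. If $M_m=D_mN_mD_m^{-1}$ with $N_m\in H'$ converges, then $\lambda^{-2m}$ times the $(1,2)$-entry $\beta(N_m)$ of $N_m$ stays bounded, so $\beta(N_m)\to0$; by $(\ast)$ the regular function $\beta$ on the curve $H'$ vanishes only at $\pm I$, and the torus parameter cannot escape to $0$ or $\infty$ without the diagonal entries of $M_m$ diverging (immediate in the anisotropic case, where $H'$ is compact), so $N_m\to\pm I$. Reading off entries, every subsequential limit of $M_m$ equals $\left( \begin{smallmatrix} \eps &\mu \\ 0 &\eps\end{smallmatrix} \right)$ for some $\eps=\pm1$, $\mu\in F$. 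Conversely, $H'$ passes through each of $\pm I$ with tangent line spanned by a matrix $W$ having $W_{12}\neq0$ (again by the endpoint hypothesis, since the tangent to $H'$ at $\pm I$ is $Q^{-1}$ applied to an element with no eigendirection along $e_1$); choosing $N_m\in H'$ with $N_m\mp I$ of size $\asymp|\lambda^{2m}|$ arranged appropriately (the quadratic remainder being negligible after conjugation by $D_m$), one gets $D_mN_mD_m^{-1}\to\left( \begin{smallmatrix} \eps &\mu \\ 0 &\eps\end{smallmatrix} \right)$ for any prescribed $\eps,\mu$. Hence $\lim_{m\to+\infty}a_i^{m}Ha_i^{-m}=Q\,L\,Q^{-1}$, which is $\SL(2,F)$-conjugate to $L$; the $m\to-\infty$ limit is the lower-triangular analogue, $\SL(2,F)$-conjugate to $L$ via the Weyl element.

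Combining the two steps, every Chabauty limit of $H_{\theta_a}$ is $\SL(2,F)$-conjugate either to $H_{\theta_a}$ (the bounded case) or to $L=\{\mu_2\cdot\left( \begin{smallmatrix} 1 &x \\ 0 &1\end{smallmatrix} \right)\mid x\in F\}$ (the case $|m_n|\to\infty$), and both occur — the first trivially, the second via Step~2 with any single $a_i$. I expect the crux to be the ``$\subseteq$'' half of Step~2: forcing a surviving element $D_mN_mD_m^{-1}$ to come from a torus element $N_m$ extremely close to $\pm I$. This is exactly where $(\ast)$ — hence the fact that $a_i$ points into $\partial T_G-\partial T_H$ rather than along $\partial T_H$ — is indispensable (conjugating by an element whose axis shares an endpoint with $H$ would only return a conjugate of $H$), and it is also where the split case $a\in(F^{*})^{2}$ needs extra care, since $H_{\theta_a}$ is then non-compact and one must additionally rule out the torus parameter escaping to $0$ or $\infty$. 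The one delicate bookkeeping point is the factor $\mu_2$: it appears because $-I\in H_{\theta_a}$, equivalently because the torus has two local branches near $\pm I$.
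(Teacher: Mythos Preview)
Your strategy coincides with the paper's: reduce via the polar decomposition $G=\mathcal K\mathcal B H$ to limits of $a_i^{m}Ha_i^{-m}$ along a fixed hyperbolic one-parameter group, and then compute that limit. The execution differs in flavour: the paper fixes the concrete generator $\mathrm{diag}(\omega_F,\omega_F^{-1})$, writes out the entrywise conjugate of $\left(\begin{smallmatrix} x & y\\ ay & x\end{smallmatrix}\right)$, and invokes Hensel's lemma to show every off-diagonal value $z\in F$ is attained; you instead diagonalise a general $a_i$ and argue structurally via the transversality fact $(\ast)$ and a tangent-line argument for surjectivity. Both routes work, and yours explains more clearly \emph{why} the answer is $\mu_2$ times a unipotent. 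Three points to clean up. First, your scaling is reversed: conjugation by $D_m=\mathrm{diag}(\lambda^{m},\lambda^{-m})$ multiplies the $(1,2)$-entry by $\lambda^{2m}$ and the $(2,1)$-entry by $\lambda^{-2m}$, so with $v(\lambda)=-1$ and $m\to+\infty$ it is the $(1,2)$-entry that blows up; your sentence ``$\lambda^{-2m}$ times the $(1,2)$-entry \dots\ stays bounded, so $\beta(N_m)\to0$'' is the wrong way round, though your final upper-triangular conclusion happens to be correct. Second, the hypothesis that \emph{both} endpoints of $a_i$ avoid $\partial T_H$ is not delivered by Proposition~\ref{prop_main::polar_decom} (only the attracting one is), so in the split case $a\in(F^{*})^{2}$ you should say explicitly that $a_i$ can be chosen with this property. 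Third, your finiteness-of-$I_H$ remark (``squaring map'') covers only the split case; for the anisotropic case the paper uses a separate orbit count (Lemma~\ref{lem::nr_orbits_H_SL}), and your bound on the torus parameter in the split case is best justified by noting $\mathrm{tr}(N_m)=\mathrm{tr}(h_m)$ diverges if the parameter escapes.
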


%\emph{Outline:} The structure of the paper is as follows: Section \ref{sec:background} gives background material on involutions and symmetric subgroups.  In Section \ref{sec:autom} we classify all abstract and $E$-involutions on $\SL(2, E)$, where $E$ is a quadratic extension of $\QQ_p$.  We show they are given by composition with an inner involution and a field automorphism.  Then in Section \ref{sec:polar} we prove a polar decomposition for symmetric subgroups which will be useful in finding their Chabauty limits.  Section \ref{sec:chab_quad} computes Chabauty limits of $\SL(2, \QQ_p) \subset \SL (2, E)$, section \ref{sec:chab_sym} for general symmetric subgroups inside $\SL(2,\QQ_p)$ and Section \ref{sec:chab_real} for $\SL(2, \RR) \subset \SL(2, \CC)$. 

\subsection*{Acknowledgements} Ciobotaru was partially supported by the Institute of Mathematics of the Romanian Academy (IMAR), Bucharest, and The Mathematisches Forschungsinstitut Oberwolfach (MFO, Oberwolfach Research Institute for Mathematics). She would like to thank those two institutions for the perfect working conditions they provide. As well, Ciobotaru is supported by the European Union’s Horizon 2020 research and innovation program under the Marie Sklodowska-Curie grant agreement No 754513, The Aarhus University Research Foundation, and a research grant (VIL53023) from VILLUM FONDEN.  Leitner was supported by Afeka college of engineering.  We thank Uri Bader, Linus Kramer,  Thierry Stulemeijer, Maneesh Thakur, and Alain Valette for helpful discussions.  \editA{We also thank the referee for an incredibly thorough job of making many suggestions which greatly improved the clarity and readability of the paper.}

\section{ The Chabauty Topology}%%%%%%%%%%%%%%%%%%%%%%%%%%%%%
\label{sec::Chabauty}

%Thus, to compute limits of groups one needs a topology on the corresponding set of groups with respect to which such limits can make sense. More precisely, 
The \textit{Chabauty topology}  was introduced in 1950 by Claude Chabauty \cite{Ch} on the set of all closed subgroups of a locally compact group.  The initial motivation of Chabauty was to show that some sets of lattices of some locally compact groups are relatively compact, and to generalize a criterion of Mahler about lattices of $\RR^n$.  In the Chabauty topology  the set of all closed subgroups of a locally compact group is compact.  This implies that any sequence of closed subgroups admits a  convergent subsequence, and so it has at least one limit, called  a \textit{Chabauty limit}.  Therefore, limits of groups, and thus limits of geometries, are not empty notions, the real difficulty  is not proving their existence, but computing concretely which geometric types may be obtained as limits.   For a good introduction to Chabauty topology~\cite{Ch}  see~\cite{CoPau, Harpe, GJT} or~\cite[Section 2]{Haettel} and the references therein. We briefly recall some facts that are used in this paper.

\edit{For a locally compact topological space $X$, the set of all closed subsets of $X$ is denote by $\mathcal{F}(X)$. This is endowed with the Chabauty topology where every open set is a union of finite intersections of subsets of the form $O_K:=\{ F \in \mathcal{F}(X) \; \vert \; F \cap K =\emptyset\}$, where $K$ is a compact  subset of $X$, or $O'_U:=\{ F \in \mathcal{F}(X) \; \vert \; F \cap U \neq \emptyset\}$, where $U$ is an open subset of $X$. By \cite[Proposition~1.7, p.~58]{CoPau} the space $\mathcal{F}(X)$ is compact with respect to the Chabauty topology. Moreover, if $X$ is Hausdorff and second countable then $\mathcal{F}(X)$ is separable and metrizable, thus Hausdorff (see \cite[Proposition I.3.1.2]{CEM}).  Given a family $\mathcal{F}$ of closed subsets of $X$, it is natural to study the closure of $\mathcal{F}$ with respect to the Chabauty topology, $\overline{\mathcal{F}}$, and determine whether or not elements in the boundary  $\overline {\mathcal{F}} - \mathcal{F}$ satisfy the same properties as those in $\mathcal{F}$.  We call elements of $\overline{\mathcal{F}}$ the \textbf{Chabauty limits} of $\mathcal{F}$. The next proposition provides an equivalent (and easier) definition for the Chabauty topology on $\mathcal{F}(X)$ when $X$ is a locally compact metric space. }

\begin{proposition}(\cite[Proposition~1.8, p.~60]{CoPau}, \cite[Proposition I.3.1.3]{CEM})
\label{prop::chabauty_conv}
 Suppose $X$ is a locally compact metric space.
A sequence of closed subsets $\{F_n\}_{n \in \NN} \subset \mathcal{F}(X)$  converges to $F \in \mathcal{F}(X)$, with respect to the Chabauty topology on $\mathcal{F}(X)$, if and only if the following two conditions are satisfied:
\begin{itemize} 
\item[1)] For every $f \in F$ there is a sequence $\{f_n \in F_n\}_{n \in \NN}$ converging to $f$ \editA{with respect to the topology on $X$};
\item[2)] For every sequence $\{f_n \in F_n\}_{n \in \NN}$, if there is a strictly increasing subsequence $\{n_k\}_{k \in \NN}$ such that $\{f_{n_k} \in F_{n_k}\}_{k \in \NN}$ converges to $f$ \editA{with respect to the topology on $X$}, then $f \in F$.
\end{itemize}
\end{proposition}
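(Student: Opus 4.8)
The plan is to argue directly from the definition of the Chabauty topology, whose subbasic open sets are the $O_K=\{F : F\cap K=\emptyset\}$ (for $K$ compact) and the $O'_U=\{F : F\cap U\neq\emptyset\}$ (for $U$ open). Recall that $F_n\to F$ in any topological space means that $F_n$ eventually lies in every open neighbourhood of $F$, and that a neighbourhood base at $F$ consists of finite intersections of subbasic sets containing $F$. Hence in both directions it suffices to work one subbasic factor at a time: to verify convergence it is enough to show $F_n$ eventually enters each subbasic set containing $F$, and to deduce (1)--(2) it is enough to test convergence against individual subbasic neighbourhoods. Throughout I use that $X$ is metric (so I may use balls $B(f,r)$ and sequential compactness) and locally compact (so relatively compact open neighbourhoods exist).

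\emph{Necessity.} Assume $F_n\to F$. For condition (1), fix $f\in F$ and apply the neighbourhood $O'_{U}$ with $U=B(f,1/m)$; since $f\in F\cap U$ we have $F\in O'_U$, so $F_n\cap B(f,1/m)\neq\emptyset$ for all large $n$. A diagonal choice of $f_n\in F_n\cap B(f,1/m)$ produces a sequence with $f_n\to f$ (the finitely many $n$ with $F_n$ possibly empty lie before the tail and are irrelevant). For condition (2), suppose $f_{n_k}\in F_{n_k}$ with $f_{n_k}\to f$ but $f\notin F$. As $F$ is closed, $X\setminus F$ is an open neighbourhood of $f$, and by local compactness I can choose an open $V\ni f$ with $\overline V$ compact and $\overline V\subseteq X\setminus F$; setting $K=\overline V$ gives $F\cap K=\emptyset$, so $F\in O_K$ and therefore $F_n\cap K=\emptyset$ for all large $n$. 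But $f_{n_k}\to f\in V\subseteq K$ forces $f_{n_k}\in K$ for large $k$, i.e. $F_{n_k}\cap K\neq\emptyset$, a contradiction; hence $f\in F$.

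\emph{Sufficiency.} Assume (1) and (2), and check that $F_n$ eventually enters each subbasic neighbourhood of $F$. If $F\in O_K$ with $K$ compact and, for contradiction, $F_n\cap K\neq\emptyset$ for infinitely many $n$, pick $f_{n_k}\in F_{n_k}\cap K$; sequential compactness of the compact metric set $K$ yields a further subsequence converging to some $f\in K$, and condition (2) gives $f\in F$, contradicting $F\cap K=\emptyset$. Thus $F_n\in O_K$ eventually. If $F\in O'_U$ with $U$ open, pick $f\in F\cap U$; condition (1) supplies $f_n\in F_n$ with $f_n\to f$, so $f_n\in U$ for large $n$ and $F_n\cap U\neq\emptyset$, i.e. $F_n\in O'_U$ eventually. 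Intersecting over the finitely many subbasic factors of any basic neighbourhood of $F$ then shows $F_n$ is eventually in every neighbourhood of $F$, i.e. $F_n\to F$.

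The two places where the hypotheses on $X$ are genuinely needed, and hence where I expect the only real care is required, are: in necessity (2), the use of local compactness to manufacture a compact set $K$ that separates the would-be limit point $f$ from the closed set $F$; and in sufficiency, the extraction of a convergent subsequence from $K$, which relies on compact subsets of a metric space being sequentially compact. Everything else is formal neighbourhood bookkeeping, with the minor technical point that the sequences in (1) need only be defined on a tail, since $O'_U$-membership of $F$ forces $F_n$ to be eventually nonempty.
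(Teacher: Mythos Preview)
Your proof is correct. Note, however, that the paper does not supply its own proof of this proposition: it is stated with citations to \cite[Proposition~1.8, p.~60]{CoPau} and \cite[Proposition~I.3.1.3]{CEM} and used as a black box. Your argument is the standard one found in those references, working directly from the subbasic sets $O_K$ and $O'_U$ and using local compactness (to separate a point from a closed set by a compact neighbourhood) and the metric hypothesis (for sequential compactness of compact sets). The only cosmetic point worth flagging is the one you already note: condition~(1) as literally stated requires $f_n\in F_n$ for every $n$, which is impossible if some $F_n=\emptyset$; the usual convention, which you adopt, is that the sequence need only be defined on a tail, and this is harmless since $F\neq\emptyset$ forces $F_n\neq\emptyset$ eventually via any $O'_U$ with $U$ a ball about a point of $F$.
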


For a locally compact group $G$ we denote by $\mathcal{S}(G)$ the set of all closed subgroups of $G$. By \cite[Proposition~1.7, p.~58]{CoPau} the space $\mathcal{S}(G)$ is closed in $\mathcal{F}(G)$, with respect to the Chabauty topology, and is compact. Moreover, Proposition~\ref{prop::chabauty_conv}  applied to a sequence of closed subgroups $\{H_n\}_{n \in \NN} \subset \mathcal{S}(G)$ converging to $H \in \mathcal{S}(G)$,  yields a similar characterization of convergence in $\mathcal{S}(G)$.

Understanding the topology of the entire Chabauty space of closed subgroups of a group is difficult, and is known only for very few cases.  For example, it is easy to see \edit{that} $\mathcal{S}(\RR) \cong [ 0, \infty]$.  Hubbard and Pourezza \cite{HP} show $\mathcal{S} (\RR^2) \cong \mathbb{S}^4$, the 4-dimensional sphere, and Kloeckner \cite{Kloeckner} shows that while $\mathcal{S}(\RR^n)$ is not a manifold for $n >2$, it is a stratified space in the sense of Goresky--MacPherson, and is simply connected.  However a full description of $\mathcal{S}(\RR^n)$ is yet to be obtained. There are a few non-abelian
groups G for which $\mathcal{S}(G)$ is reasonably well understood, e.g. the Heisenberg group and some other low
dimensional examples \cite{BHK, Htt_2}, but for most $G$ the topology of $\mathcal{S}(G)$ is quite complicated.

Various authors have made progress understanding the closure of certain families of subgroups in $\mathcal{S}(G)$: abelian subgroups \cite{Baik1, Baik2, Haettel, Leitner_sl3, Leitner_sln}, connected subgroups \cite{LL}, and lattices \cite{BLL,Wang}.

In more recent years, $p$-adic Chabauty spaces have received attention. Bourquin and Valette \cite{BV} have described the homeomorphism type of $\mathcal{S}(\QQ_p ^*)$.   Cornulier \cite{Cornulier} has characterized several properties of $\mathcal{S}(G)$ for $G$ a locally compact abelian group. Chabauty closures of certain families of groups acting on trees have been studied by \cite{CR, Stulemeijer}, and there are several open questions about the Chabauty topology for locally compact groups in \cite{CM}.   The authors have studied limits of families of subgroups in $\SL(n, \QQ_p)$: parahoric subgroups \cite{CL} and Cartan subgroups \cite{CLV}.   Finally, \cite{GR} have studied compactifications of Bruhat-Tits buildings.

\edit{This article is the first stage in understanding a part of the $p$-adic Chabauty space for $\SL(n, \QQ_p)$ (a second article for $n \geq 3$ is forthcoming). We prove a $p$-adic analog of limits of groups preserving involutions, like \cite{CDW} do over $\RR$. }

\section{Background Material}\label{sec:background} %%%%%%%%%%%%%%%%%%%%%

Throughout this article we restrict to $p\neq 2$. Let $F$ be a finite field-extension of $\QQ_p$ and $E$ be any quadratic extension of $F$. Let $k_F, k_E$ be the residue fields of $F,E$, respectively,  and $\omega_F, \omega_E$ be  uniformizers of $F,E$, respectively. Recall $k_F^{*}/(k_F^{*})^{2} =\{1, S\}$, for some non-square $S \in k_F^{*}$. Then $F^{*}/(F^{*})^{2} =\{1, \omega_F, S, S\omega_F\}$ (\cite[Corollaries to Theorems 3 and 4]{Serre} or \cite[page 41, Section 12]{Sally}). \edit{We say} $E$ is \emph{ramified} then $E = F(\sqrt{\omega_F})$, or $E= F(\sqrt{S \omega_F})$ (where $\omega_F \neq \omega_E$), \edit{and we say} $E$ is \emph{unramified} \edit{if} $E=F(\sqrt{S})$ (where $\omega_F = \omega_E$).    We choose the unique valuation $\vert \cdot \vert_E$ on $E$  that extends the given valuation $\vert \cdot \vert_F$ on $F$. Choose $\alpha \in \{\sqrt{\omega_F}, \sqrt{S}, \sqrt{S\omega_F}\}$ and so $E=F(\alpha)$. Notice each element $x \in E$ can be uniquely written as $x = a+b \alpha$, with $a,b \in F$. For the ramified extensions we can consider $\omega_E^2 = \omega_F$. Let $\mathcal{O}_F:=\{ x \in F \; \vert \; \vert x \vert_F \leq 1 \}$ denote the ring of integers of $F$,  then $\mathcal{O}_F$ is compact and open in $F$. \editA{Moreover, one can choose $\omega_F \in \mathcal{O}_F, \omega_E \in \mathcal{O}_E$.} For $F=\QQ_p$ we have $\mathcal{O}_{\QQ_p}=\ZZ_p$, $\omega_{\QQ_p}=p$, $k_{\QQ_p}=\FF_p$, $\FF_p^{*}/(\FF_p^{*})^{2} =\{1, S_p\}$.

We denote by $T_F$ the Bruhat--Tits tree for $\SL(2,F)$ whose vertices  are equivalence classes of $\mathcal{O}_F$-lattices in $F^2$ (for its construction see \cite{Serre_tree}). The tree $T_F$ is a regular, infinite tree with valence $\vert k_F\vert +1$ at every vertex. The boundary at infinity $\partial T_{F}$ of $T_{F}$ is the projective space $P^1(F) \cong F \cup \{\infty\}$. Moreover, the endpoint $\infty \in \partial T_{F}$ corresponds to the vector  $\big[ \begin{smallmatrix}  0 \\1 \end{smallmatrix} \big] \in P^1(F)$. The rest of the endpoints $\xi \in \partial T_F$ correspond to the vectors $\big[ \begin{smallmatrix} 1 \\x \end{smallmatrix} \big] \in P^1(F) $, where $x \in F$. 

To give a concrete example,  the Bruhat--Tits tree of $\SL(2, \QQ_p)$ is the $p+1=\vert \FF_p \vert+1$-regular tree. The boundary at infinity $\partial T_{\QQ_p}$ of $T_{\QQ_p}$ is the projective space $P^1(\QQ_p)=\QQ_p \cup \{\infty\}$. In the figures below we give a concrete visualization of the Bruhat--Tits tree of $\SL(2, F)$ inside the  $\SL(2, E)$ \edit{tree}, when $F=\QQ_p$.
In both pictures we have drawn the Bruhat--Tits tree for $\SL(2, \QQ_p)$ (red) inside  the tree for $\SL(2,E)$ (blue and red).  We denote \edit{by} $x \in \QQ_p$, $y \in \QQ_p ^*$.

\editA{\begin{remark}  We denote quadratic field extensions in two ways: $F(\alpha)$ when we want to denote an arbitrary quadratic extension, or $F(\sqrt{a})$ when we wish to take $a$ to be a specific element for our computations. 
\end{remark}}

\begin{figure}
\includegraphics[width=12cm, height=6cm]{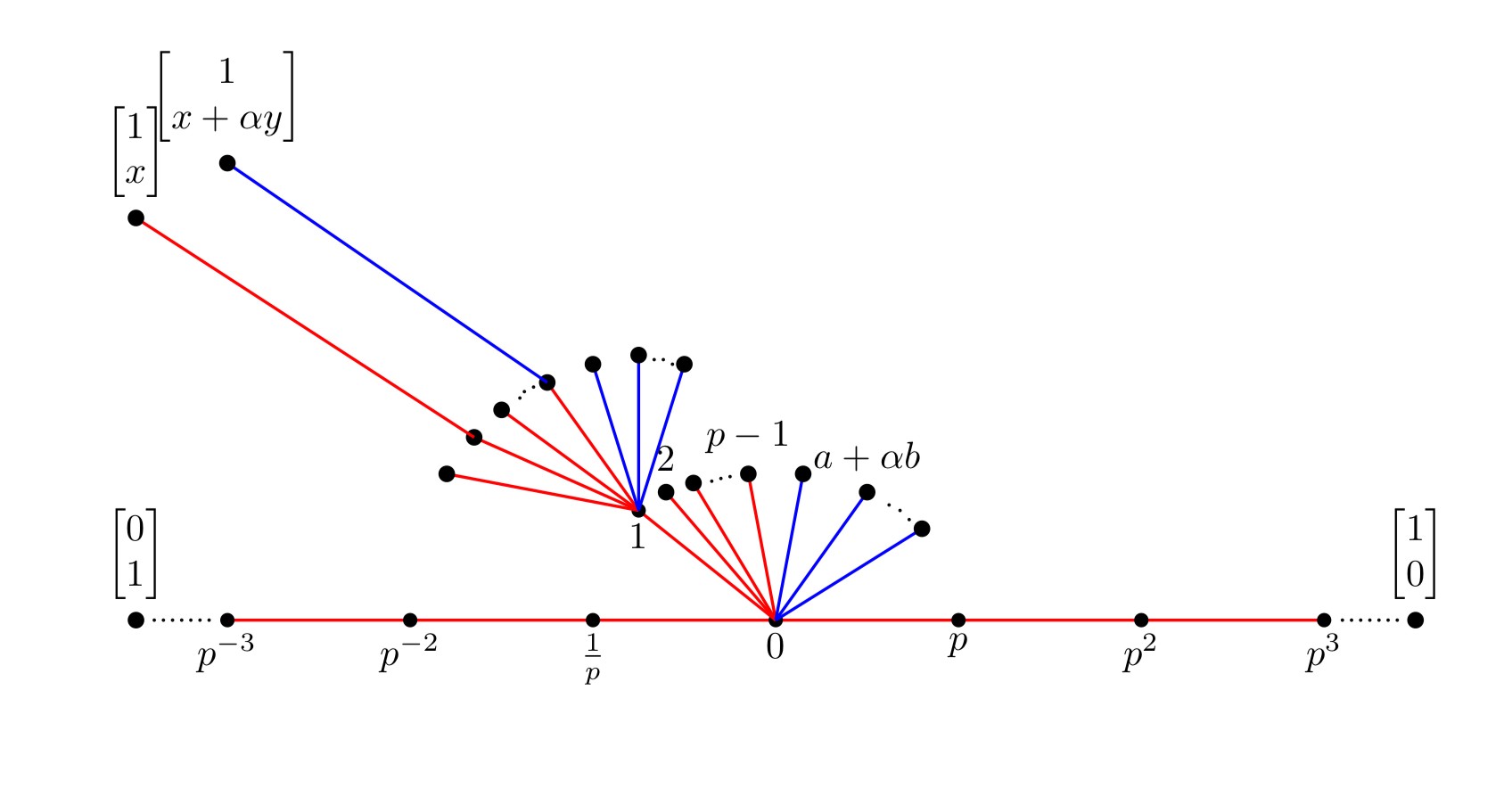}
\caption{Unramified quadratic extension: Let $E = \QQ_p (\alpha)$ where $\alpha^2 = S_p$ and $\alpha \not \in \QQ_p$. 
In the tree for $\SL(2, \QQ_p)$ every vertex has $p+1$ neighbors.  In the tree for $\SL(2,E)$ every vertex has $p^2 +1$ neighbors, obtained by  adding more edges (blue) to each vertex of the tree for $\SL(2,\QQ_p)$. \edit{We} denote $a \in \FF_p$ , $b \in \FF_p ^*$.
}
\label{fig::fig_unram}
\end{figure}

\begin{figure}
\includegraphics[width=12cm, height=6cm]{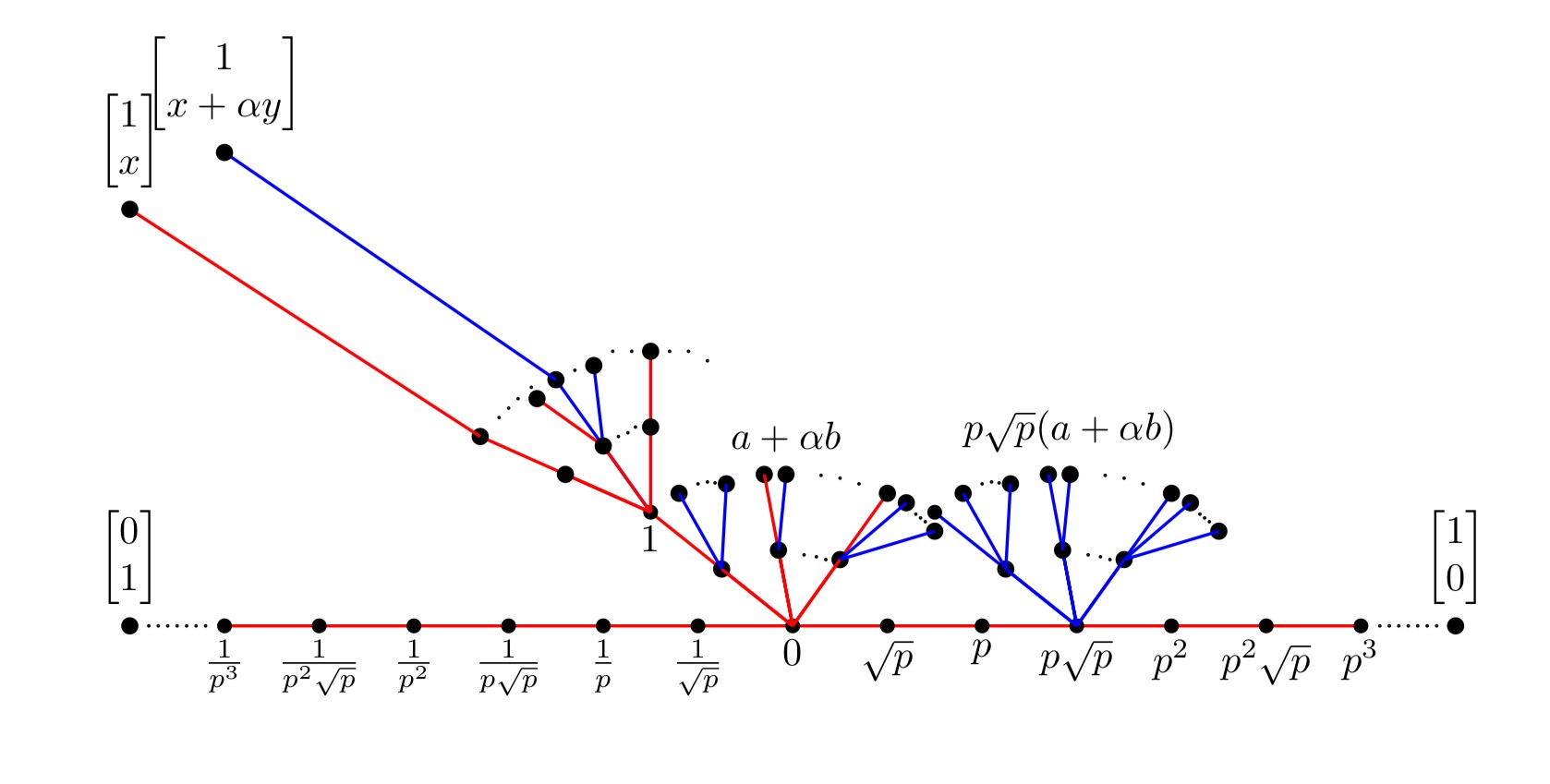}
\caption{Ramified quadratic extension: Let $E = \QQ_p (\alpha)$ where $\alpha^2 \in \{ p, pS_p\}$ and $\alpha \not \in \QQ_p$.  
In the tree for $\SL(2, \QQ_p)$ every vertex has $p+1$ neighbors.  In the tree for $\SL(2,E)$ every vertex also has $p +1$ neighbors, obtained by  adding $p+1$ blue edges from a vertex in the middle of each red edge. \edit{We} denote $a \in \FF_p^{*}$, $b \in \FF_p$.}
\label{fig::fig_ram}
\end{figure}

\medskip
 In the next few paragraphs we summarize results from \cite{HW_class} for $k$-involutions of $\SL(2,k)$ when $k$ is a field of characteristic not \edit{equal to} $2$. Let $\overline{k}$ be the algebraic closure of $k$.

Recall, a mapping $\phi: \SL(2,\overline{k}) \to \SL(2,\overline{k})$ is a \emph{$k$-automorphism} (or equivalently,  \emph{an automorphism defined over $k$}) if $\phi$ is a bijective rational $k$-homomorphism whose inverse is also a rational $k$-homomorphism,  \cite[Sec. 2.2]{Hel_k_invol}. \editA{When $k=\overline{k}$, a $k$-automorphism $\phi$ is called an \emph{algebraic automorphism}, or just an \emph{automorphism}. To distinguish the terminology, an \emph{abstract automorphism} of  $\SL(2,\overline{k})$ is a bi-continuous isomorphism of  $\SL(2,\overline{k})$ to itself, viewed as an abstract group.}

An abstract automorphism $\theta$ of $\SL(2,\overline{k})$ of order two is an \emph{abstract involution} of $\SL(2,\overline{k})$. A \emph{$k$-involution} $\theta$ of $\SL(2,\overline{k})$  is \editA{a $k$-automorphism  of $\SL(2,\overline{k})$ of order two}, and the restriction of $\theta$ to $\SL(2,k)$  is a \emph{$k$-involution} of $\SL(2,k)$. An  \emph{abstract involution} of $\SL(2,k)$ is an abstract automorphism of $\SL(2,k)$ of order two.  Given $g \in \SL(2,\overline{k})$ denote by $\iota_g$ the inner automorphism of $\SL(2,\overline{k})$  defined by $x \mapsto \iota_g(x):= gxg^{-1}$. 

The classification of the isomorphism classes of $k$-involutions of  a connected reductive algebraic group defined over $k$ is given in \cite{Hel_k_invol}. A simple characterization of the isomorphism classes of $k$-involutions of $\SL(n,k)$ is given in \cite{HelWD}.
We record the classification of $k$-involutions of $\SL(2,k)$:

 \begin{theorem}\label{class_invol}[\cite{HW_class} Theorem 1, Corollary 1, Corollary 2]. Every $k$-isomorphism class of $k$-involution of $\SL(2,k)$ is of the form $\iota_A$ with $A= \left( \begin{smallmatrix} 0 &1 \\ a &0 \end{smallmatrix} \right) \in \GL(2,k)$. Two such $k$-involutions $\iota_A$ with $ A\in \{ \left( \begin{smallmatrix} 0 &1 \\ a &0 \end{smallmatrix} \right),  \left( \begin{smallmatrix} 0 &1 \\ b &0 \end{smallmatrix} \right)\} \subset \GL(2,k)$ of $\SL(2,k)$ are conjugate if and only if $a$ and $b$ are in the same square class of $k^{*}$. In particular, there are $order(k^* / (k^ *)^2)$ $k$-isomorphism classes of $k$-involutions of $\SL(2,k)$. 
 \end{theorem}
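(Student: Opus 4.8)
The plan is to first reduce to inner automorphisms, then produce a normal form for the matrix, then read off the conjugacy invariant.

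\emph{Step 1: reduction to $\iota_A$ with $A\in\GL(2,k)$.} A $k$-involution is in particular a $k$-automorphism of the algebraic group $\SL_2$. Since $\SL_2$ is semisimple of type $A_1$, whose Dynkin diagram has no nontrivial symmetry, every automorphism of $\SL(2,\overline{k})$ is inner; combined with $H^1(k,\mathbb{G}_m)=0$ (Hilbert 90, giving $\PGL(2,k)=\GL(2,k)/k^{*}$), this identifies the group of $k$-automorphisms of $\SL_2$ with $\GL(2,k)$ modulo scalar matrices, each acting as an inner automorphism $\iota_A$. (This is also what \cite{BoTi,Stein} give when specialised to $\SL_2$.) Then $\iota_A$ is an involution precisely when $\iota_A^{2}=\iota_{A^{2}}=\Id$ while $\iota_A\neq\Id$, i.e. precisely when $A^{2}=\lambda I$ for some $\lambda\in k^{*}$ and $A\notin k^{*}I$.

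\emph{Step 2: normal form.} If $A$ is not scalar then its minimal polynomial has degree $2$, hence equals $t^{2}-\lambda$, which is therefore also its characteristic polynomial; in particular $\tr A=0$ and $\det A=-\lambda$. Choose $v\in k^{2}$ with $\{v,Av\}$ linearly independent — possible precisely because $A$ is not scalar — and pass to the basis $(v,Av)$. Since $A(Av)=A^{2}v=\lambda v$, the matrix of $A$ in this basis is $\left(\begin{smallmatrix}0&\lambda\\1&0\end{smallmatrix}\right)$, which is $\GL(2,k)$-conjugate to $\left(\begin{smallmatrix}0&1\\ \lambda&0\end{smallmatrix}\right)$. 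As conjugating $A$ by $C\in\GL(2,k)$ turns $\iota_A$ into $\iota_{CAC^{-1}}$, every $k$-involution is $k$-isomorphic to $\iota_A$ with $A=\left(\begin{smallmatrix}0&1\\a&0\end{smallmatrix}\right)$ and $a\in k^{*}$.

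\emph{Step 3: conjugacy and the count.} Let $A=\left(\begin{smallmatrix}0&1\\a&0\end{smallmatrix}\right)$ and $B=\left(\begin{smallmatrix}0&1\\b&0\end{smallmatrix}\right)$. An isomorphism $\iota_C$, $C\in\GL(2,k)$, carrying $\iota_A$ to $\iota_B$ amounts to $CAC^{-1}=\mu B$ for some $\mu\in k^{*}$, the scalar appearing because $\iota$ only sees matrices modulo the center. Comparing determinants gives $-a=-\mu^{2}b$, so it is necessary that $a$ and $b$ lie in the same square class of $k^{*}$. Conversely, if $a=\mu^{2}b$ then $A$ and $\mu B$ share the characteristic polynomial $t^{2}-a$, and since $\operatorname{char}k\neq 2$ this polynomial is either irreducible over $k$ or has two distinct roots in $k$; either way rational canonical form makes $A$ and $\mu B$ conjugate in $\GL(2,k)$, so $\iota_A$ and $\iota_B$ are $k$-isomorphic. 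Thus the square class of $a$ is a complete invariant, and the $k$-isomorphism classes of $k$-involutions of $\SL(2,k)$ are in bijection with $k^{*}/(k^{*})^{2}$.

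The only genuinely external input is Step 1 — understanding the $k$-rational automorphisms of $\SL_2$ over a field that need not be algebraically closed — which for $\SL_2$ is painless thanks to the absence of diagram automorphisms together with Hilbert 90, and which is also the precise point where one uses that $\theta$ is an honest $k$-involution rather than an abstract one (the abstract case being treated separately in Section \ref{sec:autom}). The remaining steps are elementary linear algebra over $k$, and $\operatorname{char}k\neq 2$ is used only in Step 3, to ensure that $t^{2}-a$ has separable roots.
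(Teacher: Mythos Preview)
The paper does not give its own proof of this statement: Theorem~\ref{class_invol} is recorded in the background section as a citation of \cite[Theorem~1, Corollaries~1 and~2]{HW_class}, and when the case $\gamma=\Id$ arises in the proof of Theorem~\ref{thm::inv_SL_E} the paper again simply invokes \cite[Section~1.3]{HW_class}. So there is nothing to compare against here except the original source.

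That said, your sketch is correct and is essentially the argument of \cite{HW_class}. Step~1 matches what the paper quotes as \cite[Remark~2]{HW_class} (every $k$-automorphism of $\SL(2,k)$ is the restriction of some $\iota_A$ with $A\in\GL(2,k)$), and your justification via the triviality of $\operatorname{Out}(\SL_2)$ together with Hilbert~90 is the standard one. The cyclic-vector normal form in Step~2 and the determinant/rational-canonical-form argument in Step~3 are exactly the elementary linear algebra that \cite{HW_class} uses. One cosmetic remark: in Step~2 you do not actually need the extra conjugation to pass from $\left(\begin{smallmatrix}0&\lambda\\1&0\end{smallmatrix}\right)$ to $\left(\begin{smallmatrix}0&1\\\lambda&0\end{smallmatrix}\right)$, since choosing the basis $(Av,v)$ instead of $(v,Av)$ gives the desired form directly.
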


\begin{definition} Given an involution $\theta$ of a group $G$ the \emph{fixed point group} of $\theta$ is $H_{\theta}: = \{ x \in G \; \vert \; \theta(x) = x \}$. 
\end{definition} 

For $\theta$ a $k$-involution of $\SL(2,k)$ the quotient $\SL(2,k)/H_{\theta}$ is called a \emph{$k$-symmetric variety}, and much of the structure of $\SL(2,k)/H_{\theta}$ is determined by $H_\theta$. 

\begin{proposition}[\cite{HW_class} Section 3]
\label{prop::prop_shape_inv}
 Let $\theta= \iota_A$, with $A= \left( \begin{smallmatrix} 0 &1 \\ a &0 \end{smallmatrix} \right)  \in \GL(2,k)$, be a $k$-involution of $\SL(2,k)$. Then $ H_{\theta}=\{  \left( \begin{smallmatrix} x &y \\ ay &x\end{smallmatrix} \right) \in \SL(2,k)\; \vert \; x^2 - ay^2 =1 \} $. 
\end{proposition}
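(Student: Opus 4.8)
The plan is to compute $H_\theta$ directly from the defining condition $\theta(g) = g$, that is, $AgA^{-1} = g$, for $g \in \SL(2,k)$. First I would write $A = \left( \begin{smallmatrix} 0 & 1 \\ a & 0 \end{smallmatrix} \right)$ and note $A^{-1} = \frac{1}{-a}\left( \begin{smallmatrix} 0 & -1 \\ -a & 0 \end{smallmatrix} \right) = \left( \begin{smallmatrix} 0 & 1/a \\ 1 & 0 \end{smallmatrix} \right)$ (using $a \in k^*$, which is forced since $A \in \GL(2,k)$). Then for a general $g = \left( \begin{smallmatrix} x & y \\ z & w \end{smallmatrix} \right) \in \SL(2,k)$ the equation $Ag = gA$ is a linear system in the entries; carrying out the two matrix products and comparing entries gives the relations $z = ay$ and $w = x$ (and the off-diagonal entry equations turn out to be equivalent to these). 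So $g = \left( \begin{smallmatrix} x & y \\ ay & x \end{smallmatrix} \right)$, and the determinant condition $\det g = 1$ becomes exactly $x^2 - ay^2 = 1$. Conversely every such matrix clearly lies in $\SL(2,k)$ and is fixed by $\iota_A$, so the two descriptions agree.

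A cleaner way to organize the same computation, which I would probably prefer to write up, is to observe that $\iota_A$ fixes $g$ iff $g$ commutes with $A$, i.e. $H_\theta = Z_{\SL(2,k)}(A)$, the centralizer of $A$ in $\SL(2,k)$. Since $A$ has distinct eigenvalues $\pm\sqrt{a}$ over $\overline{k}$ (as $a \neq 0$ and $\mathrm{char}\, k \neq 2$), its centralizer in $M_2(k)$ is the commutative $k$-algebra $k[A] = \{ uI + vA : u,v \in k \} = \left\{ \left( \begin{smallmatrix} u & v \\ av & u \end{smallmatrix} \right) : u,v \in k \right\}$. Intersecting with $\SL(2,k)$ imposes $\det(uI + vA) = u^2 - av^2 = 1$, which after renaming $u = x$, $v = y$ is precisely the claimed group. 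This makes the verification essentially a one-line determinant computation rather than a four-equation entry comparison.

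There is not really a hard step here; the only points requiring a word of care are: (i) justifying that $a \in k^*$ so that $A$ is invertible and the formula for $A^{-1}$ is valid — this is immediate from $A \in \GL(2,k)$; (ii) checking that the centralizer of $A$ in $M_2(k)$ is exactly $k[A]$ and no larger — this uses that $A$ is not a scalar matrix (again because $a \neq 0$), so its minimal polynomial $X^2 - a$ has degree $2$ equal to the size of the matrix, and a $2\times 2$ matrix with this property has centralizer equal to the polynomial algebra it generates; and (iii) noting $\mathrm{char}\, k \neq 2$ is only implicitly needed to ensure $\pm\sqrt a$ are distinct, but in fact the argument via minimal polynomials does not even require that. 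I expect the write-up to be short, with the "main obstacle" being merely to present the entry-wise verification compactly; using the centralizer viewpoint sidesteps it entirely.
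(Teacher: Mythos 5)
Your proposal is correct. The paper does not prove this proposition itself but imports it from \cite{HW_class}, where the argument is exactly the elementary one you give: $\iota_A(g)=g$ is equivalent to $g$ commuting with $A$, and the entry-wise comparison (or, as you note more cleanly, the observation that the centralizer of the non-scalar matrix $A$ in $M_2(k)$ is $k[A]=\{u\Id+vA\}$) yields $g=\left(\begin{smallmatrix} x & y \\ ay & x\end{smallmatrix}\right)$ with $\det g = x^2-ay^2=1$. Both of your write-ups are complete and the centralizer formulation is a nice compact way to package the same computation; there is no gap.
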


\editA{A quadratic form $q$ is \emph{isotropic} if there exists a vector $x$ such that $q(x)=0$.  Otherwise $q$ is \emph{anisotropic}. In the context of groups, a non-compact subgroup $H_{\theta}$ will be called \emph{isotropic} \editB{when $\theta$ is isotropic}, and a bounded (or compact) subgroup $H_{\theta}$ will be called \emph{anisotropic} \editB{ when $\theta$ is anisotropic}. %This will correspond to involutions $\theta$ coming from an/isotropic forms.
}

\begin{theorem} [\cite{HW_class} Section 3.2]
\label{thm::thm_anist_iso} Let $k=\QQ_p$, $\theta = \iota_A$ with $A= \left( \begin{smallmatrix} 0 &1 \\ a &0 \end{smallmatrix} \right)$ and $\overline{a} \in \QQ_{p}^{*} / (\QQ_p^*)^2$.  Then $H_\theta$ is anisotropic if and only if $\bar a \neq \bar 1$.  If $\bar a = \bar 1$, then $H_\theta$ is isotropic and conjugate to the maximal $\QQ_p$-split torus of $\SL(2,\QQ_p)$, i.e. the diagonal subgroup of $\SL(2,\QQ_p)$. 
\end{theorem}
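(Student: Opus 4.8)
Theorem \ref{thm::thm_anist_iso}: for $k=\QQ_p$, $\theta=\iota_A$ with $A=\left(\begin{smallmatrix}0&1\\a&0\end{smallmatrix}\right)$ and $\bar a\in\QQ_p^*/(\QQ_p^*)^2$, the group $H_\theta$ is anisotropic iff $\bar a\neq\bar 1$, and if $\bar a=\bar 1$ then $H_\theta$ is conjugate to the diagonal $\QQ_p$-split torus of $\SL(2,\QQ_p)$.

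**Proof proposal.**

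The plan is to work directly with the explicit description $H_\theta=\{\left(\begin{smallmatrix}x&y\\ay&x\end{smallmatrix}\right)\in\SL(2,\QQ_p)\mid x^2-ay^2=1\}$ supplied by Proposition \ref{prop::prop_shape_inv}. First I would handle the case $\bar a=\bar 1$: write $a=c^2$ with $c\in\QQ_p^*$, and conjugate $H_\theta$ by $D=\left(\begin{smallmatrix}1&0\\0&c\end{smallmatrix}\right)$ (or by $\left(\begin{smallmatrix}c&c\\-1&1\end{smallmatrix}\right)$, which diagonalizes $A$). A direct matrix computation shows $D^{-1}\left(\begin{smallmatrix}x&y\\ay&x\end{smallmatrix}\right)D$ has the companion-matrix/diagonalizable form whose eigenvalues are $x\pm cy$; since $(x+cy)(x-cy)=x^2-ay^2=1$, conjugating by the eigenvector matrix of $A$ sends $H_\theta$ onto $\{\left(\begin{smallmatrix}\lambda&0\\0&\lambda^{-1}\end{smallmatrix}\right)\mid\lambda\in\QQ_p^*\}$, the diagonal torus, which is $\QQ_p$-split; in particular $H_\theta$ is isotropic.

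For the converse direction, suppose $\bar a\neq\bar 1$. I would argue that $H_\theta$ contains no nontrivial $\QQ_p$-split torus, equivalently no element $g\neq\pm\Id$ with an eigenvalue in $\QQ_p^*$ other than $\pm1$ giving an unbounded cyclic subgroup — more cleanly, that $H_\theta$ is compact (anisotropic tori over $\QQ_p$ are exactly the compact ones in this rank-one setting). The characteristic polynomial of $g=\left(\begin{smallmatrix}x&y\\ay&x\end{smallmatrix}\right)$ is $t^2-2xt+1$, so its eigenvalues are $x\pm\sqrt{x^2-1}=x\pm\sqrt{ay^2}=x\pm y\sqrt a$. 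When $a$ is not a square in $\QQ_p$, these eigenvalues lie in the quadratic extension $\QQ_p(\sqrt a)$ but not in $\QQ_p$ unless $y=0$ (forcing $x=\pm1$, i.e. $g=\pm\Id$). Thus $H_\theta$ consists of semisimple elements each generating a compact torus inside $\SL(2,\QQ_p(\sqrt a))$; identifying $H_\theta$ with the norm-one subgroup $\{z\in\QQ_p(\sqrt a)\mid N_{\QQ_p(\sqrt a)/\QQ_p}(z)=1\}$ via $x+y\sqrt a\mapsto\left(\begin{smallmatrix}x&y\\ay&x\end{smallmatrix}\right)$, the norm-one elements of the ring of integers form a compact group, and every norm-one element has valuation zero, hence lies in the ring of integers — so $H_\theta$ is compact, and a compact torus over a $p$-adic field is anisotropic. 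Alternatively, to stay purely algebraic, I would invoke that a $\QQ_p$-torus is isotropic iff it contains a copy of $\mathbb{G}_m$ defined over $\QQ_p$, and show the only $\QQ_p$-rational cocharacters of this norm-one torus are trivial, which is immediate from Galois descent on $\QQ_p(\sqrt a)$.

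The main obstacle, such as it is, is bookkeeping rather than depth: one must be careful that "anisotropic'' is being used in the sense of "contains no nontrivial split subtorus'' and match that with compactness over the local field $\QQ_p$, and one must correctly track the square class of $a$ through the conjugation in the split case (the conjugating matrix must itself lie in $\GL(2,\QQ_p)$, which is exactly why $a$ being a square in $\QQ_p^*$ — not merely in an extension — is the right hypothesis). Everything else reduces to the displayed quadratic $x^2-ay^2=1$ and the factorization $t^2-2xt+1=(t-\lambda)(t-\lambda^{-1})$ with $\lambda=x+y\sqrt a$.
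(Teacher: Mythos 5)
Your argument is correct, but it takes a different route from the paper. The paper itself does not prove this statement: it imports it from \cite{HW_class}, and its own re-derivation of the dichotomy is the geometric one in Corollary \ref{lem::geom_k_inv_SL_2}, where $\theta_a$ is shown to fix exactly the two ends $\big[\begin{smallmatrix}1\\ \pm\sqrt a\end{smallmatrix}\big]$ of the Bruhat--Tits tree of $\SL(2,K_a)$, $K_a=\QQ_p(\sqrt a)$, so that $H_{\theta_a}=\Fix_{\SL(2,K_a)}(\{\xi_-,\xi_+\})\cap\SL(2,\QQ_p)=C\,Diag(2,K_a)\,C^{-1}\cap\SL(2,\QQ_p)$ with $C=\left(\begin{smallmatrix}1&-1/\sqrt a\\ \sqrt a&1\end{smallmatrix}\right)$; the square/non-square dichotomy then reads off whether the fixed geodesic lies in $T_{\QQ_p}$ (noncompact, conjugate to the diagonal) or not (compact). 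You instead work purely algebraically: in the split case you diagonalize $xI+yA$ over $\QQ_p$ (which is exactly the paper's $C$-conjugation, but performed over the base field), and in the non-split case you identify $H_\theta$ with the norm-one torus of $\QQ_p(\sqrt a)$, deduce compactness from $|z\sigma(z)|=|z|^2=1$, and conclude anisotropy either from compactness or from the Galois action on cocharacters. This is more elementary and self-contained (it fills in what the paper leaves as ``easy verification''), while the paper's tree-theoretic formulation is what gets reused later for orbit counts on the boundary and the polar decomposition. Two minor points to tidy: your parenthetical matrix $\left(\begin{smallmatrix}c&c\\-1&1\end{smallmatrix}\right)$ diagonalizes $A$ only when $a=c^{-2}$ (for $a=c^2$ use $\left(\begin{smallmatrix}1&1\\ c&-c\end{smallmatrix}\right)$, or note that your two-step conjugation via $D=\mathrm{diag}(1,c)$ already does the job), and surjectivity onto the full diagonal torus should be recorded via $x=(\lambda+\lambda^{-1})/2$, $y=(\lambda-\lambda^{-1})/(2c)$, which uses $p\neq 2$; also the conjugation lands in $\GL(2,\QQ_p)$ rather than $\SL(2,\QQ_p)$, matching the paper's ``$\GL(2,F)$-conjugate'' phrasing.
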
 

\begin{remark}
\label{rem::inv_trans}
In the case of $2\times 2$ matrices the operation inverse composed with transpose is given by an inner automorphism:  
\[ \left(\left( \begin{matrix} a &b \\ c &d \end{matrix} \right)^{T}\right)^{-1}= \left( \begin{matrix} a &c \\ b &d \end{matrix} \right)^{-1}= \left( \begin{matrix} d &-c \\ -b &a \end{matrix} \right)  = \left( \begin{matrix} 0& 1\\ -1 &0 \end{matrix} \right) \left( \begin{matrix} a &b \\ c &d \end{matrix} \right) \left( \begin{matrix} 0& -1\\ 1 &0 \end{matrix} \right) .\]
This is not true for matrices of higher rank.
\end{remark}

Finally, we link the $\QQ_p$-involutions of $\SL(2,\QQ_p)$ given by Theorem \ref{class_invol} with quadratic forms over $\QQ_p$.
\begin{remark}
\label{rem::quadratic_forms} 
By \cite[Corollary of Section 2.3]{Serre}, for $p \neq 2$, there are exactly 7 classes of quadratic forms of rank $2$ over $\QQ_p$. We apply Remark \ref{rem::inv_trans}. As $\left( \begin{smallmatrix} 1& 0\\ 0 &-a \end{smallmatrix} \right)\left( \begin{smallmatrix} 0& 1\\ -1 &0 \end{smallmatrix} \right)= \left( \begin{smallmatrix} 0& 1\\ a &0 \end{smallmatrix} \right)=:A\in \GL(2,\QQ_p)$,  an involution $\theta=\iota_A$ of $\SL(2,\QQ_p)$ is determined by the quadratic form \edit{associated to the symmetric matrix $A$} %$\left( \begin{matrix} 1& 0\\ 0 &-a \end{matrix} \right)$:
\[\iota_A(g)=AgA^{-1	}= \left( \begin{matrix} 1& 0\\ 0 &-a \end{matrix} \right)(g^{-1})^{T} \left( \begin{matrix} 1& 0\\ 0 &-a \end{matrix} \right)^{-1}\]
for every $g \in \SL(2,\QQ_p)$.

For $\SL(n,\QQ_p)$, with $n\geq 3$, the number of $\QQ_p$-involutions is strictly larger than the number of the quadratic forms of rank $n$ over $\QQ_p$, the latter is given by  \emph{outer} $\QQ_p$-involutions (see \cite[Section 4.1.7, Section 6]{HelWD}). 
\end{remark}

 %Benoist and Oh give a $KAH$ decomposition where $K$ and $A$ are not necessarily subgroups. 

%Let $k$ be a non-Archimedean local field of characteristic not 2. Let $G$ be a connected reductive $k$-group, $\theta$ a $k$-involution of $G$ and $H$ an open $k$-subgroup of the group $G^\theta$ of $\theta$-fixed points.  Since there are only finitely many classes of $(k, \theta)$ split tori, we may choose a set of representatives $\{ A_i | 1 \leq i \leq n \}$ of $H$- conjugacy classes of maximal $(k, \theta)$ split tori of $G$, and set $A = \cup_{i=1}^n A_i$. 

%\begin{theorem}\label{BenOh}[\cite{BenoistOh} Theorem 1.1] There exists a compact subset $K \subset G$ so that $G= KAH$. 
%\end{theorem} 

%Notice that in general $K$ cannot be chosen to be a subgroup, and the union of the tori $A$ is also not a subgroup.  However, this theorem \emph{reduces our problem to only computing limits where we conjugate by elements in $A$.} Thus we may use it similarly to the reduction made using a $KAH$ decomposition for a real group, which allows us to only compute limits of $H$ by conjugating by elements of $A$. 

\section{Automorphisms and abstract automorphisms of $\SL(2,\QQ_p(\alpha))$}\label{sec:autom}%%%%%%%%%%%%%

Let $k$ be a local field of characteristic not \edit{equal to} $2$ and denote by $\overline{k}$ the algebraic closure of $k$. The group of $k$-automorphisms of $\SL(2,\overline{k})$ is denoted by $\Aut_{k}(\SL(2,\overline{k}))$. If $k=\overline{k}$ then we just write  $\Aut(\SL(2,\overline{k}))$, see \cite[Sec. 2.2]{Hel_k_invol}, \editA{and those are called \emph{algebraic automorphisms}, or just \emph{automorphisms}, of $\SL(2,\overline{k})$.}

Denote the group of inner automorphisms of $ \SL(2,\overline{k})$ by $\Inn( \SL(2,\overline{k}))$  and the group of inner $k$-automorphisms of $ \SL(2,\overline{k})$ by $\Inn_{k}(\SL(2,\overline{k}))$. Then $\Inn_{k}( \SL(2,\overline{k}))= \Inn( \SL(2,\overline{k})) \cap\Aut_{k}(\SL(2,\overline{k}))$,  see \cite[Sec. 2.2]{Hel_k_invol}. %Note that $\Inn_k (\SL(2,\overline{k})) \supsetneq \Inn( \SL(2,k)):=\{\iota_g \;\vert\; g \in  \SL(2,k))\}$, as $\iota_g \in \Inn_k (\SL(2,\overline{k})) \setminus \Inn( \SL(2,k))$, for every $g \in \GL(2,k)$ {\red ???}.

By Borel \cite{Bo} we have $\Aut(\SL(2,\overline{k}))=\Inn( \SL(2,\overline{k}))$.

We denote by $\Aut_{\abstrc}(\SL(2,\overline{k}))$ the group of all abstract automorphisms of  $\SL(2,\overline{k})$ \editA{(see the terminology introduced just above Theorem \ref{class_invol})},  and by $\Aut(k)$ the group of all bi-continuous field automorphisms of $k$. 

\medskip
Abstract (bi-continuous) automorphisms of the $k$-rational points $G(k)$ of an absolutely almost simple algebraic group $G$ defined over an infinite field $k$ were described by Borel--Tits \cite{BoTi}, and also by Steinberg \cite{Stein}. For example, the group $\SL(n,k)$ is  an absolutely almost simple, simply connected group, and \edit{splits} over $k$.  

By those \edit{results}, for $G$ as mentioned above, we have that $\Aut_{\abstrc}(G(k))$ fits in the exact sequence 
 \[1 \to \Aut_{k}(G) \to \Aut_{\abstrc}(G(k)) \to \Aut(k).\] 
Let $\Aut_G(k)$ be the image of $\Aut_{\abstrc}(G(k))$ in $\Aut(k)$.  When $G$ is a $k$-split connected reductive group, then $\Aut(k)= \Aut_G(k)$. And when $G$ is $k$-split, $\Aut_{\abstrc}(G(k))$ splits as the semi-direct product $\Aut_{k}(G) \rtimes  \Aut_G(k)$, (see \cite[Section 9.1]{BCL}, \cite[Introduction]{Stu}). 

\medskip
Thus, for the particular case of $\SL(2,k)$ we have that $\Aut_{\abstrc}(\SL(2,k))= \Aut_{k}(\SL(2,\overline{k})) \rtimes  \Aut(k)$. In \cite{Hel_k_invol, HW_class} only $k$-involutions of $\SL(2,k)$ are studied, i.e. involutions in $\Aut_{k}(\SL(2,\overline{k}))$. 

In order to obtain all abstract involutions of $\SL(2,k)$, thus involutions in $\Aut_{\abstrc}(\SL(2,k))$, it remains to compute $\Aut(k)$ and then to combine  with the $k$-automorphisms. 

\medskip
For $k=\QQ_p$, by \cite{Con} we have $\Aut(\QQ_p)=\{\Id\}$, and so for $\SL(2,\QQ_p)$ we have only $\QQ_p$-involutions and those are computed by the results in \cite{HW_class} recalled in Section \ref{sec:background} above. 

\medskip
For $k=E=\QQ_p(\alpha)$ a quadratic extension of $\QQ_p$,  to compute all the abstract involutions of $\SL(2,E)$,  we compute $\Aut(E)$ and  then combine \edit{it} with  $\Aut_E(\SL(2,\overline{\QQ_p}))$ in order to obtain all abstract involutions of $\SL(2,E)$. 

\begin{definition}
\label{def::conj}
Let $k$ be a local field and $K$ a finite field-extension of $k$. Let $\theta, \Psi \in \Aut_{\abstrc}(\SL(2,k))$. We say that $\theta$ and $\Psi$ are \emph{$\GL(2,K)$-conjugate} if there is $X \in \Inn(\GL(2,K))$,  such that $X^{-1} \theta X =\Psi$. In particular, there is a matrix $A \in \GL(2,K)$, such that $X = \iota_A$, and this means $A^{-1}(\theta(A g A^{-1}))A = \Psi(g)$, for every $g \in \SL(2,k)$.
\end{definition}

%\blue{
%\begin{definition}
%\label{def::conj}
%Let $k$ be a local field. Let $\theta, \Psi \in \Aut_{abstrc}(\SL(2,k))$. We say that $\theta$ and $\Psi$ are $\SL(2,k)$-conjugated, if there is $X \in \Inn(\SL(2,k))$,  such that $X^{-1} \theta X =\Psi$. In particular, there is a matrix $A \in \SL(2,k)$, such that $X = \iota_A$, and this means $A^{-1}(\theta(A g A^{-1}))A = \Psi(g)$, for every $g \in \SL(2,k)$. A similar definition can be recorded for \emph{$\GL(2,k)$-conjugacy.} 
%\end{definition}
%} 

\begin{lemma}  Let $E$ be a quadratic extension of $\QQ_p$, so $E=\QQ_p (\alpha)$, where $\alpha \in E\setminus \{\QQ_p\} $ and $\alpha ^2 \in \QQ_p$.  Then $\Aut(E)=\{\Id,\sigma\}$ where $\sigma( a + \alpha b) = a - \alpha b$. 
\end{lemma}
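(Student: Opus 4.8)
The plan is to show that any field automorphism $\tau$ of $E = \QQ_p(\alpha)$ is continuous, fixes $\QQ_p$ pointwise, and hence is determined by its effect on $\alpha$, which leaves only the identity and $\sigma$.

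First I would observe that a field automorphism of $E$ must fix the prime field $\QQ \subseteq E$ pointwise, and hence fixes $\ZZ$, $\QQ$, and by continuity $\QQ_p$. The continuity is the point that needs justification: one way is to recall that $E$ is a finite extension of $\QQ_p$, so $E$ is a locally compact non-discrete field of characteristic zero, and the valuation topology on $E$ is intrinsic. Concretely, $x \in E$ lies in the maximal ideal of $\mathcal{O}_E$ (i.e. $|x|_E < 1$) if and only if the sequence $x^n \to 0$, equivalently $1 + x + x^2 + \cdots$ converges, equivalently $1-x$ is invertible with inverse $\sum_{n\ge 0} x^n$ — a purely algebraic/topological characterization that is automatically preserved by any field automorphism once we know what ``converges'' means; alternatively, and more cleanly, any field embedding of a finite extension of $\QQ_p$ into itself is automatically continuous because the $p$-adic absolute value is the unique absolute value on $E$ extending the one on $\QQ_p$ (up to equivalence), so $\tau$ must preserve it. This gives $\tau|_{\QQ_p} = \Id$, since $\QQ_p$ is the closure of $\QQ$ and $\tau$ fixes $\QQ$ and is continuous.

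Next, since $\tau$ fixes $\QQ_p$ and $E = \QQ_p \oplus \QQ_p\alpha$ as a $\QQ_p$-vector space, $\tau$ is $\QQ_p$-linear and determined by $\tau(\alpha)$. Applying $\tau$ to the relation $\alpha^2 = c$ with $c = \alpha^2 \in \QQ_p$ gives $\tau(\alpha)^2 = c$, so $\tau(\alpha)$ is a root of $X^2 - c$ in $E$. That polynomial has exactly the two roots $\pm\alpha$ (it is separable since $p \neq 2$, and $E/\QQ_p$ is degree $2$ so $c$ has at most these two square roots in $E$). Thus $\tau(\alpha) = \alpha$, giving $\tau = \Id$, or $\tau(\alpha) = -\alpha$, giving $\tau(a + \alpha b) = a - \alpha b = \sigma(a + \alpha b)$. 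Conversely, $\sigma$ is evidently a field automorphism: it is $\QQ_p$-linear, and multiplicativity follows from $\sigma(\alpha)^2 = \alpha^2$, i.e. $\sigma$ is the nontrivial element of $\mathrm{Gal}(E/\QQ_p)$ since $E/\QQ_p$ is Galois (a degree-$2$, hence normal and, as $p\neq 2$, separable extension). Therefore $\Aut(E) = \{\Id, \sigma\}$.

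The main obstacle is the continuity/rigidity step: one must rule out wild, non-continuous field automorphisms of $E$ as an abstract field. For $\QQ_p$ itself this is classical (cited here as \cite{Con}); for the finite extension $E$ the cleanest argument is that $E$ is complete with respect to the unique extension of $|\cdot|_p$, any abstract automorphism of $E$ restricts to an abstract automorphism of $\QQ_p$ (it preserves the subfield of elements algebraic of degree dividing... — more simply, it preserves $\QQ_p$ because $\QQ_p$ is characterized inside $E$ as the set of elements fixed by no proper condition — but in fact the slickest route is: $\tau$ permutes the valuations of $E$, of which there is only one, so $\tau$ is an isometry, hence continuous, hence fixes $\overline{\QQ} \cap$-closure $= \QQ_p$), and then the finite-dimensional linear-algebra argument above finishes. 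I would present the valuation-uniqueness argument as the core, since it handles continuity in one stroke and the rest is elementary.
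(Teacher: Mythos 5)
Your endgame is fine and agrees with the paper: once every abstract automorphism $\tau$ of $E$ is known to be continuous, density of $\QQ$ in $\QQ_p$ forces $\tau\vert_{\QQ_p}=\Id$, and then $\tau(\alpha)^2=\alpha^2\in\QQ_p$ forces $\tau(\alpha)=\pm\alpha$, while $\sigma$ is visibly the nontrivial Galois automorphism. The genuine gap is exactly in the step you yourself flag as the main obstacle: none of the three arguments you offer for continuity actually works. (i) The appeal to ``the unique absolute value on $E$ extending $\vert\cdot\vert_p$'' is circular: to regard $x\mapsto\vert\tau(x)\vert_E$ as an extension of $\vert\cdot\vert_p$ you must already know $\vert\tau(a)\vert_E=\vert a\vert_p$ for all $a\in\QQ_p$, not merely for $a\in\QQ$; equivalently you need $\tau$ to preserve $\QQ_p$ and restrict there to the identity, which is what is being proved (and the a priori invariance of the subfield $\QQ_p$ under an abstract automorphism is itself unproved -- compare wild automorphisms of $\CC$ moving $\RR$ -- so \cite{Con} cannot be invoked directly). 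Uniqueness of extensions applies to the complete base $\QQ_p$, not to $\QQ$, over which $\vert\cdot\vert_p$ has many extensions to $E$. (ii) The proposed ``algebraic'' characterization of $\vert x\vert_E<1$ via ``$1-x$ is invertible with inverse $\sum_{n\ge 0}x^n$'' is not algebraic: $1-x$ is invertible for every $x\neq 1$ in a field, and the series description presupposes the very topology whose invariance is at stake. (iii) The ``slickest route'' -- $\tau$ permutes the valuations of $E$, of which there is only one -- is false as stated: the abstract field $E$ carries many pairwise inequivalent absolute values (for instance archimedean ones pulled back along embeddings $E\hookrightarrow\CC$). What is true is that there is only one equivalence class of nontrivial absolute value with respect to which $E$ is \emph{complete}, and that is precisely the theorem of F.~K.~Schmidt that the paper cites (\cite{Schm}: a field complete with respect to two inequivalent nontrivial norms is algebraically closed); you neither prove nor cite any such rigidity statement, and the remark that $\QQ_p$ ``is characterized inside $E$ as the set of elements fixed by no proper condition'' is not an argument.

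For comparison, the paper closes this step as follows: if $n$ denotes the given norm on $E$, then $n_\tau:=n\circ\tau$ is again a nontrivial norm with respect to which $E$ is complete (since $\tau$ is a field isomorphism of $(E,n_\tau)$ onto $(E,n)$); because $E$ is not algebraically closed, Schmidt's theorem forces $n_\tau$ to be equivalent to $n$, so $\tau$ is continuous, and then the density of $\QQ$ in $\QQ_p$ gives $\tau\vert_{\QQ_p}=\Id$ and the rest proceeds as in your last paragraph. To repair your proposal you should either quote \cite{Schm} (or an equivalent automatic-continuity theorem for abstract automorphisms of $p$-adic fields that covers the finite extension $E$, not only $\QQ_p$), or else carry out an honest automorphism-invariant algebraic description of the valuation ring $\mathcal{O}_E$ in the spirit of \cite{Con} (for $p\neq 2$ one can, for example, detect integrality through squares, using that units congruent to $1$ modulo the maximal ideal are squares by Hensel's lemma); as written, the continuity claim -- which is the entire content of the lemma -- is asserted rather than proved.
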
 

\begin{proof} 
From \cite{Schm} we know that a field which is complete with respect to two inequivalent nontrivial norms (i.e., the two norms induce distinct non-discrete topologies) must be algebraically closed.  A corollary is that a field which is complete with respect to a nontrivial norm and which is not algebraically closed has only one equivalence class of norm, $n$.  So  an automorphism $\sigma$ of $E$  induces a norm $n_\sigma (x) := n( \sigma(x))$.  But then $n_\sigma$ must be equivalent to the unique norm $n$, so $n_\sigma$ is some scalar multiple of $n$. Thus every automorphism of $E$ is continuous \edit{on $E$} with respect to the norm topology. %If we apply this to our quadratic extension $E=\QQ_p(\alpha)$ of $\QQ_p$ we get that every automorphism of $E$ is continuous. 
Since $\QQ$ is dense in $\QQ_p$ and because any automorphism of $E$ is continuous and the identity on $\QQ$, one can deduce that any \edit{automorphism} on $E$ is trivial on $\QQ_p$, thus Galois. Now as $\alpha^2 \in \QQ_p$, we have that any automorphism of $E$ will send $\alpha$ to $\pm\alpha$. Therefore $\Aut(E)=\{\Id,\sigma\}$, where  $\sigma: \QQ_p(\alpha) \to \QQ_p(\alpha)$, with $x \in \QQ_p \mapsto \sigma(x)=x$, and $\alpha \mapsto \sigma(\alpha)=-\alpha$.
\end{proof} 

\begin{remark}
\label{rem::conj_inv_auto} Let $k$ be a local field. 
By the results of Borel--Tits \cite{BoTi} that are recalled in \cite[Theorem 9.1 v)]{BCL} we know that $\Aut_{\abstrc}(\SL(2,k))$ acts continuously, properly and faithfully on the Bruhat--Tits tree of $\SL(2,k)$. In the particular case when $k=E$ is a quadratic extension of $\QQ_p$, the involution $\sigma$ is an automorphism of the Bruhat--Tits tree of $\SL(2,E)$, as well as any abstract involution $\theta$ of $\SL(2,E)$.
\end{remark}

Let us now compute the abstract involutions of $\SL(2,E)$.  \editA{If $A \in \SL(2,E)$ is a matrix and $\gamma \in \Aut(E)$ is a field automorphism, then $\gamma(A)$ is the matrix where $\gamma$ is applied to every matrix entry. }

\begin{theorem}
\label{thm::inv_SL_E}
Let  $E=\QQ_p(\alpha)$ be a quadratic extension of $\QQ_p$. Then any abstract involution $\theta$ of $\SL(2,E)$ is of the form $ \theta = \iota_A \circ \gamma$ where: 
\begin{enumerate}
\item
 \edit{either} $\gamma= \sigma$  and $A \in \edit{ \left \{\left( \begin{smallmatrix} z &y \\ 1 & -\sigma(z) \end{smallmatrix} \right),  \left( \begin{smallmatrix} x &0 \\ 0 & 1 \end{smallmatrix} \right)\right \}} \subset \GL(2,E)$, with $y \in \QQ_p, z,x\in E$, with $z\sigma(z)+ y \neq 0$ and $x\sigma(x)=1,$
\item
 \edit{or} $\gamma=\Id$ and \edit{ $A$ is $\SL(2,E)$-conjugate to a matrix of the form} $\left( \begin{smallmatrix} 0 &1 \\ a &0 \end{smallmatrix} \right)$, with $a\in E^{*}/(E^{*})^{2}$.
\end{enumerate}
\end{theorem}

\begin{proof} 
By Borel--Tits \cite{BoTi} any abstract automorphism of $\SL(2,E)$  is written as $\beta \circ \gamma$, with $\beta \in \Aut_E(\SL(2,\overline{\QQ_p}))$ and $\gamma \in \Aut(E)=\{\Id,\sigma\}$.  By  \cite[Remark 2]{HW_class} every $E$-automorphism of $\SL(2,E)$ can be written as the restriction to $\SL(2,E)$ of a inner automorphism of $\GL(2,E)$. Thus, given $\theta$ an abstract automorphism of $\SL(2,E)$ there exists $A =  \left( \begin{smallmatrix} a &b \\ c &d \end{smallmatrix} \right) \in \GL(2,E)$ such that $\theta = \iota_A \circ \gamma$. From now on assume that $\theta$ is an abstract involution, thus $\theta^2 = \Id$. Then, by \cite[Lemma 2]{HW_class}, $A\gamma(A) = \left( \begin{smallmatrix} a\gamma(a) + b\gamma(c)&a\gamma(b)+b\gamma(d) \\ c\gamma(a)+d\gamma(c) & c\gamma(b)+d\gamma(d) \end{smallmatrix} \right) = q\Id$, for some $q \in E^*$, and with $\gamma \in \{\Id, \sigma\}$.  Thus we have 
\begin{equation}
\label{equ::rel_A}
a \gamma(a) + b\gamma(c) = c \gamma (b) + d\gamma(d) \qquad \text{and}  \qquad c\gamma(a) + d \gamma(c) = 0 = a \gamma(b) + b \gamma(d).
\end{equation}

When $\gamma = \Id$ we can directly apply the results of \cite[Section 1.3]{HW_class} and get that up to $\SL(2,E)$-conjugacy,  $\theta = \iota_A$, with  $A= \left( \begin{smallmatrix} 0 &1 \\ t &0 \end{smallmatrix} \right)$, with $t \in E^{*}/(E^{*})^{2}$. 

We consider now the case $\gamma=\sigma$. By applying $\sigma$ to the second equality of (\ref{equ::rel_A}) and putting together all four equations, we get 
\begin{equation}\label{rel_A2} a (\sigma(b) - \sigma(c) ) = \sigma(d) (c-b) \qquad \text{and}  \qquad  \sigma(a) (b-c) = d (\sigma(c) - \sigma(b)). \end{equation}

\textbf{Case 1: \edit{If} $ a \neq 0$}  then \editA{by \eqref{equ::rel_A}} $d \neq 0$ and \editA{taking the ratio of the equations in \eqref{rel_A2} when it makes sense, or using the first equality of \eqref{equ::rel_A},} gives $0\neq \frac{a}{d}= \frac{\sigma(d)}{\sigma(a)} \editA{:=} x \in E$ , so $a = xd$ and $\sigma(d) = x \sigma(a)$. By applying $\sigma$, we \edit{have} also $\sigma(a) = \sigma(x) \sigma(d) $.  Then $\sigma(a) = \sigma(x) x \sigma(a)$ thus getting $\sigma(x) x =1$.  Replacing \edit{$a$ by $xd$} in the first equality of (\ref{equ::rel_A}) we get $b\sigma(c)=c\sigma(b)$.

If $c=b=0$ then  $A= \left( \begin{smallmatrix} xd &0 \\ 0&d \end{smallmatrix}\right)$, and \editA{modding out by the center} one can just take $A= \left( \begin{smallmatrix} x &0 \\ 0&1 \end{smallmatrix}\right)$. Then \editB{one can verify that indeed} $A\sigma(A)=  \left( \begin{smallmatrix} x\sigma(x) &0 \\ 0&1 \end{smallmatrix}\right)=q\Id$, implying \editA{$q=1$}. 

As $b\sigma(c)=c\sigma(b)$ is symmetric in $b$ and $c$, we consider next that $c \neq 0$, then $b= cy$, with  $y \in E$. Replacing $b= cy$ in $b\sigma(c)=c\sigma(b)$ one gets $y=\sigma(y)$, so $y \in \QQ_p$. Thus $A= \left( \begin{smallmatrix} xd &cy \\ c&d \end{smallmatrix}\right) $ and so  
\[A\sigma(A) 
= \begin{pmatrix} 
d\sigma(d)  + yc\sigma(c) & xdy\sigma(c) + cy\sigma(d) \\
c \sigma(xd) + d \sigma(c) & cy \sigma(c) + d \sigma(d) 
\end{pmatrix} =
\begin{pmatrix} q&0\\0 &q
\end{pmatrix}  \] 
from which it follows 
\[xd \sigma(c) =-c\sigma(d) \qquad  d\sigma(d) + yc\sigma(c) =q,\] 
 \edit{ implying $q \in \QQ_p$, because $y \in \QQ_p$}. We have three equations 
\[ xd\sigma(c) = -c\sigma(d) \qquad \sigma(x) \sigma(d) c= -\sigma(c)d \qquad d \sigma(d)= q-yc\sigma(c).\] 
As by our assumption $c \neq 0$ and $d \neq 0$ then $ 0 \neq \frac{xd}{c}= \frac{-\sigma(d) }{\sigma(c) }=: z \in E$.  Rewriting our equations to include $z$ we have 
\[ xd = zc \qquad \sigma(x) \sigma(d) = \sigma(z) \sigma(c) \qquad - \sigma(d) = \sigma(c)z \qquad -d = c\sigma(z).\] 
Thus $- \sigma(x) z = \sigma(z)$ and we can write $\sigma(x) = \frac{-\sigma(z)}{z}$.  Returning to $A$ and substituting from above $d= \frac{zc}{x}$, we have 
\[ A= \begin{pmatrix} xd &cy \\ c&d \end{pmatrix} 
=
\begin{pmatrix} x\frac{zc}{x} &cy \\ c&\frac{zc}{x}\end{pmatrix} 
= c 
\begin{pmatrix} z&y \\ 1 & - \sigma(z) \end{pmatrix} .
\] 
And returning to our computation \edit{for} $A\sigma(A)$ and taking $q' := \frac{q}{c \sigma(c)} \in \QQ_p$ \editB{one can verify that indeed}
\[\frac{1}{c\sigma(c)} A\sigma(A)= \begin{pmatrix}z \sigma(z) +y &0 \\ 0 & y + \sigma(z)z \end{pmatrix}
=  \begin{pmatrix} \editA{q'}&0\\0 &\editA{q'}
\end{pmatrix}, \qquad q,y \in \QQ_p.\] 

%If $b=0$, then $A = \left( \begin{smallmatrix} xd& 0 \\ c&d \end{smallmatrix} \right) $, and we can reduce $A$ to  $A = \left( \begin{smallmatrix} x & 0 \\ \edit{c'} & 1 \end{smallmatrix} \right) $, for some $\edit{c'} \in E^{*}$.  Taking 
%\[A\sigma(A) = \begin{pmatrix} 
%x \sigma(x) & 0 \\ \edit{c'} \sigma(x) +  \sigma(\edit{c'}) & 1
%\end{pmatrix} 
%= \begin{pmatrix} \editA{q'}&0\\0 &\editA{q'}
%\end{pmatrix}  \]
%we obtain the condition $\edit{c'} \sigma(x) +  \sigma(\edit{c'})=0$. Since $x\sigma(x)=1$, \editA{we get $c'=-x\sigma(c')$}. \editA{Since $\iota_{A}$ and $\iota_{\frac{1}{c'}A}$ give the same involution $\theta$, one can replace $A=  \left( \begin{smallmatrix} x & 0 \\ c' & 1 \end{smallmatrix} \right)$ by $\frac{1}{c'}A=  \left( \begin{smallmatrix} x/c' & 0 \\ 1 & x\sigma(x)/c' \end{smallmatrix} \right) =  \left( \begin{smallmatrix} x/c' & 0 \\ 1 & -\sigma(x)/\sigma(c') \end{smallmatrix} \right) $}. 

\editB{Notice that the case $b=0$ follows from the case when $c \neq 0$ and $b= cy$ by taking $y=0$.} This completes the proof if $a \neq 0$.

\textbf{Case 2: \edit{If} $a=0$} then $d=0$, and $A= \left( \begin{smallmatrix}0 &b \\ c&0 \end{smallmatrix}\right) $, and from (\ref{equ::rel_A})  $b \sigma(c) = c \sigma(b)$.  So $\frac{b}{c}= \frac{\sigma(b)}{\sigma(c)} = :y \neq 0$, and thus $y\in \QQ_p^{*}$.  Then $A= \left( \begin{smallmatrix}0 &cy \\ c&0 \end{smallmatrix}\right) $, reducing to $A= \left( \begin{smallmatrix}0 &y \\ 1&0 \end{smallmatrix}\right) $ with $y \in \QQ_p^{*}$.
\end{proof}

Now we compute the fixed point groups of the involutions from Theorem \ref{thm::inv_SL_E} using the ends of the Bruhat--Tits tree of $\SL(2,K)$, where $K$ is \edit{a} finite field-extension of $E$, and thus of $\QQ_p$, that is chosen suitably. We will show those fixed point groups are either trivial, or compact, or $\GL(2,E)$-conjugate to either the diagonal subgroup in $\SL(2, E)$, or to $\SL(2, \QQ_p)$. Recall $E= \QQ_p(\alpha)$ is a quadratic extension of $\QQ_p$ and $\mathcal{O}_E$ denotes the ring of integers of $E$. % {\blue From \cite{Serre} for $K$ a local field,  the vertices of the Bruhat--Tits tree for $\SL(2,K)$ are equivalence classes of $\mathcal{O}_K$-lattices in $K^2$. }

We start with some easy lemmas.
\begin{lemma}
\label{lem::conj_cond}
Let $X \in \SL(2,E)$, \edit{or} $X \in \GL(2,E)$. Let $\theta= \iota_A \circ \sigma$ be an abstract involution of $\SL(2,E)$, with $A \in \GL(2,E)$ as in Theorem \ref{thm::inv_SL_E}. Then $\iota_{X^{-1}} \circ \theta \circ \iota_{X}= \sigma$ if and only if $A\sigma(X)=qX$, for some $q \in E^{*}$.
\end{lemma}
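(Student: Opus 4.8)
The plan is to unwind the conjugation relation directly on the level of formulas, using the fact that $\sigma$ is a field automorphism and hence commutes appropriately with inner automorphisms. First I would compute, for $X \in \GL(2,E)$, the conjugate $\iota_{X^{-1}} \circ \theta \circ \iota_X$ applied to an arbitrary $g \in \SL(2,E)$. Writing $\theta = \iota_A \circ \sigma$, we get
\[
\bigl(\iota_{X^{-1}} \circ \iota_A \circ \sigma \circ \iota_X\bigr)(g) = X^{-1} A \,\sigma(X g X^{-1})\, A^{-1} X = X^{-1} A \,\sigma(X)\, \sigma(g)\, \sigma(X)^{-1} A^{-1} X,
\]
where I use that $\sigma$ applied entrywise to a product of matrices is the product of the entrywise-$\sigma$ images (i.e.\ $\sigma$ is a ring homomorphism on $\mathrm{Mat}_2(E)$). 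Setting $C := X^{-1} A\, \sigma(X)$, this equals $C\, \sigma(g)\, C^{-1} = \iota_C(\sigma(g)) = (\iota_C \circ \sigma)(g)$.

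So $\iota_{X^{-1}} \circ \theta \circ \iota_X = \iota_C \circ \sigma$ with $C = X^{-1} A\, \sigma(X)$, and the claim reduces to: $\iota_C \circ \sigma = \sigma$ as automorphisms of $\SL(2,E)$ if and only if $C$ is a nonzero scalar matrix, i.e.\ $C = q\,\Id$ for some $q \in E^*$. One direction is immediate: if $C = q\,\Id$ then $\iota_C = \Id$ and $\iota_C \circ \sigma = \sigma$. For the converse, $\iota_C \circ \sigma = \sigma$ forces $\iota_C = \Id$ on $\SL(2,E)$ (composing on the right with $\sigma^{-1} = \sigma$), and since the center of $\GL(2,E)$ is exactly the scalar matrices and the centralizer of $\SL(2,E)$ in $\GL(2,E)$ is the center (as $\SL(2,E)$ acts irreducibly on $E^2$, or simply by a direct check against the standard generators), this gives $C = q\,\Id$. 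Finally, $C = q\,\Id$ unwinds to $X^{-1} A\, \sigma(X) = q\,\Id$, i.e.\ $A\, \sigma(X) = q X$, which is the asserted condition. The case $X \in \SL(2,E)$ versus $X \in \GL(2,E)$ changes nothing in this argument, since we never use $\det X = 1$.

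I expect there is essentially no serious obstacle here; the only point requiring a word of care is the identification of the centralizer of $\SL(2,E)$ in $\GL(2,E)$ with the scalar matrices, which one can dispatch by testing against $\left(\begin{smallmatrix} 1 & 1 \\ 0 & 1\end{smallmatrix}\right)$ and $\left(\begin{smallmatrix} 1 & 0 \\ 1 & 1\end{smallmatrix}\right)$. One should also note that the computation is valid because $\sigma$ fixes $\QQ_p$ pointwise, so it genuinely is a ring automorphism of $\mathrm{Mat}_2(E)$; this is what lets $\sigma$ pass through the matrix product $XgX^{-1}$ term by term. The rest is the bookkeeping of composing three automorphisms, which I would present compactly as above rather than expanding all entries.
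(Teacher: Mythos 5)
Your proof is correct and follows essentially the same route as the paper: the same computation showing $\iota_{X^{-1}}\circ\theta\circ\iota_X=\iota_{X^{-1}A\sigma(X)}\circ\sigma$, followed by the observation that this equals $\sigma$ exactly when $X^{-1}A\sigma(X)$ is a nonzero scalar. The only difference is that where the paper cites \cite[Lemma 2]{HW_class} for the scalar-matrix step, you prove it directly via the centralizer of $\SL(2,E)$ in $\GL(2,E)$, which is a fine substitute.
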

\begin{proof}
By writing the equality $\iota_{X^{-1}} \theta \iota_{X}= \sigma$, we have 
\begin{equation*}
\begin{split}
\sigma(g)&= \iota_{X^{-1}} \theta \iota_{X}(g)= X^{-1}(A(\sigma(XgX^{-1}))A^{-1})X= X^{-1}A\sigma(X)\sigma(g)\sigma(X^{-1})A^{-1}X\\
&=\iota_{X^{-1}A\sigma(X)}(\sigma(g)), \text{ for every } g \in \SL(2,E).
\end{split}
\end{equation*}
Then by \cite[Lemma 2]{HW_class}, we have $X^{-1}A\sigma(X)= q\Id$ for some $q \in E^{*}$. The converse implication is  trivial.
\end{proof}

\begin{definition}
Let $T_d$ be a $d$-regular tree with $d \geq 3$.   Denote by $\partial T_d$ the visual boundary or ends \editA{which are identified for} $T_d$. An automorphism $\theta_1 \in \Aut(T_d)$ is  a \emph{tree-involution} if $\theta_1^{2} = \Id \in \Aut(T_d)$, in particular $\theta_1$ is an elliptic automorphism of $T_d$. 

We denote by $\Fix_{T_d}(\theta_1):=\{ v \in T_d \; \vert \; \theta_1(v)=v \}$, and by $\Fix_{\partial T_d}(\theta_1):=\{ \xi \in \partial T_d \; \vert \; \theta_1(\xi)=\xi \}$. Notice, $\Fix_{T_d}(\theta_1)$ is a (finite or infinite) connected subtree of $T_d$, and when the latter is infinite, we have $\Fix_{\partial T_d}(\theta_1) \subseteq \partial \Fix_{T_d}(\theta_1)$.  

Also, consider the following two subgroups of $\Aut(T_d)$:
$$\Stab_{\Aut(T_d)}(\Fix_{T_d}(\theta_1)):= \{ g \in \Aut(T_d) \; \vert \; g(\Fix_{T_d}(\theta_1))= \Fix_{T_d}(\theta_1) \text{ setwise}\} \text{ and  }$$
$$\Stab_{\Aut(T_d)}(\Fix_{\partial T_d}(\theta_1)):= \{ g \in \Aut(T_d) \; \vert \; g(\Fix_{\partial T_d}(\theta_1))= \Fix_{\partial T_d}(\theta_1) \text{ setwise}\}.$$

If moreover $\Fix_{\partial T_d}(\theta_1)$ is not the empty set we have 
$$\Stab_{\Aut(T_d)}(\Fix_{T_d}(\theta_1)) \leq \Stab_{\Aut(T_d)}(\Fix_{\partial T_d}(\theta_1)) \text{ but they might not be equal.}$$ 
\end{definition}

\begin{lemma}
Let $K$ be a finite field-extension of $\QQ_p$. Let $\theta$ be an abstract involution of $\SL(2,K)$, and let $H_{\theta}:=\{g \in \SL(2,K) \; \vert \; \theta(g)= g\}$. Let $\theta_{1}$ be the automorphism of the Bruhat--Tits tree $T_K$ of $\SL(2,K)$ induced by $\theta$. Then $\theta_1$ is a tree-involution of $T_K$ and 
$$H_{\theta} \leq \Stab_{\Aut(T_K)}(\Fix_{T_K}(\theta_1)) \leq \Stab_{\Aut(T_K)}(\Fix_{\partial T_K}(\theta_1)).$$
\end{lemma}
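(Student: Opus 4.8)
The plan is to establish three things in order: first that $\theta_1$ is well-defined and is a tree-involution, then the first inclusion $H_\theta \leq \Stab_{\Aut(T_K)}(Fix_{T_K}(\theta_1))$, and finally the second inclusion, which is essentially formal.

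\textbf{Step 1: $\theta_1$ exists and is a tree-involution.} By Remark \ref{rem::conj_inv_auto}, applied with $k = K$ (so that any abstract automorphism of $\SL(2,K)$ acts continuously, properly and faithfully on $T_K$ via the Borel--Tits result recalled in \cite[Theorem 9.1 v)]{BCL}), the involution $\theta$ induces an automorphism $\theta_1$ of $T_K$. Since the action $\Aut_{abstrc}(\SL(2,K)) \to \Aut(T_K)$ is a group homomorphism and $\theta^2 = \Id$, we get $\theta_1^2 = \Id$, so $\theta_1$ is a nontrivial (or trivial, but in any case order dividing $2$) automorphism of $T_K$ with $\theta_1^2 = \Id$. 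An automorphism of order dividing two of a tree cannot be hyperbolic, as a hyperbolic isometry has infinite order; hence $\theta_1$ is elliptic, i.e. it fixes a vertex or the midpoint of an edge. In the latter degenerate case one can pass to the barycentric subdivision, or simply observe that the argument below only uses that $Fix_{T_K}(\theta_1)$ is a nonempty $\theta_1$-invariant connected subset; in the standard conventions here $Fix_{T_K}(\theta_1)$ is a nonempty subtree. Thus $\theta_1$ is a tree-involution.

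\textbf{Step 2: $H_\theta$ preserves $Fix_{T_K}(\theta_1)$.} The key point is that the map $g \mapsto g_1$ sending an element of $\SL(2,K)$ (acting by left multiplication / inner automorphism on lattices) to its induced tree automorphism is compatible with conjugation: for $g \in \SL(2,K)$, the tree automorphism induced by $\iota_g \circ \theta \circ \iota_g^{-1}$ is $g_1 \theta_1 g_1^{-1}$, since the action on $T_K$ is by a homomorphism. Now if $g \in H_\theta$, then $\theta(g) = g$, equivalently $\iota_g$ commutes with $\theta$ in $\Aut_{abstrc}(\SL(2,K))$ — more precisely $\theta \circ \iota_g = \iota_{\theta(g)} \circ \theta = \iota_g \circ \theta$. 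Passing to $\Aut(T_K)$, we get $\theta_1 g_1 = g_1 \theta_1$, i.e. $g_1$ commutes with $\theta_1$. A vertex $v \in Fix_{T_K}(\theta_1)$ then satisfies $\theta_1(g_1 v) = g_1 \theta_1(v) = g_1 v$, so $g_1 v \in Fix_{T_K}(\theta_1)$; applying the same to $g_1^{-1}$ shows $g_1$ maps $Fix_{T_K}(\theta_1)$ onto itself. Hence $g_1 \in \Stab_{\Aut(T_K)}(Fix_{T_K}(\theta_1))$, and $H_\theta \leq \Stab_{\Aut(T_K)}(Fix_{T_K}(\theta_1))$ as subgroups of $\Aut(T_K)$ (identifying $H_\theta$ with its faithful image).

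\textbf{Step 3: the second inclusion.} This is the easy observation already recorded in the Definition preceding the Lemma: if $g \in \Aut(T_K)$ satisfies $g(Fix_{T_K}(\theta_1)) = Fix_{T_K}(\theta_1)$ setwise, then $g$ permutes the ends of this subtree, and since $Fix_{\partial T_K}(\theta_1) \subseteq \partial Fix_{T_K}(\theta_1)$, one must still check that $g$ preserves the smaller set $Fix_{\partial T_K}(\theta_1)$ rather than merely $\partial Fix_{T_K}(\theta_1)$. But the same commutation argument as in Step 2 applies to ends: if $g_1$ commutes with $\theta_1$ and $\xi \in \partial T_K$ has $\theta_1(\xi) = \xi$, then $\theta_1(g_1\xi) = g_1\theta_1(\xi) = g_1\xi$. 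So in fact every element of $H_\theta$ lies in $\Stab_{\Aut(T_K)}(Fix_{\partial T_K}(\theta_1))$ directly, and the chain of inclusions in the Definition ($\Stab(Fix_{T_K}) \leq \Stab(Fix_{\partial T_K})$ when $Fix_{\partial T_K} \neq \emptyset$) closes the argument; when $Fix_{\partial T_K}(\theta_1) = \emptyset$ the second group is all of $\Aut(T_K)$ and the inclusion is vacuous.

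\textbf{Main obstacle.} The only genuinely delicate point is the functoriality used in Steps 1--2: one needs that the Borel--Tits action of $\Aut_{abstrc}(\SL(2,K))$ on $T_K$ is a genuine group homomorphism, so that $\theta_1^2 = (\theta^2)_1 = \Id_1$ and so that conjugation on the group side becomes conjugation on the tree side. This is exactly the content of \cite[Theorem 9.1 v)]{BCL} / Remark \ref{rem::conj_inv_auto}, so no new work is required; everything else is formal manipulation of stabilizers and commuting isometries.
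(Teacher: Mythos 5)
Your proof is correct and follows essentially the same route as the paper: deduce $\theta_1^2=\Id$ from $\theta^2=\Id$ via the Borel--Tits action (Remark \ref{rem::conj_inv_auto}), then use the equivariance $\theta_1(g(x))=\theta(g)(\theta_1(x))$ for $g\in H_\theta$ to see that $H_\theta$ preserves the fixed-point sets, with the final inclusion coming from the preceding definition. Your extra care about ends versus $\partial Fix_{T_K}(\theta_1)$ is a slight refinement of the paper's argument but not a different method.
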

\begin{proof}
Since $\theta^{2} = \Id$, and by Remark \ref{rem::conj_inv_auto} we have that $\theta_{1}^{2}=\Id$, thus $\theta_{1}$ is an involution in $\Aut(T_K)$ as claimed, and in particular an elliptic automorphism of $T_K$.

Take now $g \in H_\theta$ and $x \in \Fix_{T_K}(\theta_1)$. Then $\theta_1(g(x))= \theta(g)(\theta_1(x))=g(x)$, and thus $g(x) \in  \Fix_{T_K}(\theta_1)$, implying that $g \in \Stab_{\Aut(T_K)}(\Fix_{T_K}(\theta_1))$ and the conclusion follows. 
\end{proof}

We first want to understand the case of the involution $\theta=\iota_A$ of $\SL(2,E)$  from Theorem \ref{thm::inv_SL_E}
given by the matrix  $A=\left( \begin{smallmatrix} 0 &1 \\ a &0 \end{smallmatrix} \right)$, with $a\in E^{*}/(E^{*})^{2}$.  The fixed point group $H_\theta \leq \SL(2,E)$ of $\theta$ is computed in \cite{HW_class}.   We give a geometric interpretation of $H_\theta$ using the ends of the Bruhat--Tits tree of $\SL(2,K_a)$, where $K_a:=E(\sqrt{a})$ is  a  quadratic extension of $E$.  
The ends of the tree $T_{K_a}$ are the elements of the projective space $P^{1}K_a= \big[ \begin{smallmatrix} 0 \\1  \end{smallmatrix} \big]   \bigsqcup\limits_{x\in K_a} \big[ \begin{smallmatrix} 1 \\x  \end{smallmatrix} \big]$.   Notice, the action of $\theta = \iota_A$ on the Bruhat--Tits tree (and its boundary) of $\SL(2,E)$, and respectively of $\SL(2,K_a)$, is the \editA{automorphism} induced by $A \in \GL(2,E) \editA{\leq \Aut(T_E)}$ on the trees and their respective boundaries.  \editA{For example, for $A= \left(\begin{matrix} x&y \\ z &t \end{matrix} \right) \in \GL(2,E)$ and $\xi = \big[ \begin{smallmatrix} c \\d  \end{smallmatrix} \big] \in P^{1}E$ an end, $A(\xi)=\left(\begin{matrix} x&y \\ z &t \end{matrix} \right) \big[ \begin{smallmatrix} c \\d  \end{smallmatrix} \big]= \big[ \begin{smallmatrix} xc +yd \\zc +td  \end{smallmatrix} \big] $.}

\editB{Given $X \subset P^1K$, we denote by $\Fix_{\SL(2, K)}(X) := \{ g \in \SL(2,K) \;|\; g(x) = x, \forall x \in X \} $ the set of elements in $\SL(2, K)$ fixing $X$ pointwise. }

%Notice, the action of $\theta = \iota_A$ on the Bruhat--Tits tree (and its boundary) of $\SL(2,E)$, and respectively of $\SL(2,K_a)$, is the natural map induced by $A \in \GL(2,E)$ on  the trees and their respective boundaries. \todo{Ref says recall $\theta \cdot \xi$, but I am not sure this is what we want}  Now we will use the ends of the Bruhat--Tits trees to understand the fixed point groups of the abstract involutions. \editA{Given a subset $X \subset P^1 K_a$, the fixed point set $\Fix_{\SL(2,K_a)} (X) = \{A \in \SL(2, K_a) | A(x) =x, \forall x \in X \}$. }

\begin{corollary}
\label{lem::geom_k_inv_SL_2}
Let $F$ be a finite field-extension of $\QQ_p$, $A=\left( \begin{smallmatrix} 0 &1 \\ a &0 \end{smallmatrix} \right)$, with $a\in F^{*}/(F^{*})^{2}$, and $\theta_a:=\iota_A$ the corresponding $F$-involution of $\SL(2,F)$.  Take $K_a:=F(\sqrt{a})$  a quadratic field extension. %of $F$ corresponding to $a$. 
Then the only solutions of the equation $A(\xi)=\xi$ with $\xi \in P^{1}K_a$ are   $\xi_{\pm} := \big[ \begin{smallmatrix} 1 \\ \pm \sqrt{a} \end{smallmatrix} \big]$ and $H_{\theta_a} = \Fix_{\SL(2,K_a)}(\{\xi_{-},\xi_{+}\}) \cap \SL(2,F) =\{  \left( \begin{smallmatrix} x &y \\ ay &x\end{smallmatrix} \right) \in \SL(2,F)\; \vert \; x^2 - ay^2 =1 \} $. Moreover,
\begin{enumerate}
\item
if $a=1$ then  $\xi_{\pm} :=\big[ \begin{smallmatrix}  1 \\ \pm 1 \end{smallmatrix} \big]$  and  $H_{\theta_a}$ contains all the hyperbolic elements of $\SL(2,F)$ with $\xi_{\pm}$ as their repelling and attracting endpoints. In particular, $H_{\theta_a}$ is  $\GL(2,F)$-conjugate to the entire diagonal subgroup of $\SL(2,F)$,  thus $H_{\theta_a}$ is noncompact and abelian. 
\item
if $a\neq 1$ then  $\xi_{\pm} :=\big[  \begin{smallmatrix}  1 \\ \pm  \sqrt{a}  \end{smallmatrix} \big]  \in P^{1}K_a - P^{1}F$, and $H_{\theta_a}$ is compact and abelian.
\end{enumerate}
\end{corollary}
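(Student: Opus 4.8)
The plan is to combine the explicit form of $H_{\theta_a}$ coming from Proposition~\ref{prop::prop_shape_inv} with a direct analysis of the M\"obius action of $A$ on the ends $P^{1}K_a$, and then to split into the two cases according to whether $\sqrt a\in F$.

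First I would compute the fixed ends of $A$: since $A=\left(\begin{smallmatrix}0&1\\a&0\end{smallmatrix}\right)$ acts on $P^{1}K_a$ by $[u:v]\mapsto[v:au]$, a fixed point must have $u\neq0$ and $v^2=au^2$, giving exactly $\xi_\pm=[1:\pm\sqrt a]$, with eigenvectors $v_\pm=\left(\begin{smallmatrix}1\\\pm\sqrt a\end{smallmatrix}\right)$ and $Av_\pm=\pm\sqrt a\,v_\pm$. Next, since $\theta_a(g)=AgA^{-1}$, an element $g$ lies in $H_{\theta_a}$ iff it commutes with $A$, and Proposition~\ref{prop::prop_shape_inv} already identifies this set as $\{\left(\begin{smallmatrix}x&y\\ay&x\end{smallmatrix}\right)\in\SL(2,F)\mid x^2-ay^2=1\}$. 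To get the geometric description, I would check by a short computation that each such $g$ fixes $\xi_+$ and $\xi_-$, and conversely that any $g\in\SL(2,F)$ fixing both $\xi_+$ and $\xi_-$ is diagonal in the basis $\{v_+,v_-\}$, hence commutes with $A$ (which is also diagonal in that basis); this yields $H_{\theta_a}=\Fix_{\SL(2,K_a)}(\{\xi_-,\xi_+\})\cap\SL(2,F)$ and exhibits $H_{\theta_a}$ as the group of $F$-points of the maximal torus of $\SL_2$ fixing $\xi_\pm$.

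For the ``moreover'': in the case $a=1$ we have $\sqrt a=1\in F$, so $\xi_\pm=[1:\pm1]\in P^{1}F=\partial T_F$, and conjugating by $\left(\begin{smallmatrix}1&1\\1&-1\end{smallmatrix}\right)\in\GL(2,F)$ sends $H_{\theta_1}$ to the diagonal subgroup $\{\operatorname{diag}(t,t^{-1})\mid t\in F^{*}\}$, which is abelian and noncompact; any hyperbolic $h\in\SL(2,F)$ with fixed ends $\xi_+$ and $\xi_-$ fixes both, hence lies in $H_{\theta_1}$ by the converse direction above. In the case $a\neq1$ (so $\bar a\neq\bar 1$, $a$ a non-square), $\sqrt a\notin F$ and $\xi_\pm\in P^{1}K_a-P^{1}F$; abelianity of $H_{\theta_a}$ is immediate from multiplying two matrices $\left(\begin{smallmatrix}x&y\\ay&x\end{smallmatrix}\right)$, the product being symmetric in the two factors. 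For compactness I would argue on the tree $T_F$: $\theta_a=\iota_A$ with $A\in\GL(2,F)$ acts on $T_F$ as an order-two, hence elliptic, automorphism, so $\Fix_{T_F}(\theta_a)\neq\emptyset$; and $\theta_a$ fixes no end of $T_F$, its only fixed ends $\xi_\pm$ not being in $P^{1}F$, so $\Fix_{T_F}(\theta_a)$ is a bounded subtree. The earlier lemma gives $H_{\theta_a}\leq\Stab_{\Aut(T_F)}(\Fix_{T_F}(\theta_a))$, so $H_{\theta_a}$ fixes the center of this bounded subtree, hence is contained in a compact vertex- or edge-stabilizer of $\SL(2,F)$, and is therefore compact. (Alternatively, via $\left(\begin{smallmatrix}x&y\\ay&x\end{smallmatrix}\right)\leftrightarrow x+\sqrt a\,y$ one identifies $H_{\theta_a}$ with the norm-one torus $\{\mu\in K_a^{*}\mid N_{K_a/F}(\mu)=1\}$, which is compact by a valuation count.)

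The hard part will be the compactness claim when $a\neq1$: one must be sure that the elliptic automorphism $\theta_a$ of $T_F$ really has bounded fixed set --- equivalently, that the anisotropic torus $H_{\theta_a}$ contains no hyperbolic element of $\SL(2,F)$ --- since this is exactly what produces the dichotomy ``noncompact versus compact''. Everything else reduces to linear algebra over $K_a$ together with Proposition~\ref{prop::prop_shape_inv}.
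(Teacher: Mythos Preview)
Your proposal is correct and follows essentially the same route as the paper. Both compute the fixed ends of $A$ on $P^{1}K_a$ directly, both identify $H_{\theta_a}$ with the pointwise fixer of $\{\xi_-,\xi_+\}$ in $\SL(2,K_a)$ intersected with $\SL(2,F)$, and both split on whether $\sqrt a\in F$. The paper does this via an explicit conjugating matrix $C=\left(\begin{smallmatrix}1&-1/\sqrt a\\ \sqrt a&1\end{smallmatrix}\right)\in\GL(2,K_a)$ carrying $\iota_A$ to $\iota_B$ with $B=\operatorname{diag}(1,-1)$, whereas you phrase the same fact as ``diagonal in the eigenbasis $\{v_+,v_-\}$''; these are the same argument. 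For the compactness claim when $a\neq 1$ the paper simply records that the geodesic $[\xi_-,\xi_+]$ does not lie in $T_F$ and calls the rest ``an easy verification''; your tree argument (bounded fixed set of the elliptic involution $\theta_a$ on $T_F$, via K\"onig's lemma and the earlier lemma $H_\theta\leq\Stab(\Fix_{T_F}(\theta_1))$) and your norm-one-torus alternative both make that verification explicit and are sound.
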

\begin{proof}
We search for all the ends $\xi \in P^{1}K_a$ such that $A(\xi)=\xi$. This means that we search for all vectors  $ \big(  \begin{smallmatrix} 1 \\ x \end{smallmatrix} \big)$ with $x \in K_a$, or $\big( \begin{smallmatrix} 0 \\ 1 \end{smallmatrix} \big)$, such that:
 \[ \left(\begin{matrix} 0 &1 \\ a &0 \end{matrix} \right)  \begin{pmatrix} 1 \\ x \end{pmatrix} = c  \begin{pmatrix} 1 \\ x \end{pmatrix} \editA{\textrm{i.e.,}}  \begin{pmatrix} x \\ a \end{pmatrix} =   \begin{pmatrix} c \\ cx \end{pmatrix} 
\qquad \textrm{or} \qquad 
\left(\begin{matrix} 0 &1 \\ a &0 \end{matrix} \right)  \begin{pmatrix} 0\\ 1 \end{pmatrix} = c  \begin{pmatrix} 0 \\ 1 \end{pmatrix} \editA{\textrm{i.e.,}}  \begin{pmatrix} 1 \\ 0\end{pmatrix} = \begin{pmatrix} 0 \\ c \end{pmatrix}\] 
\text{ for some } $c \in K_a^{*} $. 
One can see that the only solutions are given by $c=x$ and $a=c^2$, implying that $\xi_{\pm} := \big[ \begin{smallmatrix} 1 \\ \pm \sqrt{a} \end{smallmatrix} \big]  \in P^{1}K_a= P^{1}F(\sqrt{a})$ are the only solutions of the equation $A(\xi)=\xi$. 

Next, we claim that the involution $\theta_a =\iota_A$ is  $\GL(2,K_a)$-conjugate to the involution $\iota_{B}$ of $\SL(2,K_a)$, where  $B:= \left(\begin{smallmatrix} 1 &0 \\ 0 &-1 \end{smallmatrix} \right)$. Indeed, taking 
 \[ C:= \left(\begin{matrix} 1 &-\frac{1}{\sqrt{a}} \\ \sqrt{a} &1 \end{matrix} \right) \in \GL(2,K_a) \qquad \textrm{\edit{leads to}} \qquad \iota_{C^{-1}}\circ \theta_a \circ \iota_{C}= \iota_{C^{-1}A C}=\iota_B.\] 

It is easy to see \edit{that} the fixed point group in $\SL(2,K_a)$ of the involution $\iota_B$ is  $\Diag(2, K_a)$,  the full diagonal subgroup of $\SL(2,K_a)$.   
Thus, from above, the fixed point group in $\SL(2,K_a)$ of the involution $\iota_A$ is the subgroup $$\Fix_{\SL(2,K_a)}(\{\xi_{-},\xi_{+}\}) = C \Diag(2,K_a)C^{-1} \leq \SL(2,K_a)$$ which is the stabilizer in $\SL(2,K_a)$ of the bi-infinite geodesic line $[\xi_{-},\xi_{+}]$ in the Bruhat--Tits tree $T_{K_a} \cup \partial T_{K_a}$, and fixes pointwise the two ends $\xi_{-},\xi_{+}$. %In particular,   $\Fix_{\SL(2,K_a)}(\{\xi_{-},\xi_{+}\})$  is $\GL(2,K_a)$-conjugate to the diagonal subgroup of $\SL(2,K_a)$. 
Then $H_{\theta_a}=\Fix_{\SL(2,K_a)}(\{\xi_{-},\xi_{+}\}) \cap \SL(2,F)$. 

Notice, the geodesic line $[\xi_{-},\xi_{+}]$  appears in the Bruhat--Tits tree $T_F \cup \partial T_F$ if and only if $a=1$. The rest of the Corollary is an easy verification, and an application of Proposition \ref{prop::prop_shape_inv}.
\end{proof}

\medskip 
 In the next few pages we will compute the fixed point groups $H_\theta$ for abstract involutions $\theta= \iota_A \circ \sigma$ \editA{from Theorem \ref{thm::inv_SL_E}}. \edit{We will show: }

\begin{theorem}
\label{thm::inv_SL_E_2}
Let  $E=\QQ_p(\alpha)$ be a quadratic extension of $\QQ_p$. Consider any abstract involution $\theta = \iota_A \circ \sigma$ of $\SL(2,E)$ as in Theorem \ref{thm::inv_SL_E}, where $A \in \{\left( \begin{smallmatrix} z &y \\ 1 & -\sigma(z) \end{smallmatrix} \right),  \left( \begin{smallmatrix} x &0 \\ 0 & 1 \end{smallmatrix} \right)\} \subset \GL(2,E)$, with $y \in \QQ_p, z,x\in E$, with $z\sigma(z)+ y \neq 0$ and $x\sigma(x)=1$.
Then one of the following holds for $H_{\theta}:=\{g \in \SL(2,E) \; \vert \; \theta(g)= g\}$:
\begin{enumerate}
\item
$H_{\theta}$ is  $\GL(2,E)$-conjugate to $\SL(2,\QQ_p)$
\item
$H_{\theta}=B \SL(2,K^{\sigma}) B^{-1} \cap \SL(2,E)$, for some matrix  $B \in \GL(2,K)$, where $K \neq E$ is some finite field-extension of $E$ and $K^{\sigma}:=\{x \in K \; \vert \; \sigma(x)=x\}$ is the maximal subfield of $K$ \edit{fixed by $\sigma$}. In particular,  $H_\theta$ is compact or trivial.
\end{enumerate}
\end{theorem}

\begin{proof} Consider $\theta= \iota_A \circ \sigma$, with  $A=  \left( \begin{smallmatrix} x &0 \\ 0 & 1 \end{smallmatrix} \right)$ and where $x=x_1 +\edit{\alpha} x_2\in E=\QQ_p(\alpha)$ such that $x\sigma(x)=1$.  \editA{If $x=1$, then clearly $H_\theta = \SL(2, \QQ_p)$. If $x\neq 1$}, there are two cases: 
\[\begin{array} {cc} 
\textrm{Case 1: } \editA{\textrm{Suppose }x = - 1 \textrm{ then } x_1 = - 1, x_2 =0} 
%x = \pm 1 \Rightarrow x_1 = \pm 1, x_2 =0 
& \textrm{Case 2: } \editA{\textrm{Suppose } x \neq \pm 1  \textrm{ then } x_2 \neq 0} \\
%x \neq \pm 1  \Rightarrow x_2 \neq 0 \\
\editA{\textrm{and set }}B= \left( \begin{matrix} \alpha &0 \\ 0 & 1 \end{matrix} \right)  & \editA{\textrm{and set }} B= \left( \begin{matrix} \frac{x_1+1}{x_2} + \alpha &0 \\ 0 & 1 \end{matrix} \right) 
\end{array}.  \] 
%If $x=\pm 1$, then $x_1=\pm 1$ and $x_2=0$. Then take $B= \left( \begin{matrix} \alpha &0 \\ 0 & 1 \end{matrix} \right) \in \GL(2,E)$ when $x=-1$.  If $x\neq \pm 1$, then $x_2 \neq 0$, and just take $B= \left( \begin{matrix} \frac{x_1+1}{x_2} + \alpha &0 \\ 0 & 1 \end{matrix} \right) \in \GL(2,E)$. 
Then in both cases we see that  $\iota_{B^{-1}}\circ \theta \circ \iota_{B}= \iota_{B^{-1}A \sigma(B)}\circ \sigma=\sigma$,  implying that $\theta=\iota_A \circ \sigma$ is $\GL(2,E)$-conjugate to $\sigma$ via the map $\iota_{B^{-1}}$. Thus in this case $H_{\theta}:=\{g \in \SL(2,E) \; \vert \; \theta(g)= g\}$ is  $\GL(2,E)$-conjugate to $\SL(2,\QQ_p)$. 

\medskip 
Finally, consider  the general case \editA{from Theorem \ref{thm::inv_SL_E}} where $\theta= \iota_A \circ \sigma$, with  $A=  \left( \begin{smallmatrix} z &y \\ 1 & -\sigma(z) \end{smallmatrix} \right)$ and where $y \in \QQ_p, z\in E$ such that $z\sigma(z)+ y \neq 0$. Recall that 
\begin{equation}\label{c_eval} A\sigma(A)=c\Id, \text{ for some }c \in E^{*}. \end{equation}

Notice, the action of $\sigma$ on the Bruhat--Tits tree of $\SL(2,E)$ is again the natural map induced by $\sigma$, as the vertices of $T_E$ can be labeled with $a+\alpha b$, where $a,b\in \QQ_p$ (see \edit{F}igures \ref{fig::fig_unram}, \ref{fig::fig_ram}).  In other words, $\sigma$ induces an action on $T_E$ which fixes the subtree for $T_{\QQ_p}$ and acts as an involution on the remaining branches in $T_E - T_{\QQ_p}$. Then the involution $\theta_1$ of the Bruhat--Tits tree of $\SL(2,E)$ induced by the involution $\theta$ of $\SL(2,E)$ is just the map $\theta_1 = A\circ \sigma$. The same involution $\theta_1= A \circ \sigma$ acts on the boundary $\partial T_E$ of $T_E$.
Moreover, since $A \in \GL(2,E)$, the map $A$ acts on the $\partial T_E$ as an automorphism, and thus is a bijection on $\partial T_E$.

In order to classify the involutions $\theta$ up to $\GL(2,E)$-conjugacy, we \edit{first} want to \editA{find solutions} of the equation $\theta_1(\xi)=A(\sigma(\xi))=\xi$, with $\xi \in \partial T_E$, \editA{and use those solutions to find matrices $B \in \GL(2,E)$ with $A\sigma(B)=cB$ and $c \in E^{*}$}. If there is no such solution, we will solve \edit{this} equation for $\xi \in \partial T_K$, where $K$ is an appropriate finite extension of $E$. Notice that the actions of $A$ and $\sigma$ extend naturally to $T_K \cup \partial T_K$, with $\sigma$ fixing pointwise any primitive element in $K -E$. The action of $\sigma$ extends to $K$, since all the extensions are algebraic and we have $\QQ_p \subset E \subset K$.  

%then any element of $K$ may be written $x_1 + \alpha x_2$ where $x_1, x_2 \in K^{\sigma}$.

Putting together the above remarks, it is enough to find all the solutions of the equation $A(\sigma(A(\xi)))=A(\xi)$, for $\xi \in \partial T_K$ and $K$ a finite extension of \editB{$E$.} \editA{Since $A(\sigma(A(\xi)))=A\sigma(A)(\sigma(\xi))=c\sigma(\xi)$ where $ \xi \in P^{1}K= \big[ \begin{smallmatrix} 0 \\1  \end{smallmatrix} \big]   \bigsqcup\limits_{x\in K} \big[ \begin{smallmatrix} 1 \\x  \end{smallmatrix} \big]$ it} is then equivalent to finding all the $\editA{c'} \in K^{*}$ and $x \in K$ such that 
\begin{equation}
\label{equ::solve_ends}
 \begin{pmatrix} 1 \\ \sigma(x) \end{pmatrix}=\editA{c'}A\begin{pmatrix} 1 \\x \end{pmatrix}, \; \;  \textrm{or} \; \;  \begin{pmatrix} 0 \\ 1 \end{pmatrix}=\editA{c'}A  \begin{pmatrix} 0 \\ 1 \end{pmatrix}
\end{equation}
where $K$ is an appropriate finite extension of $E$ that will be determined below. 
First notice that any element $x \in K$ can be uniquely written as $x=x_1 + \alpha x_2$, with $x_1,x_2$ elements in $K$ such that $\sigma(x_i)=x_i$.

Thus, write  
\[ \left( \begin{matrix} z &y \\ 1 & -\sigma(z) \end{matrix} \right) = A = A_1 + \alpha A_2 =\left( \begin{matrix} z_1 &y \\ 1 & -z_1 \end{matrix} \right) + \alpha \left( \begin{matrix} z_2 &0 \\ 0 & z_2 \end{matrix} \right), \editB{\textrm{ and }} \xi =\bbmat x_1 + \alpha x_2 \\y_1 +\alpha y_2 \ebmat, \]   
and $A_1,A_2 \in M_2(\QQ_p)$, for $x_1,x_2,y_1,y_2 \in K$ such that $\sigma(x_i)=x_i$, $\sigma(y_i)=y_i$.  \editA{Since we will have more flexibility by considering $\xi$ in its general form $\bbmat x_1 + \alpha x_2 \\y_1 +\alpha y_2 \ebmat$,} we want to solve the equation for $\xi \in \partial T_K$ \editA{and $c\in K^{*}$ as in \eqref{c_eval}}:
\begin{equation}
\label{equ::equ_3}
c\begin{pmatrix} x_1 - \alpha x_2 \\y_1 -\alpha y_2 \end{pmatrix}=( A_1 + \alpha A_2)\begin{pmatrix} x_1 + \alpha x_2 \\y_1 +\alpha y_2 \end{pmatrix}
= A_1 \begin{pmatrix} x_1 \\y_1 \end{pmatrix} + \alpha^{2}A_2 \begin{pmatrix} x_2 \\ y_2 \end{pmatrix} +\alpha A_1\begin{pmatrix} x_2 \\ y_2 \end{pmatrix} + \alpha A_2  \begin{pmatrix} x_1 \\y_1 \end{pmatrix}
\end{equation}
where $c=c_1 + \alpha c_2 \in K^{*}$ and $\sigma(c_i)=c_i$. This equation (\ref{equ::equ_3}) is equivalent to the following system of equations:
\begin{equation}
\label{equ::solve_A_equ}
(A_1-c_1\Id) \begin{pmatrix} x_1 \\y_1 \end{pmatrix} = - \alpha^{2}(z_2+c_2) \begin{pmatrix} x_2 \\ y_2 \end{pmatrix},   \; \;  \; \;  (A_1 + c_1\Id) \begin{pmatrix} x_2 \\y_2 \end{pmatrix} = -(z_2-c_2)\begin{pmatrix} x_1 \\ y_1 \end{pmatrix}.
\end{equation}

\editA{Multiplying the second equality of \eqref{equ::solve_A_equ} by $A- c_1 \Id$ and using the first equality, we have: }
%Composing the second equality with the first:
\begin{equation}
\label{equ::solve_A_equ00}
(A_1^{2}-c_1^{2}\Id) \begin{pmatrix} x_2 \\y_2 \end{pmatrix}= (z_1^{2}+y-c_1^{2}) \begin{pmatrix} x_2 \\y_2 \end{pmatrix}= \alpha^{2}(z_2^{2}-c_2^{2})\begin{pmatrix} x_2 \\ y_2 \end{pmatrix}.
\end{equation}

 \edit{We solve Equation (\ref{equ::solve_A_equ00}) by breaking it into cases, by} comparing when the coefficients $(z_1^{2}+y-c_1^{2}) $ and $\alpha^{2}(z_2^{2}-c_2^{2})$ are equal to each other, or zero.

\textbf{Case 1}: If  $\editA{0 \neq }z_1^{2}+y-c_1^{2} \neq  \alpha^{2}(z_2^{2}-c_2^{2}) \neq 0$ then we must have $ \big( \begin{smallmatrix} x_2 \\ y_2 \end{smallmatrix} \big) =\big( \begin{smallmatrix} 0 \\ 0 \end{smallmatrix} \big)$, and so also  $\big( \begin{smallmatrix} x_1 \\ y_1 \end{smallmatrix} \big) =\big( \begin{smallmatrix} 0 \\ 0 \end{smallmatrix} \big)$. Thus there is no solution $\xi \in \partial T_K$ for the system (\ref{equ::solve_A_equ}), for any finite extension $K$.

\textbf{Case 2.1}: If  $z_1^{2}+y-c_1^{2} \neq 0$, $z_2^{2}-c_2^{2}=0$, and $z_2\neq 0 \neq c_2$. Then  $\big( \begin{smallmatrix} x_2 \\ y_2 \end{smallmatrix} \big)= \big(\begin{smallmatrix} 0 \\ 0 \end{smallmatrix}\big)$. If moreover, $z_2=-c_2$, from (\ref{equ::solve_A_equ}) we get that   $\big( \begin{smallmatrix} x_1 \\ y_1 \end{smallmatrix} \big)=\big( \begin{smallmatrix} 0 \\ 0 \end{smallmatrix} \big)$. If instead $z_2=c_2$, from (\ref{equ::solve_A_equ}) we get that  $(A_1-c_1\Id) \big( \begin{smallmatrix} x_1 \\y_1 \end{smallmatrix} \big) = \big( \begin{smallmatrix} 0 \\ 0 \end{smallmatrix} \big)$ and because $\det(A_1-c_1\Id)=-(z_1^2-c_1^2 +y) \neq 0$ we also get   $\big( \begin{smallmatrix} x_1 \\ y_1 \end{smallmatrix}\big) =\big( \begin{smallmatrix} 0 \\ 0 \end{smallmatrix} \big)$. Thus in this Case 2  there is no solution $\xi \in \partial T_K$ for the system (\ref{equ::solve_A_equ}).

\textbf{Case 2.2}: If  $z_1^{2}+y-c_1^{2} \neq 0$,  and $z_2= 0 =c_2$.   This means that  $A_2= \left( \begin{smallmatrix} 0 &0 \\ 0 & 0 \end{smallmatrix} \right)$, and so $A=A_1=\left( \begin{smallmatrix} z_1 &y \\ 1 & -z_1 \end{smallmatrix} \right) \in \GL(2,\QQ_p)$, with $A^{2}=(z_1^2+y)\Id$.  The system of equations (\ref{equ::solve_A_equ}) reduces to 
\begin{equation}
\label{equ::solve_A_equ1}
(A_1-c_1\Id) \begin{pmatrix} x_1 \\y_1 \end{pmatrix} = \begin{pmatrix} 0 \\ 0 \end{pmatrix} ,  \; \;  \; \;   (A_1 + c_1\Id) \begin{pmatrix} x_2 \\y_2 \end{pmatrix} =  \begin{pmatrix} 0 \\ 0 \end{pmatrix}.
\end{equation}
As the matrices $(A_1-c_1\Id)$ and $(A_1+c_1\Id)$ are invertible, with determinant $-(z_1^2-c_1^2 +y) \neq 0$, we get $ \big( \begin{smallmatrix} x_1 \\y_1 \end{smallmatrix} \big) = \big( \begin{smallmatrix} 0 \\ 0 \end{smallmatrix} \big) = \big( \begin{smallmatrix} x_2 \\y_2 \end{smallmatrix}\big)$. Again there is no solution $\xi \in \partial T_K$ for the system (\ref{equ::solve_A_equ}).

\textbf{Case 3}: If  $z_1^{2}+y-c_1^{2} = 0$ and $z_2^{2}-c_2^{2}\neq 0$. Then from (\ref{equ::solve_A_equ00}) we get  $\big( \begin{smallmatrix} x_2 \\y_2 \end{smallmatrix} \big)=\big( \begin{smallmatrix} 0 \\ 0 \end{smallmatrix} \big)$ and then from (\ref{equ::solve_A_equ})  $\big( \begin{smallmatrix} x_1 \\y_1 \end{smallmatrix}\big) =\big( \begin{smallmatrix} 0 \\ 0 \end{smallmatrix} \big)$.  Again there is no solution $\xi \in \partial T_K$ for the system (\ref{equ::solve_A_equ}).

\medskip
\edit{Thus the remaining cases to check for the coefficients that are not covered above is to solve for  $\xi \in \partial T_K$ when}:
\begin{enumerate} 
\item 
$0\neq z_1^{2}+y-c_1^{2} =\alpha^{2}(z_2^{2}-c_2^{2})\neq 0$, and where we take $K$ to be the minimal field extension of $\QQ_p$ that contains $c_1, c_2$
\item
$z_2^2 = c_2^2$, thus for $c_2 \in \QQ_p$, and  $z_1^{2}+y-c_1^{2} = 0$ for some $c_1 \in K$,  where $K$ is a minimal extension of $E$ in which $z_1^{2}+y$ is a square.
\end{enumerate}
 
 In each of the Cases 4, \edit{5.1, 5.2,} 5.3, one checks $B_i \in \GL(2,K)$ since $\det (B_i) \neq 0$ and also that $A\sigma(B_i)=(c_1+\alpha c_2)B_i=cB_i$.
 
\textbf{Case 4}:  If  $0\neq z_1^{2}+y-c_1^{2} =\alpha^{2}(z_2^{2}-c_2^{2})\neq 0$.  Then take the matrix 
$$B_1:=\left( \begin{matrix} \frac{y}{z_2-c_2}  &  \frac{z_1+c_1}{z_2-c_2}+ \alpha \\ \frac{-z_1+c_1}{z_2-c_2}+ \alpha & \frac{1}{z_2-c_2}  \end{matrix} \right).$$ 
%One can check that the determinant of $B_1$ equals $\frac{2\alpha^2 c_2}{z_2-c_2} - \alpha \frac{2 c_1}{z_2-c_2}$ which is not zero as $c=c_1 +\alpha c_2 \neq 0$. Thus $B_1 \in \GL(2,K)$ and one can check further that $A\sigma(B_1)=(c_1+\alpha c_2)B_1=cB_1$. 

%In particular, we have that  $\iota_{B_0^{-1}}\circ \theta \circ \iota_{B_0}= \iota_{B_0^{-1}A \sigma(B_0)}\circ \sigma=\sigma$, impying that $\theta=\iota_A \circ \sigma$ is $\GL(2,K)$-conjugated to $\sigma$, via the map $\iota_{B_0^{-1}}$. Thus in this case $H_{\theta}:=\{g \in \SL(2,E) \; \vert \; \theta(g)= g\}$ is $\GL(2,K)$-conjugated to $\SL(2,\QQ_p)$. 

\textbf{Case 5.1}: If  $z_1^{2}+y-c_1^{2} = 0$, $z_2^{2}-c_2^{2}= 0$, $y\neq 0$, and $z_2=c_2 \neq 0$. Then take the matrix 
$$B_2: = \left( \begin{matrix} -\frac{\alpha^2 (\edit{2z_2})(z_1 +c_1)}{y} +\alpha &\alpha^{2}(\edit{2z_2})+(z_1+c_1) + \alpha(z_1-c_1) \\ -\frac{\alpha(z_1+c_1)}{y} & 1+\alpha \end{matrix} \right).$$ 
%One can check that $B_2 \in \GL(2,K)$ and $A\sigma(B_2)=(c_1+\alpha c_2)B_2=cB_2$. 

%In particular, we have that  $\iota_{B_1^{-1}}\circ \theta \circ \iota_{B_1}= \iota_{B_1^{-1}A \sigma(B_1)}\circ \sigma=\sigma$, impying that $\theta=\iota_A \circ \sigma$ is $\GL(2,E)$-conjugated to $\sigma$, via the map $\iota_{B_1^{-1}}$. Thus in this case $H_{\theta}:=\{g \in \SL(2,E) \; \vert \; \theta(g)= g\}$ is $\GL(2,E)$-conjugated to $\SL(2,\QQ_p)$. 

\textbf{Case 5.2}: If  $z_1^{2}+y-c_1^{2} = 0$, $z_2^{2}-c_2^{2}= 0$, $y\neq 0$, and $z_2=-c_2 \neq 0$. Then take the matrix 
$$\editA{B_3: = \left( \begin{matrix} 1 +\alpha\frac{2z_2}{z_1+c_1}  & (z_1+c_1) -\alpha(z_1-c_1-2z_2) \\ \frac{1}{z_1+c_1} & 1-\alpha \end{matrix} \right).}$$ 
%One can check that $B_3 \in \GL(2,K)$ and $A\sigma(B_3)=(c_1+\alpha c_2)B_3=cB_3$. 

%In particular, we have that  $\iota_{B_2^{-1}}\circ \theta \circ \iota_{B_2}= \iota_{B_2^{-1}A \sigma(B_2)}\circ \sigma=\sigma$, impying that $\theta=\iota_A \circ \sigma$ is $\GL(2,E)$-conjugated to $\sigma$, via the map $\iota_{B_2^{-1}}$. Thus in this case $H_{\theta}:=\{g \in \SL(2,E) \; \vert \; \theta(g)= g\}$ is again $\GL(2,E)$-conjugated to $\SL(2,\QQ_p)$. 

\textbf{Case 5.3}: If  $z_1^{2}+y-c_1^{2} = 0$, $z_2=c_2= 0$, and $y\neq 0$.   This means that $A_2= \left( \begin{smallmatrix} 0 &0 \\ 0 & 0 \end{smallmatrix} \right)$, and so  $A=A_1=\left( \begin{smallmatrix} z_1 &y \\ 1 & -z_1 \end{smallmatrix} \right) \in \GL(2,\QQ_p)$.
Then take the matrix 
$$B_4: = \left( \begin{matrix} \alpha (z_1-c_1)  & 1 \\ \alpha & \frac{c_1-z_1}{y} \end{matrix} \right).$$
%One can check that $B_4 \in \GL(2,K)$ and $A\sigma(B_4)=c_1B_4$. 

%In particular, we have that  $\iota_{B_3^{-1}}\circ \theta \circ \iota_{B_3}= \iota_{B_3^{-1}A \sigma(B_3)}\circ \sigma=\sigma$, impying that $\theta=\iota_A \circ \sigma$ is $\GL(2,E)$-conjugated to $\sigma$, via the map $\iota_{B_3^{-1}}$. Thus in this case $H_{\theta}:=\{g \in \SL(2,E) \; \vert \; \theta(g)= g\}$ is again $\GL(2,E)$-conjugated to $\SL(2,\QQ_p)$. 

 \textbf{Case 5.4}: If  $z_1^{2}-c_1^{2} = 0$, $z_2^2 -c_2^2= 0$, and $y= 0$.  This means that $c=c_1 +\alpha c_2 \in E$ and  $A=\left( \begin{smallmatrix} z &0 \\ 1 & -\sigma(z) \end{smallmatrix} \right) \in \GL(2,E)$. Since multiplication by a constant in $E^{*}$ will give the same results, we can multiply $A$ with $-\frac{1}{\sigma(z)}$, and we get a matrix $A= \left( \begin{smallmatrix} x &0 \\ w & 1 \end{smallmatrix} \right)$ such that $x\sigma(x)=1$ and $w=-x\sigma(w)$, with $x,w \in E^*$.

By our assumption $z_1 \neq 0$ or $z_2 \neq 0$.  There are a few final subcases, \edit{set $x=x'_1 +\alpha x'_2$}, and $w=w_1 +\alpha w_2$: 

\[ \begin{array}{ccc}  \editA{\textrm{if }}z_1 z_2 \neq 0  & \editA{\textrm{then }}  \edit{x' _2} \neq 0 , w_2 \neq 0 & 
\editA{\textrm{and take }}B_5:= \left( \begin{matrix} \frac{1+\edit{x'_1}}{\edit{x'_2}} + \alpha &0 \\  \alpha \frac{w_2}{\edit{x'_2}}  & 1 \end{matrix} \right) \\ 
&&\\
 \editA{\textrm{if }}z_1 z_2 =0 & \textrm{and } x =  1,  \editA{\textrm{ then }} w =\alpha w_2 & \editA{\textrm{take }}B_5:=  \left( \begin{smallmatrix} 1& 0 \\  \alpha \frac{w_2}{2} &1 \end{smallmatrix} \right) \\
&&\\
 \editA{\textrm{if }}z_1 z_2 =0 & \textrm{and } x = -1, \editA{\textrm{ then }} w= w_1 & \editA{\textrm{take }}B_5:=  \left( \begin{smallmatrix}  \alpha &  \alpha  \\  1- \alpha \frac{w_1}{2} &-1- \alpha \frac{w_1}{2}  \end{smallmatrix} \right).
\end{array} 
\] 

%If we take $z_1 z_2 \neq 0$, then we get that $x=x_1 +\alpha x_2$ with $x_2 \neq 0$, and $w=w_1 +\alpha w_2$, with $w_2 \neq 0$. Then take 
%$$B_5:= \left( \begin{matrix} \frac{1+x_1}{x_2} + \alpha &0 \\  \alpha \frac{w_2}{x_2}  & 1 \end{matrix} \right) \in \GL(2,E).$$

%If $z_1z_2=0$, then $x=\pm1$ and $w =\mp \sigma(w)$. Then take 
%$$B_5:=  \left( \begin{smallmatrix} 1& 0 \\  \alpha \frac{w_2}{2} &1 \end{smallmatrix} \right) \in \GL(2,E)$$ 
%if $x = 1$, thus implying $w=\alpha w_2$. Or,  if $x = -1$ thus implying that $w=w_1$, take 
%$$B_5:=  \left( \begin{smallmatrix}  \alpha &  \alpha  \\  1- \alpha \frac{w_1}{2} &-1- \alpha \frac{w_1}{2}  \end{smallmatrix} \right) \in \GL(2,E).$$
In all those subcases we see \editA{$A\sigma(B_5)=B_5$ and note $\det(B_5) \neq 0$ in all cases. }

% We have $\left( \begin{smallmatrix} 1 &0 \\  \alpha w_2 &1 \end{smallmatrix} \right) \left( \begin{smallmatrix} 1& 0 \\ - \alpha \frac{w_2}{2} &1 \end{smallmatrix} \right) = \left( \begin{smallmatrix} 1& 0 \\  \alpha \frac{w_2}{2} &1 \end{smallmatrix} \right)$. 

%We have $\left( \begin{smallmatrix} -1 & 0 \\ w_1 &1 \end{smallmatrix} \right) \left( \begin{smallmatrix} -\alpha  &  - \alpha  \\ 1+ \alpha \frac{w_1}{2} & -1+ \alpha \frac{w_1}{2} \end{smallmatrix} \right) = \left( \begin{smallmatrix}  \alpha &  \alpha  \\  1- \alpha \frac{w_1}{2} &-1- \alpha \frac{w_1}{2}  \end{smallmatrix} \right)$.

\medskip
In all the above cases we  verified $\iota_{B_i^{-1}}\circ \theta \circ \iota_{B_i}= \iota_{B_i^{-1}A \sigma(B_i)}\circ \sigma=\sigma$, \edit{implying} that $\theta=\iota_A \circ \sigma$ is $\GL(2,K)$-conjugated to $\sigma$ via the map $\iota_{B_i^{-1}}$ (both viewed as abstract involutions of $\SL(2,K)$), where $K$ is $E$ or a finite field-extension of $E$.

If  the matrix $B_i$ is in $\GL(2,E)$, then the fixed point group  associated with $\theta$ is the group   $$ H_{\theta} :=\{g \in \SL(2,E) \; \vert \; \theta(g)= g\}= B_i \SL(2,\QQ_p) B_i^{-1}$$ 
that is  $\GL(2,E)$-conjugate to $\SL(2,\QQ_p)$.

But if the matrix $B_i$ is in $\GL(2,K)$, where $K \neq E$ is a finite field-extension of $E$, then the fixed point group  associated with $\theta$ viewed as an involution of $\SL(2,K)$,  is the group  $$H_{\theta}(K):=\{g \in \SL(2,K) \; \vert \; \theta(g)= g\}= B_i \SL(2,K^{\sigma}) B_i^{-1}$$
where $K^{\sigma}:=\{x \in K \; \vert \; \sigma(x)=x\}$ is the maximal subfield of $K$ which does not contain %the element 
 $\alpha$. 
Then 
$$H_{\theta}:=\{g \in \SL(2,E) \; \vert \; \theta(g)= g\}= B_i \SL(2,K^{\sigma}) B_i^{-1} \cap \SL(2,E).$$  \end{proof} 

\begin{lemma}
\label{lem::trivial_boundary_inv_B}
Consider one of the matrices $B_i \in \GL(2,K)$computed above, where $K \neq E$ is a finite field-extension of $E$. Then $B_i (\partial T_{K^{\sigma}}) \cap \partial T_E = \{\emptyset \}$ and $$H_\theta =B_i \SL(2,K^{\sigma}) B_i^{-1} \cap \SL(2,E)$$ is compact or trivial.
\end{lemma}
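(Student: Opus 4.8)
The plan is to show the two halves separately: first that $B_i(\partial T_{K^\sigma}) \cap \partial T_E = \emptyset$, and then to deduce compactness (or triviality) of $H_\theta$ from this. For the first half, I would argue as follows. The boundary $\partial T_{K^\sigma}$ consists of the lines $\big[\begin{smallmatrix}1\\x\end{smallmatrix}\big]$ with $x \in K^\sigma$ together with $\big[\begin{smallmatrix}0\\1\end{smallmatrix}\big]$, and similarly $\partial T_E$ consists of lines with entries in $E$. Recall from the case analysis preceding the lemma that $B_i$ was precisely constructed so that $A\sigma(B_i) = c B_i$, which by Lemma \ref{lem::conj_cond} means $\iota_{B_i^{-1}} \circ \theta \circ \iota_{B_i} = \sigma$; consequently $B_i$ conjugates the $\sigma$-fixed ends to the $\theta_1$-fixed ends, i.e.\ $B_i(\partial T_{K^\sigma})$ is exactly the set of $\xi \in \partial T_K$ with $\theta_1(\xi) = A(\sigma(\xi)) = \xi$. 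But the whole point of Cases 1, 2.1, 2.2, 3 was that the equation $A(\sigma(\xi)) = \xi$ has \emph{no} solution with $\xi \in \partial T_E$ (only in a strictly larger extension $K$); equivalently, the element $c = c_1 + \alpha c_2 \in K^*$ realizing the fixed end does not lie in $E$, because in Case 4 one needs $c_1, c_2$ generating the extension $K \neq E$, and in Cases 5.1--5.4 one needs $c_1 \in K \setminus E$ with $z_1^2 + y = c_1^2$. Hence no $\theta_1$-fixed end lies in $\partial T_E$, which is exactly the assertion $B_i(\partial T_{K^\sigma}) \cap \partial T_E = \emptyset$.

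For the second half I would use the tree geometry. Set $L := B_i \SL(2, K^\sigma) B_i^{-1}$, a subgroup of $\SL(2,K)$ conjugate to $\SL(2,K^\sigma)$, so $L$ stabilizes the subtree $B_i(T_{K^\sigma}) \subset T_K$ and acts on it with the dynamics of $\SL(2,K^\sigma)$ on its own Bruhat--Tits tree; in particular the ends of $B_i(T_{K^\sigma})$ are exactly $B_i(\partial T_{K^\sigma})$, and a hyperbolic element of $L$ has both its attracting and repelling endpoints in this set. Now take $g \in H_\theta = L \cap \SL(2,E)$. As an element of $\SL(2,E)$, $g$ acts on $T_E$; if $g$ were hyperbolic on $T_E$ its two fixed endpoints would lie in $\partial T_E$, but since $g \in L$ those same endpoints lie in $B_i(\partial T_{K^\sigma})$, contradicting the first half. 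Therefore every $g \in H_\theta$ is elliptic on $T_E$, so $H_\theta$ fixes no end of $T_E$ and consists of elliptic isometries. A standard fact about groups acting on trees (e.g.\ via the Bruhat--Tits fixed-point theorem, since $H_\theta$ as a closed subgroup of $\SL(2,E)$ is locally compact and its elliptic elements generate a bounded orbit) then gives that $H_\theta$ fixes a point of $T_E$, hence is contained in a vertex stabilizer, which is compact in $\SL(2,E)$. Being a closed subgroup of a compact group, $H_\theta$ is compact; and it may of course be trivial.

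The main obstacle I expect is making the last deduction rigorous: showing that a group of elliptic isometries of the tree with no fixed end actually has a global fixed point. The cleanest route is probably to observe that $H_\theta = L \cap \SL(2,E)$ already has a \emph{bounded orbit} in $T_E$ — indeed $L$ stabilizes the subtree $B_i(T_{K^\sigma})$, but since none of its ends lie in $\partial T_E$, the intersection $B_i(T_{K^\sigma}) \cap T_E$ is a bounded (finite) subtree of $T_E$, and $H_\theta$ preserves it setwise. A nonempty bounded subtree of a tree has a canonical center (or central edge) fixed by any group preserving it, so $H_\theta$ fixes a vertex or inverts an edge of $T_E$, placing it inside a compact subgroup of $\SL(2,E)$; since $H_\theta$ is closed, it is compact. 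I would need to check carefully that $B_i(T_{K^\sigma}) \cap T_E$ is indeed finite — this follows because an infinite subtree of a tree has at least one end, and any end of $B_i(T_{K^\sigma}) \cap T_E$ would be an end of both $T_E$ and $B_i(T_{K^\sigma})$, contradicting the first half — and that triviality genuinely occurs in some of the subcases, e.g.\ when $K^\sigma = \QQ_p$ and the relevant intersection reduces to $\{\pm \Id\}$ or less, which is a routine check against the explicit matrices $B_1,\dots,B_5$.
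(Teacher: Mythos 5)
Your second half is in substance the paper's argument and is fine: from the boundary disjointness one deduces that $B_i(T_{K^{\sigma}})\cap T_E$ is either empty or a finite subtree, that $H_\theta$ preserves this intersection (in the empty case one should say a word about the bridge between the two $H_\theta$-invariant subtrees, a point the paper also leaves implicit), and hence that $H_\theta$ stabilizes a finite subtree and is compact or trivial; your remark that an infinite intersection would produce a common end is a useful justification the paper omits, and your first route through ``all elements are elliptic'' is correctly flagged by you as insufficient on its own (a closed group of elliptic elements can fix an end and be noncompact), so the bounded-subtree route is the right one.

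The genuine gap is in the first half. Cases 1, 2.1, 2.2 and 3 of the preceding analysis do \emph{not} say that $A(\sigma(\xi))=\xi$ has no solution $\xi\in\partial T_E$; they say that for scalars $c$ in those parameter ranges there is no fixed end in $\partial T_K$ for \emph{any} extension $K$. They are silent on the question the lemma actually asks, namely whether some of the fixed ends that do exist in Cases 4 and 5.1--5.3 (the set $B_i(\partial T_{K^{\sigma}})$) happen to lie in $\partial T_E$. Your phrase ``equivalently, the element $c$ realizing the fixed end does not lie in $E$'' conflates two different scalars: the specific $c\in K\setminus E$ used to build $B_i$, and the scalar attached to a hypothetical fixed end $\xi\in\partial T_E$, which changes by a factor $\sigma(\mu)\mu^{-1}$ when the representative of $\xi$ is rescaled and which, for an $E$-rational representative, automatically lies in $E$. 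Bridging exactly this point is the content of the paper's proof: one assumes a fixed end whose representative has $\QQ_p$-components, writes the system (\ref{equ::ends_K}), and shows its scalar components must lie in $\QQ_p$ --- quick in Cases 5.1--5.3, but in Case 4 requiring the extra manipulation that yields $c_2(x_1^2-\alpha^2 x_2^2)\in\QQ_p$ and $c_1(y_1^2-\alpha^2 y_2^2)\in\QQ_p$ --- and only then contradicts $c\in K\setminus E$. None of this appears in your proposal, so the first assertion of the lemma is assumed rather than proved. (Also, Case 5.4 should not be on your list: there $c\in E$ and $B_5\in\GL(2,E)$, so it falls outside the lemma's hypothesis $K\neq E$.)
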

\begin{proof}
Take a matrix $B_i$ as above such that $c=c_1+\alpha c_2 \in K-E$, i.e. as in Cases 4, 5.1, 5.2, 5.3. Then the first observation is that the set of all ends in $\partial T_K$ that are pointwise fixed by the involution $\theta_1=A \circ \sigma$ on $T_K \cup \partial T_K$ induced from the involution $\theta=\iota_A \circ \sigma$, is the set $B_i(\partial T_{K^{\sigma}})$. In particular, for a representative  $\left( \begin{smallmatrix} x_1 + \alpha x_2 \\y_1 +\alpha y_2 \end{smallmatrix}\right)$ of such an end  $\xi =\big [ \begin{smallmatrix} x_1 + \alpha x_2 \\y_1 +\alpha y_2 \end{smallmatrix} \big] \in B_i(\partial T_{K^{\sigma}})$, where $x_1,x_2,y_1,y_2 \in K$ with $\sigma(x_i)=x_i$, $\sigma(y_i)=y_i$, we have that $A(\sigma(\xi))=\xi$ and this becomes
$$\theta_1\left(\begin{matrix} x_1 + \alpha x_2 \\y_1 +\alpha y_2 \end{matrix}\right)= A\left(\sigma\left(\begin{matrix} x_1 + \alpha x_2 \\y_1 +\alpha y_2 \end{matrix}\right)\right)= A\left(\begin{matrix} x_1 - \alpha x_2 \\y_1 -\alpha y_2 \end{matrix}\right)=c\left(\begin{matrix} x_1 + \alpha x_2 \\y_1 +\alpha y_2 \end{matrix}\right)$$
\edit{for the $c \in K-E$ considered above.}
Notice, our equations (\ref{equ::equ_3}) and (\ref{equ::solve_A_equ}) above are derived from the equation $A(\eta)=c\sigma(\eta)$, where $\xi=A(\eta)$.

Suppose that there is $\xi \in \partial T_E$ such that  $A(\sigma(\xi))=\xi$. Then this means that for a representative  $\left( \begin{smallmatrix} x_1 + \alpha x_2 \\y_1 +\alpha y_2 \end{smallmatrix}\right)$ of $\xi$, with $x_1,x_2,y_1,y_2 \in \QQ_p$ we must have 
\begin{equation} 
\label{equ::ends_K}
(A_1-c_1\Id) \begin{pmatrix} x_1 \\y_1 \end{pmatrix} = \alpha^{2}(z_2+c_2) \begin{pmatrix} x_2 \\ y_2 \end{pmatrix},   \; \;  \; \;  (A_1 + c_1\Id) \begin{pmatrix} x_2 \\y_2 \end{pmatrix} = (z_2-c_2)\begin{pmatrix} x_1 \\ y_1 \end{pmatrix}.
\end{equation}
 For Cases 5.1, 5.2, 5.3, a solution $\xi \in \partial T_E$ of the system (\ref{equ::ends_K}) will imply $c_1 \in \QQ_p$, and since in those cases $c_2 \in \QQ_p$,  in contradiction with the assumption that $c \in K -E$.

\edit{For} Case 4, taking separately the two equalities of \edit{System} (\ref{equ::ends_K}), we will get that 
$$c_1x_1 + c_2x_2\alpha^2, c_1y_1+c_2y_2 \alpha^2, c_1x_2 + c_2x_1, c_1y_2+c_2y_1 \in \QQ_p.$$
By multiplying accordingly, we will get that $c_2(x_1^2 -\alpha^2 x_2^2) \in \QQ_p$ and  $c_1(y_1^2 -\alpha^2 y_2^2) \in \QQ_p$, thus $c_1,c_2$ must be in $\QQ_p$, giving again a contradiction with our assumption.

By the first part of the lemma we now know that $B_i (\partial T_{K^{\sigma}}) \cap \partial T_E = \{\emptyset \}$. This implies that the intersection of the tree $B_i(T_{K^{\sigma}})$ with the tree $T_E$ is either empty or a finite subtree of $T_E$. In both such cases, we must have that $H_{\theta}$ stabilizes setwise a finite connected subtree of $T_K$, implying that $H_\theta$ is compact or trivial.
\end{proof}

%{\red Some additional questions: 1. Does $H_{\theta}$ have a finite number of orbits on the boundary $\partial T_E$? 2. Given $E/F$ a finite extension of $F$, how many closed/open orbits does $\SL(2,F)$ have on the boundary $\partial T_E$? }

\begin{remark}  We leave it as an open question to compute the possibilities for compact $H_\theta$ that preserve a finite subtree. 
\end{remark} 
%However, notice that this problem is deeper than it appears at first glance.  Pink \cite{Pink} Corollary 0.5, tells us that such subgroups have a filtration.   If in addition the compact subgroups are open (which must not be the case), then Bruhat--Tits theory tells us they are finite index.   This then becomes a matter of answering the congruence subgroup problem, see \cite{Raghunathan} and others, for $\SL(2, \ZZ_p)$.    We would also like to understand the action of the congruence subgroups on the tree, and we conjecture that the index of the subgroup is related to the degree of the extension $K$. 

\section{Polar decomposition of $\SL(2,E)$ with respect to various subgroups}\label{sec:polar}%%%%%%%%%%%%%%%%%%

Let $F$ be a finite field-extension of $\QQ_p$ and $E=F(\alpha)$ be a quadratic extension of $F$. Denote by $\omega$ a uniformizer of $F$. It is well known that $\vert E^{*}/(E^{*})^{2}\vert =4=\vert F^{*}/(F^{*})^{2}\vert$.  Recall, throughout this article we consider $p\neq 2$.

Inspired by techniques from representation theory and spherical varieties, we first give an upper bound for the number of orbits of various subgroups of $\SL(2,K)$ on the boundary $\partial T_K$, where $K$ is either $F$ or $E$. Those results will be used to apply the various polar decompositions proven in Proposition \ref{prop::polar_decom} and to compute Chabauty limits of groups of involutions, and also to provide different, direct, and more geometric proofs than in \cite{HW_class,HelWD,Hel_k_invol} for the case of $\SL_2$.

\begin{lemma}
\label{lem::nr_orbits_SL(F)_SL(E)}
There are at most $5$ $\SL(2,F)$-orbits on the boundary $\partial T_{E}\cong P^1E=E \cup \{\infty\}$: 
\begin{enumerate}
\item 
the $\SL(2,F)$-orbit of  $\big [ \begin{smallmatrix} 1 \\ 0 \end{smallmatrix} \big]$
\item
the $\SL(2,F)$-orbits of  $\big[ \begin{smallmatrix} 1 \\ m\alpha \end{smallmatrix} \big]$, for each $m \in F^{*}/(F^{*})^{2}$,  that might coincide for different $m$'s.
\end{enumerate}
\end{lemma}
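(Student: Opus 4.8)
The plan is to work with the projective action of $\SL(2,F)\subset\SL(2,E)$ on $\partial T_E\cong P^1(E)=\{[1:x]\mid x\in E\}\sqcup\{\infty\}$, with $\infty=[0:1]$ as in the paper's convention, and to reduce an arbitrary end to one of the listed representatives using only elementary matrices with entries in $F$. First I would record the three elementary moves needed: the lower unipotent $\left( \begin{smallmatrix} 1 & 0 \\ t & 1 \end{smallmatrix} \right)$ with $t\in F$ acts by $[1:x]\mapsto[1:x+t]$ and fixes $\infty$; the diagonal matrix $\left( \begin{smallmatrix} \lambda & 0 \\ 0 & \lambda^{-1} \end{smallmatrix} \right)$ with $\lambda\in F^{*}$ acts by $[1:x]\mapsto[1:\lambda^{-2}x]$; and $\left( \begin{smallmatrix} 0 & -1 \\ 1 & 0 \end{smallmatrix} \right)$ sends $[1:0]$ to $\infty$.

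Then I would split into two cases. If the end $\xi$ lies in $P^1(F)\subset P^1(E)$ — that is, $\xi=[1:x]$ with $x\in F$, or $\xi=\infty$ — then, since the translations $\left( \begin{smallmatrix} 1 & 0 \\ t & 1 \end{smallmatrix} \right)$ ($t\in F$) together with $\left( \begin{smallmatrix} 0 & -1 \\ 1 & 0 \end{smallmatrix} \right)$ already act transitively on $P^1(F)$, the end $\xi$ lies in the $\SL(2,F)$-orbit of $[1:0]$; this accounts for item (1) and contributes a single orbit. If instead $\xi=[1:x]$ with $x=a+b\alpha$, $a\in F$ and $b\in F^{*}$, then applying $\left( \begin{smallmatrix} 1 & 0 \\ -a & 1 \end{smallmatrix} \right)$ carries $\xi$ to $[1:b\alpha]$, and applying $\left( \begin{smallmatrix} \lambda & 0 \\ 0 & \lambda^{-1} \end{smallmatrix} \right)$ carries that to $[1:\lambda^{-2}b\,\alpha]$; hence $\xi$ lies in the $\SL(2,F)$-orbit of $[1:m\alpha]$, where $m$ is the class of $b$ in $F^{*}/(F^{*})^{2}$. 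This accounts for item (2), contributing at most $|F^{*}/(F^{*})^{2}|=4$ further orbits, which may of course coincide for different $m$ once more elements of $\SL(2,F)$ are allowed. Adding up gives at most $1+4=5$ orbits, as claimed.

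There is no serious obstacle in this argument: the only point requiring care is keeping the projective action consistent with the paper's labelling of $\partial T_E$ and making sure the elementary matrices are chosen over $F$ rather than $E$ (using $\lambda\in F^{*}$ and $t\in F$ is exactly why only the square class of $b$ in $F^{*}/(F^{*})^{2}$, and not in $E^{*}$, is killed). Determining precisely which of the orbits of $[1:m\alpha]$ genuinely merge would require analysing the full $\SL(2,F)$-action on $\partial T_E$, but that refinement is not needed for the stated upper bound.
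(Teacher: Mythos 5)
Your proposal is correct and is essentially the paper's argument: the paper simply packages your unipotent and diagonal moves into a single lower-triangular element $\left(\begin{smallmatrix} d^{-1} & 0 \\ c & d\end{smallmatrix}\right)\in\SL(2,F)$, whose orbit of $\big[\begin{smallmatrix}1\\ m\alpha\end{smallmatrix}\big]$ consists of the points $\big[\begin{smallmatrix}1\\ cd+d^{2}m\alpha\end{smallmatrix}\big]$ and hence sweeps out $\partial T_E-\partial T_F$ as $m$ runs over $F^{*}/(F^{*})^{2}$. Your explicit verification that $\partial T_F$ is a single orbit of $\big[\begin{smallmatrix}1\\ 0\end{smallmatrix}\big]$ is left implicit in the paper, but the content is the same.
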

\begin{proof}
Indeed, consider the subgroup  $B:=\{ \left( \begin{smallmatrix} d^{-1} &0 \\ c & d\end{smallmatrix} \right) \; \vert \; c \in F, \; d \in F^{*}\}$ in $\SL(2,F)$.  The image of $\big[\begin{smallmatrix}  1 \\ m\alpha \end{smallmatrix} \big] $ under $B$, is \editA{the set $ \left \{ \big[\begin{smallmatrix} 1 \\ cd + d^{2}m\alpha \end{smallmatrix} \big]\;| \;c \in F, d \in F^* \right \} $ and these cover the entire $\partial T_{E}- \partial T_F$ since we may choose $c,d$ so that $cd + d^{2}m\alpha$ takes any value in $E$.} 
\end{proof}

\begin{lemma}
\label{lem::nr_orbits_diag_SL}
Let $K$ be a finite field-extension of $\QQ_p$. There are exactly $6$ orbits of the diagonal subgroup  $\edit{\Diag(2,K)}:=\{ \left( \begin{smallmatrix} d^{-1} &0 \\ 0 & d\end{smallmatrix} \right) \; \vert \; d \in K^{*}\} \leq \SL(2,K)$ on the boundary $\partial T_K$:
\begin{enumerate}
\item 
the $\edit{\Diag(2,K)}$-orbit of  $\big[\begin{smallmatrix} 1 \\ 0 \end{smallmatrix} \big]$ and the $\editB{\Diag(2,K)}$-orbit of  \big[$\begin{smallmatrix} 0 \\ 1 \end{smallmatrix} \big] $
\item
the $\edit{\Diag(2,K)}$-orbits of  $\big[\begin{smallmatrix} 1 \\ m \end{smallmatrix} \big]$, for each $m \in K^{*}/(K^{*})^{2}$. 
\end{enumerate}
\end{lemma}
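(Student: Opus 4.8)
The plan is to understand the action of $Diag(K)$ on $P^1 K$ by working in the affine chart. First I would note that $\partial T_K \cong P^1 K = \{[\begin{smallmatrix} 1 \\ x \end{smallmatrix}] : x \in K\} \cup \{[\begin{smallmatrix} 0 \\ 1 \end{smallmatrix}]\}$, and that a diagonal element $\left( \begin{smallmatrix} d^{-1} & 0 \\ 0 & d \end{smallmatrix} \right)$ sends $[\begin{smallmatrix} 1 \\ x \end{smallmatrix}]$ to $[\begin{smallmatrix} d^{-1} \\ dx \end{smallmatrix}] = [\begin{smallmatrix} 1 \\ d^2 x \end{smallmatrix}]$, and fixes both $[\begin{smallmatrix} 1 \\ 0 \end{smallmatrix}]$ and $[\begin{smallmatrix} 0 \\ 1 \end{smallmatrix}]$. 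So the two "coordinate" ends $[\begin{smallmatrix} 1 \\ 0 \end{smallmatrix}]$ and $[\begin{smallmatrix} 0 \\ 1 \end{smallmatrix}]$ are each a singleton orbit, giving the two orbits in item (1); and these are distinct from each other and from any orbit of an end $[\begin{smallmatrix} 1 \\ x \end{smallmatrix}]$ with $x \neq 0$, since such an end has $x \neq 0$ in every representative of the form $[\begin{smallmatrix} 1 \\ \cdot \end{smallmatrix}]$.

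Next I would analyze the remaining ends $[\begin{smallmatrix} 1 \\ x \end{smallmatrix}]$ with $x \in K^*$. From the computation above, the $Diag(K)$-orbit of $[\begin{smallmatrix} 1 \\ x \end{smallmatrix}]$ is exactly $\{[\begin{smallmatrix} 1 \\ d^2 x \end{smallmatrix}] : d \in K^*\}$, i.e. the set of ends $[\begin{smallmatrix} 1 \\ x' \end{smallmatrix}]$ with $x' \in (K^*)^2 \cdot x$. Hence two ends $[\begin{smallmatrix} 1 \\ x \end{smallmatrix}]$ and $[\begin{smallmatrix} 1 \\ x' \end{smallmatrix}]$ are in the same $Diag(K)$-orbit if and only if $x$ and $x'$ lie in the same square class of $K^*$. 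Since $K$ is a finite extension of $\QQ_p$ with $p \neq 2$, we have $|K^*/(K^*)^2| = 4$ (recorded in the excerpt: $F^*/(F^*)^2 = \{1, \omega_F, S, S\omega_F\}$, and likewise for any such $K$), so the ends with nonzero second coordinate split into exactly $4$ orbits, indexed by the square classes $m \in K^*/(K^*)^2$, as in item (2).

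Finally I would assemble the count: the two singleton orbits from the coordinate ends, plus the four orbits parametrized by $K^*/(K^*)^2$, are pairwise distinct and exhaust $\partial T_K$, giving exactly $6$ orbits. The main (minor) obstacle is simply being careful about representatives: one must check that $[\begin{smallmatrix} 0 \\ 1 \end{smallmatrix}]$ and $[\begin{smallmatrix} 1 \\ 0 \end{smallmatrix}]$ are genuinely separate orbits and not accidentally identified with one of the square-class orbits, which is immediate from the normal form of projective representatives, and that distinct square classes really do give distinct orbits, which is the content of the observation that the stabilizer acts on the affine coordinate by multiplication by $(K^*)^2$. No deeper input than the structure of $K^*/(K^*)^2$ is needed.
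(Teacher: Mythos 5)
Your proposal is correct and follows essentially the same route as the paper: the diagonal element acts on the affine coordinate by $x \mapsto d^2 x$, so the two coordinate ends are fixed and the remaining ends fall into orbits indexed by the square classes of $K^*$, of which there are $4$ since $p \neq 2$. Your added care in checking that distinct square classes give distinct orbits is a small but welcome precision over the paper's terse version; otherwise the arguments coincide.
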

\begin{proof}
The subgroup $\edit{\Diag(2,K)}$ fixes pointwise the ends  $\big[ \begin{smallmatrix} 1 \\ 0 \end{smallmatrix} \big]$ and $\big[\begin{smallmatrix} 0 \\ 1 \end{smallmatrix} \big]$.   \edit{For each  $m \in K^{*}/(K^{*})^{2}$, the $\edit{\Diag(2,K)}$-orbit of $\big[ \begin{smallmatrix}  1 \\ m \end{smallmatrix} \big]$,  is the set of vectors} of the form $\big[ \begin{smallmatrix} 1 \\ d^{2}m \end{smallmatrix} \big]$, \edit{and these vectors} cover the entire boundary   $\partial T_{E} - \{\big[ \begin{smallmatrix}  0 \\ 1 \end{smallmatrix} \big] ,\big[  \begin{smallmatrix} 1 \\ 0 \end{smallmatrix} \big]\}$.
\end{proof}

\begin{lemma}
\label{lem::nr_orbits_H_SL}
Let $F$ be a finite field-extension of $\QQ_p$, $A=\left( \begin{smallmatrix} 0 &1 \\ a &0 \end{smallmatrix} \right)$, with $a\in F^{*}/(F^{*})^{2}$, $a\neq 1$ and $\theta_a:=\iota_A$ the corresponding $F$-involution of $\SL(2,F)$.  
Then $H_{\theta_a}:=\{g \in \SL(2,F) \; \vert \; \theta_a(g) = g\}$ has at most $8$ orbits \edit{in} the boundary $\partial T_F$.
\end{lemma}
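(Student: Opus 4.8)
The plan is to move the $H_{\theta_a}$-action on $\partial T_F$ into the Bruhat--Tits tree of $\SL(2,K_a)$, where $K_a := F(\sqrt a)$, and there identify $H_{\theta_a}$ with the norm-one subtorus of the diagonal. Since $a\neq 1$ in $F^*/(F^*)^2$, the extension $K_a/F$ is a genuine quadratic (hence cyclic) extension. By Corollary \ref{lem::geom_k_inv_SL_2} we have $H_{\theta_a} = C\,Diag(2,K_a)\,C^{-1}\cap \SL(2,F)$ with $C = \left(\begin{smallmatrix}1 & -1/\sqrt a\\ \sqrt a & 1\end{smallmatrix}\right)$, the matrix whose columns are the two eigenlines $\xi_\pm$ of $A$ over $K_a$. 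Conjugating everything by $C^{-1}$, the number of $H_{\theta_a}$-orbits on $\partial T_F = \mathbb{P}^1(F)\subseteq \mathbb{P}^1(K_a) = \partial T_{K_a}$ equals the number of orbits of $T^1 := C^{-1}H_{\theta_a}C$ on $C^{-1}(\mathbb{P}^1(F))$; and a one-line matrix computation (diagonalising $\left(\begin{smallmatrix}x&y\\ay&x\end{smallmatrix}\right)$ by $C$, with eigenvalues $x\pm y\sqrt a$) shows $T^1 = \{\operatorname{diag}(u,u^{-1}) \;:\; u\in K_a^*,\ N_{K_a/F}(u)=1\}$.

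First I would work out $C^{-1}(\mathbb{P}^1(F))$ explicitly. Writing a boundary point as $[1:z]$ with $z\in F$ (and $[0:1]$ for $z=\infty$), one gets $C^{-1}[1:z] = [w(z):1]$ with $w(z) = \tfrac1{\sqrt a}\cdot\tfrac{z+\sqrt a}{z-\sqrt a}$ and $w(\infty) = 1/\sqrt a$. The crucial point is that $z-\sqrt a$ is the Galois conjugate of $z+\sqrt a$, so $\tfrac{z+\sqrt a}{z-\sqrt a}$ has norm $1$; moreover $z\mapsto F\cdot(z+\sqrt a)$ (and $\infty\mapsto F\cdot 1$) identifies $\mathbb{P}^1(F)$ with $\mathbb{P}(K_a)=K_a^*/F^*$, so by Hilbert 90 the quantity $\tfrac{z+\sqrt a}{z-\sqrt a}$ runs bijectively over $\ker(N_{K_a/F})$. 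Hence $C^{-1}(\mathbb{P}^1(F)) = \{[w:1] : w\in \tfrac1{\sqrt a}\ker N\}$ --- in particular it contains neither of the $Diag(2,K_a)$-fixed ends $[1:0]$ and $[0:1]$ --- and $T^1$ acts on it by $[w:1]\mapsto[u^2 w:1]$. Consequently the $T^1$-orbits on $C^{-1}(\mathbb{P}^1(F))$ are precisely the cosets of $(\ker N)^2$ in $\ker N$, so the number of $H_{\theta_a}$-orbits on $\partial T_F$ equals $[\ker N : (\ker N)^2]$.

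Finally I would bound this index. Through the Hilbert 90 isomorphism $\ker N \cong K_a^*/F^*$ it equals $|(K_a^*/F^*)\otimes\ZZ/2\ZZ|$, which is a quotient of $K_a^*\otimes\ZZ/2\ZZ = K_a^*/(K_a^*)^2$; since $p\neq 2$ and $K_a$ is a finite extension of $\QQ_p$, the latter has order $4$, so there are at most $4$, in particular at most $8$, orbits. (Tracking the roots of unity of norm one --- which modulo squares contribute only $\{\pm 1\}$ when $p\neq 2$ --- actually pins the exact number down to $2$, but that refinement is not needed here.) The one step that is not pure bookkeeping is the explicit computation of $C^{-1}(\mathbb{P}^1(F))$ together with the recognition, via the Galois involution, that it is a single coset of (the image of) the norm-one torus; once that is in hand, steps 1 and 3 are routine conjugation arithmetic and standard facts about $p$-adic fields.
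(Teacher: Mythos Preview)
Your argument is correct and in fact yields the sharper bound $4$ (indeed, as you note, exactly $2$). It is, however, a genuinely different route from the paper's. The paper works ``top-down'': it first uses Lemma~\ref{lem::nr_orbits_diag_SL} to bound the number of orbits of the full conjugated diagonal $C\,Diag(2,K_a)\,C^{-1}=\Fix_{\SL(2,K_a)}(\{\xi_-,\xi_+\})$ on $\partial T_F$ by $4$, and then shows, via an explicit determinant/Galois calculation, that the elements of this ambient group which carry $\partial T_F$ to itself lie in at most two $H_{\theta_a}$-cosets, giving $4\times 2=8$. Your approach is ``bottom-up'': you conjugate $H_{\theta_a}$ itself to the norm-one torus $T^1\subset Diag(2,K_a)$, compute $C^{-1}(\mathbb P^1(F))$ explicitly as a single coset $\tfrac{1}{\sqrt a}\ker N$ inside $K_a^*$ (this is where Hilbert~90 enters), and read off that the orbit set is $\ker N/(\ker N)^2$, a quotient of $K_a^*/(K_a^*)^2$.

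What each buys: the paper's argument is more elementary---it avoids Hilbert~90 and the explicit parametrisation of $C^{-1}(\mathbb P^1(F))$---at the cost of the looser constant $8$ and a somewhat ad hoc coset analysis. Your argument is cleaner and sharper, and the identification $\mathbb P^1(F)\cong K_a^*/F^*\cong \ker N$ makes transparent why the answer is governed by square classes; in particular your observation that $F^*\cap (K_a^*)^2=(F^*)^2\cup a(F^*)^2$ immediately pins the orbit count to exactly $2$, which is stronger than what the paper states or needs.
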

\begin{proof}
Take $K_a:=F(\sqrt{a})$ to be the quadratic field extension of $F$ corresponding to $a$. By Corollary \ref{lem::geom_k_inv_SL_2} we know that $H_{\theta_a}$ fixes pointwise the ends  $\xi_{\pm} := \big[ \begin{smallmatrix} 1 \\ \pm \sqrt{a} \end{smallmatrix} \big] $ in the boundary $\partial T_{K_a} - \partial T_F$, and $H_{\theta_a} = \Fix_{\SL(2,K_a)}(\{\xi_{-},\xi_{+}\}) \cap \SL(2,F)$. 

 The subgroup $\Fix_{\SL(2,K_a)}(\{\xi_{-},\xi_{+}\})$ is  $\SL(2,K_a)$-conjugate to $\Diag(2,K_a)$ \editA{where $C= \left(\begin{smallmatrix} 1 &-\frac{1}{\sqrt{a}} \\ \sqrt{a} &1 \end{smallmatrix} \right)$:}
\editA{$$\Fix_{\SL(2,K_a)}(\{\xi_{-},\xi_{+}\}) = C \Diag(2,K_a)C^{-1}=\edit{\left\{\frac{1}{2} \left(\begin{matrix} b+\frac{1}{b} &\frac{1}{\sqrt{a}}(b-\frac{1}{b}) \\ \sqrt{a} (b-\frac{1}{b})&b+\frac{1}{b} \end{matrix} \right)\; \vert \; b \in K_a^{*} \right\}}.$$}
\editA{By applying Lemma \ref{lem::nr_orbits_diag_SL}, (2), and since $|K_a^{*}/(K_a^{*})^{2}|=4$} we know that there are at most $4$ orbits of $\Fix_{\SL(2,K_a)}(\{\xi_{-},\xi_{+}\})$ \edit{in} the boundary  $\partial T_F \subset \partial T_{K_a} - \{\big [ \begin{smallmatrix} 1 \\ \pm \sqrt{a} \end{smallmatrix} \big] \}$.  Note the only elements in $\Diag(2,K_a)$ fixing $\xi \in  \partial T_{K_a} - \{\big[ \begin{smallmatrix} 1 \\ 0 \end{smallmatrix} \big], \big[ \begin{smallmatrix} 0 \\ 1 \end{smallmatrix} \big] \}$ are $\pm \Id.$
\edit{Notice} an element $h \in \Fix_{\SL(2,K_a)}(\{\xi_{-},\xi_{+}\})$ fixes an end  $\xi \in \partial T_{K_a} - \{\big[ \begin{smallmatrix}  1 \\ \pm \sqrt{a} \end{smallmatrix} \big] \}$ if and only if $h = \pm \Id$. %Indeed, this follows from the fact that  

Suppose now that there is $h \in \Fix_{\SL(2,K_a)}(\{\xi_{-},\xi_{+}\}) - H_{\theta_a}$, and $\xi_1 \neq \xi_2 \in \partial T_F$ such that $h(\xi_1)=\xi_2$. Then by applying $\theta_a$ to the latter equality \editA{and using $\theta_a (h(\xi))= h( \theta_a(\xi))$}, we also get $h(\theta_a(\xi_1))=\theta_a(\xi_2)$.  Because the only ends in $\partial T_{K_a}$ fixed by $\theta_a$ are  $\big[ \begin{smallmatrix}  1 \\ \pm \sqrt{a} \end{smallmatrix} \big]$ we have that $\xi_1 \neq \theta_a(\xi_1)$ and $\xi_2 \neq  \theta_a(\xi_2)$. Consider \edit{some} representatives $x_1,x_2$ of $\xi_1,\xi_2$ in $F^2$, respectively. Then there is some $w = w_1+ \sqrt{a} w_2 \in K_a^{*}$, with $w_1,w_2 \in F$, such that $h(x_1)=wx_2$, and so $h(\theta_{a}(x_1))=w\theta_{a}(x_2)$. \edit{Both matrices} $(x_1,\theta_a(x_1))$ and $(wx_2, w\theta_a(x_2))$ are invertible, the first with entries in $F$. Then $h (x_1,\theta_a(x_1))= w(x_2,\theta_a(x_2))$, and so 
$$h =w(x_2,\theta_a(x_2))(x_1,\theta_a(x_1))^{-1} \in \SL(2,K_a)$$ 
with $(x_2,\theta_a(x_2)) (x_1,\theta_a(x_1))^{-1} \in \GL(2,F)$. 
By taking the determinant in the latter equality, we have $w^2= w_1^{2}+aw_2^{2} + 2w_1w_2\sqrt{a} \in F^{*}$. This implies that either $w_2 =0$ or $w_1=0$.  If $w_2=0$ then $h \in \SL(2,F)$, which is a contradiction with our assumption that $h \notin H_{\theta_a}$. If $w_1=0$, then $w= w_2 \sqrt{a}$, \editA{and since $h \in \sqrt a \GL(2,F)$ then}
$$h \in \sqrt{a} \GL(2,F) \cap C \Diag(2,K_a)C^{-1}.$$

From the matrix form of $C \Diag(2,K_a)C^{-1}$, we have that  $h= \sqrt{a} \left(\begin{smallmatrix} b &c \\ ac&b \end{smallmatrix} \right)$, with $b,c \in F$ such that $b^2 -ac^2 = \frac{1}{a}$. If there exists such a solution \editA{$b+\sqrt{a}c$, with $b,c\in F$,} to the latter equation, then any other solution is of the form $(b+ \sqrt{a}c)(x+\sqrt{a}y)$, with $x,y \in F$ such that $x^2-a y^2 = 1$. \editA{Indeed, if $b_1^2 -ac_1^2 = \frac{1}{a}=b_2^2 -ac_2^2 $, then $(b_1+ \sqrt{a}c_1)(b_2+ \sqrt{a}c_2)^{-1}=:(x+ \sqrt{a}y)$ \editB{with} $x^2-a y^2 = 1$.} Thus \editA{by Corollary \ref{lem::geom_k_inv_SL_2}}, $h \in \sqrt{a} \left(\begin{smallmatrix} b &c \\ ac&b \end{smallmatrix} \right) \edit{H_{\theta_a}}$, with $b^2 -ac^2 = \frac{1}{a}$.

We have proved that the only elements $h \in \Fix_{\SL(2,K_a)}(\{\xi_{-},\xi_{+}\})$ that can  act nontrivially on ends of $\partial T_F$ are \edit{in} $H_{\theta_a}$ or belong to the coset  $\sqrt{a} \left(\begin{smallmatrix} b &c \\ ac&b \end{smallmatrix} \right) \edit{H_{\theta_a}}$, with $b^2 -ac^2 = \frac{1}{a}$. This implies  there are at most $8$ $H_{\theta_a}$-orbits in $\partial T_F$. This proves the lemma.
\end{proof}

Let  $E=\QQ_p(\alpha)$ be a quadratic extension of $\QQ_p$. Consider any abstract involution $\theta = \iota_A \circ \sigma$ of $\SL(2,E)$ as in Theorem \ref{thm::inv_SL_E}, where $A \in \{\left( \begin{smallmatrix} z &y \\ 1 & -\sigma(z) \end{smallmatrix} \right),  \left( \begin{smallmatrix} x &0 \\ 0 & 1 \end{smallmatrix} \right)\} \subset \GL(2,E)$, with $y \in \QQ_p, z,x\in E$, with $z\sigma(z)+ y \neq 0$ and $x\sigma(x)=1$. Let  $H_{\theta}:=\{g \in \SL(2,E) \; \vert \; \theta(g) = g\}$ and suppose $H_{\theta}=B \SL(2,K^{\sigma}) B^{-1} \cap \SL(2,E)$, for some matrix  $B \in \GL(2,K)$, where $K \neq E$ is some finite field-extension of $E$ and $K^{\sigma}:=\{x \in K \; \vert \; \sigma(x)=x\}$ is the maximal subfield of $K$ with that property.

Are there infinitely or finitely many $H_{\theta}$-orbits on the boundary $\partial T_E$? We give a partial answer in the following remark.

\begin{remark} There may be an infinite number of $H_{\theta}$-orbits on the boundary $\partial T_E$.  To see this, note:
By Lemma \ref{lem::trivial_boundary_inv_B} we know that $B (\partial T_{K^{\sigma}}) \cap \partial T_E = \{\emptyset \}$. Notice that $K$ is a quadratic extension of $K^{\sigma}$, i.e. $K=K^{\sigma}(\alpha)$. By Lemma \ref{lem::nr_orbits_SL(F)_SL(E)} there are at most $5$ $\SL(2,K^{\sigma})$-orbits on the boundary $\partial T_K$. By conjugating the groups $\SL(2,K), \SL(2,K^{\sigma})$ with the matrix $B\in \GL(2,K) $, one easily deduces  there are at most $5$ $B\SL(2,K^{\sigma})B^{-1}$-orbits on the boundary $\partial T_K$.

If we use the same computational trick as in Lemma \ref{lem::nr_orbits_H_SL} one can see that the number of $H_{\theta}$-orbits on the boundary $\partial T_E$ might  be infinite. Indeed, suppose there is $h \in B\SL(2,K^{\sigma})B^{-1} - H_{\theta}$ and $\xi_1,\xi_2 \in \partial T_E$ such that $h(\xi_1)=\xi_2$. By taking representatives $x_1,x_2$ of $\xi_1,\xi_2$ in $E^2$, respectively, we will have that $h(x_1)=wx_2$, for some $w \in K$. Then apply the involution $\theta$ and get $h(\theta(x_1))= \sigma(w)\theta(x_2)$. Since $B (\partial T_{K^{\sigma}}) \cap \partial T_E = \{\emptyset \}$, we know that $\theta(\xi_i)\neq \xi_i$ and so the the matrices $(x_i,\theta(x_i))$ are invertible and in $\GL(2,E)$. So $h = (x_2,\theta(x_2)) \left( \begin{smallmatrix} w &0 \\ 0 & \sigma(w) \end{smallmatrix} \right)  (x_1,\theta(x_1))^{-1} \in \SL(2,K)$ and by taking the determinant we must have $w\sigma(w) \in E$ and thus  $w\sigma(w) \in \QQ_p^{*}$, where $w \in K$. By considering squares in $\QQ_p^{*}$, we restrict to the case when $w\sigma(w) \in \QQ_p^{*}/( \QQ_p^{*})^{2}$ with $w \in K$. The latter has infinitely many solutions over $K$ and so there might be an infinite number of left $H_{\theta}$-cosets in $ B\SL(2,K^{\sigma})B^{-1}$, implying a possibly infinite number of $H_{\theta}$-orbits in $\partial T_E$.
\end{remark}

\medskip
To prove our results we need a polar decomposition of $\SL(2,E)$ with respect to $\SL(2,F)$. We cannot apply directly \cite{BenoistOh} as their result is proven only for $k$-involutions and not for abstract involutions. We will keep the notation $\theta_a$ \editA{for inner conjugation by $\left (\begin{smallmatrix}0 & 1 \\ a& 0 \end{smallmatrix} \right)$} from Corollary \ref{lem::geom_k_inv_SL_2}, and $\theta$ will refer to the involutions from Theorem \ref{thm::inv_SL_E_2}(2). Here we give a general result  which covers $k$-involutions as well as abstract involutions for  $\SL(2,k)$ where $k$ is a non-Archimedean local field of characteristic 0. \editA{Recall that intuitivley we want to decompose $G$ as the product of $H$ with a set `perpendicular' to $H$ with respect to the corresponding involution.}

\begin{proposition}[The polar decomposition for various subgroups of $\SL_2$]
\label{prop::polar_decom}
Let $F$ be a finite field-extension of $\QQ_p$, $E=F(\alpha)$ be a quadratic extension of $F$, \editA{and $a\in F^{*}/(F^{*})^{2}$, $a\neq 1$}. Let $H \leq G$ be \edit{one of the pairs} 
$$(G,H) \in \{(\SL(2,F),H_{\theta_a}), (\SL(2,E),\SL(2,F)), (\SL(2,\QQ_p(\alpha)),H_{\theta})\}.$$
Denote by $T_G$ the  Bruhat--Tits tree of $G$ and by $T_H$ the (possibly finite)  $H$-invariant subtree of $T_G$. Let $I_H$ be the number of $H$-orbits in the ideal boundary $\partial T_G -\partial T_H$, and $\xi_i$ a representative in each such orbit. Let $x_0 \in T_G \cap T_H$ be a vertex, which for the pair $ (\SL(2,E),\SL(2,F))$ will be taken to be the point $0$ as in the \edit{Figures} \ref{fig::fig_unram}, \ref{fig::fig_ram}.
Then 
$$G= \mathcal{K}\mathcal{B}H$$ where $\mathcal{K}$ is a compact subset of $G$ \editA{that depends on $x_0$}, and $\mathcal{B}= \{\Id\} \bigsqcup\limits_{i\in I_H} A_i$, where $A_i:= \{a_i^{n} \; \vert \; n \in \ZZ\}$ with $a_i \in G$ a hyperbolic element of translation length $2$ and with attractive endpoint in the $H$-orbit of $\xi_i$.
\end{proposition}
\begin{proof}
Let us make first some useful remarks. As the group $\SL(2)$ over a non-Archimedean local field acts by type-preserving automorphisms, thus edge-transitively, on its Bruhat--Tits tree, our groups $G,\editA{H=\SL(2,F)}$ will do the same on $T_G$, $T_H$, respectively. Denote by $V$ a fundamental domain of $H$ acting on $T_H$, \editA{which contains the vertex, $x_0 \in V$.} If $H= H_{\theta}$, or $H= H_{\theta_a}$, with $a \neq 1$, then $T_H$ is a finite subtree, and we can just consider $V=T_H$. In those cases the ideal boundary $\partial T_H$ is $\{\emptyset\}$. If $H=\SL(2,F)$ or $H= H_{\theta_a}$, with $a=1$, then $T_H$ is the Bruhat--Tits tree of $\SL(2,F)$, or a bi-infinite geodesic line in $T_G$, respectively. In those two cases the fundamental domain $V$ is an edge in $T_H$. Moreover, \editA{for those two cases,} the boundary $\partial T_H$ is the projective space $P^{1}F$, and  two endpoints of $\partial T_E$, respectively. Notice that for the case when  $G= \SL(2,E)$ and $H=\SL(2,F)$, the edge $V$ is either an edge of $T_E$ (for $E$ unramified), or the union of two consecutive edges of $T_E$ (for $E$ ramified).  

For each of the $H$-orbits in $\partial T_G - \partial T_H$, thus for each $i \in I_H$, we can choose a representative $\xi_i \in \partial T_G - \partial T_H$ such that its projection  $x_i$ on the tree $T_H$ is in the fundamental domain $V$. Then  $x_i \in V$ (which is viewed as a subset of $T_G$), but \edit{is} not \edit{necessarily} a vertex (e.g. this is the case for $G= \SL(2,E), H=\SL(2,F)$ and  $E=F(\alpha)$ a ramified extension of $F$). 

This means that the geodesic ray $[x_i,\xi_i) \subset T_G$ that starts from $x_i \in  V$ and  with endpoint $\xi_i$ is entirely disjoint from the tree $T_{H}$, except its basepoint $x_i$. Denote by $a_i$ a hyperbolic element of $G$ with translation length $2$, translation axis containing the geodesic ray $[x_i, \xi_i)$, and attracting endpoint $\xi_i$. Such an element exists by the well-known properties of the group $\SL(2)$  over a non-Archimedean local field,  and its corresponding Bruhat--Tits tree with the associated ideal boundary.

Let $g \in G$. If $g^{-1}(x_0) \in T_H$, there is $h \in H $ such that $hg^{-1}(x_0)\in V$.

If $g^{-1}(x_0) \in T_{G} \setminus T_{H}$, then let $y \in T_H$ be the projection of $g^{-1}(x_0)$ on the tree $T_{H}$; this projection $y$ is unique. Then again there is $h \in H $ such that $h(y) \in V$, and the geodesic $h([y,g^{-1}(x_0)])$ is disjoint from $T_{H}$, except the point $h(y)$. By left multiplying by  an element $h'$ in the $H$-stabilizer of $h(y) \in V \subset T_{H}$, we can suppose that $h'h([y,g^{-1}(x_0)]) \subset [x_i,\xi_i)$ for some $i \in I_H$. As we acted with type-preserving elements (i.e. $h,h',g$ are all type-preserving), there is $n\geq 1$ such that $a_i^{-n}h'hg^{-1}(x_0) \in V$. 

Let \editA{$\mathcal{K}$ be set of all elements in $G$ that  send the vertex $x_0$ to one of vertices of $V\subset T_G$}.  Notice $\mathcal{K}$ is a compact subset of $G$. Then for both cases $hg^{-1}(x_0) \in V$, \edit{resp.  $a_i^{-n}h'hg^{-1}(x_0) \in V$}, we have that  $hg^{-1} \in \mathcal{K}$,  \edit{ resp. $a_i^{-n}h'hg^{-1} \in \mathcal{K}$.} This implies that $g^{-1} \in H\mathcal{B}\mathcal{K}$ and thus $ g \in \mathcal{K}\mathcal{B}H$ as required. 
\end{proof}

\begin{remark}  Notice Theorem \ref{prop::polar_decom} gives $\mathcal{B}$ as a union of a possibly infinite number of \edit{$A_i$'s} that are pairwise non  $H$-conjugate. Following results of \cite{Hel_k_invol, HelWD}, the polar decomposition $\mathcal{K}\mathcal{B}H$ of \cite{BenoistOh} for $k$-involutions  has a finite number of such \edit{$A_i$'s} in the union $\mathcal{B}$.  However, in the next section we use Lemmas \ref{lem::nr_orbits_SL(F)_SL(E)}, \ref{lem::nr_orbits_diag_SL}, \ref{lem::nr_orbits_H_SL} for the pairs $(G,H) \in \{(\SL(2,F),H_{\theta_a}), (\SL(2,E),\SL(2,F))\}$ which give us a finite number of \edit{$A_i$'s} in our decomposition for $\mathcal{B}$. 
\end{remark}

\section{Chabauty Limits of $\SL(2, F)$ inside $\SL(2, E)$ for quadratic $E/F$}\label{sec:chab_quad}%%%%%%%%%%%%%%%%%%%%%%%%%%%%%

We will use the notation and conventions from Section \ref{sec:background}. Let $F$ be a finite field-extension of $\QQ_p$ and $E$ be any quadratic extension of $F$. \editA{The groups $B^\pm$ will be used in this section and the next. }

%Let $F$ be a finite field-extension of $\QQ_p$ and $E$ be any quadratic extension of $F$. Let $k_F, k_E$ be the residue fields of $F,E$, respectively,  and $\omega_F, \omega_E$ be  uniformizers of $F,E$, respectively. Recall $k_F^{*}/(k_F^{*})^{2} =\{1, S\}$, for some non-square $S \in k_F^{*}$. Then $F^{*}/(F^{*})^{2} =\{1, \omega_F, S, S\omega_F\}$, and $E$ can only be one of the fields $F(\sqrt{\omega_F})$, $F(\sqrt{S \omega_F})$ (the ramified extensions and $\omega_F \neq \omega_E$), or $F(\sqrt{S})$ (the unramified extension and $\omega_F = \omega_E$). For a valuation on $E$ we choose the unique valuation $\vert \cdot \vert_E$ that extends the given valuation $\vert \cdot \vert_F$ on $F$. Choose $\alpha \in \{\sqrt{\omega_F}, \sqrt{S}, \sqrt{S\omega_F}\}$ and so $E=F(\alpha)$. Notice each element $x \in E$ can be uniquely written as $x = a+b \alpha$, with $a,b \in F$. For the ramified extensions we can consider $\omega_E^2 = \omega_F$.

The stabilizer in  $\SL(2, F)$, resp. $\SL(2, E)$, of the endpoint  $\big[ \begin{smallmatrix} 1 \\0 \end{smallmatrix}\big] $ is the Borel subgroup 
\edit{ $$B_F^{+}:=\left \{ \left( \begin{smallmatrix} a &b \\ 0 &a^{-1}\end{smallmatrix} \right) \;|\; b\in F, a \in F^{\times} \right \}, \qquad \text{resp. } B_E^{+}:=\left \{ \left( \begin{smallmatrix} x &y \\ 0 &x^{-1}\end{smallmatrix} \right) \;|\;  y\in E, x \in E^{\times} \right\}. $$}
The stabilizer in  $\SL(2,F)$, resp. $\SL(2, E)$, of the endpoint  $\big[ \begin{smallmatrix} 0 \\1 \end{smallmatrix}\big] $  is the opposite Borel subgroup 
\edit{ $$B_F^{-}:=\left \{ \left( \begin{smallmatrix} a &0 \\  b&a^{-1}\end{smallmatrix} \right)\;|\;  b\in F, a \in F^{\times} \right \} , \qquad \text{resp. }  B_E^{-}:= \left \{ \left( \begin{smallmatrix} x &0\\ y &x^{-1}\end{smallmatrix} \right) \;|\;  y\in E, x \in E^{\times} \right\}. $$}

Recall from Lemma \ref{lem::nr_orbits_SL(F)_SL(E)} there are at most $5$ $\SL(2,F)$-orbits on the boundary $\partial T_{E}\cong P^1E=E \cup \{\infty\}$: the $\SL(2,F)$-orbit of  $\big[ \begin{smallmatrix} 1 \\0 \end{smallmatrix}\big] $, and the $\SL(2,F)$-orbits of $\big[ \begin{smallmatrix} 1 \\m \end{smallmatrix}\big] $, for each $m \in F^{*}/(F^{*})^{2}$,  that might coincide for two different $m$'s. By the polar decomposition from Proposition \ref{prop::polar_decom} applied to the pair $(G,H)=(\SL(2,E), \SL(2,F))$   and from Lemma \ref{lem::nr_orbits_SL(F)_SL(E)}, the set $\mathcal{B}$ is a finite union of $A_i$. Since we want to compute the Chabauty limits of $\SL(2,F)$ inside $\SL(2,E)$, \editA{using the polar decomposition $ \mathcal{K}\mathcal{B}H$ and the fact $\mathcal{K   }$-conjugation will only rotate $\SL(2,F)$,} it is enough to compute the Chabauty limits of $\SL(2,F)$ under conjugation by a sequence of elements from some fixed $A_i \subset \mathcal{B}$. Because we want to choose a group $A_i \subset \mathcal{B}$ such that the corresponding computations will be easier,  we rotate $\SL(2,F)$ in such a way that the chosen $A_i$ is generated by the diagonal matrix \edit{ $\left \{\left( \begin{smallmatrix} w_{E}& 0 \\  0 & w_E^{-1} \end{smallmatrix}  \right)\right\}$} of $\SL(2,E)$, which is a hyperbolic element of translation length $2$ along the  bi-infinite geodesic line  $( \big[ \begin{smallmatrix} 0 \\1 \end{smallmatrix}\big] ,   \big[ \begin{smallmatrix} 1 \\0 \end{smallmatrix}\big]  ) \subset T_E$. Such a rotation will  affect the  Chabauty limits of $\SL(2,F)$ only up to $\SL(2,E)$-conjugation.  

We apply this idea and choose the two endpoints  $\big[ \begin{smallmatrix}  1 \\  \alpha \end{smallmatrix} \big], \big[ \begin{smallmatrix} 1 \\ 2 \alpha \end{smallmatrix} \big] \in \partial T_E- \partial T_F$.  Notice 
 \[ \left( \begin{matrix} 1& 0\\ \alpha &1\end{matrix}  \right)  \left[ \begin{matrix} 1 \\0 \end{matrix}\right] = \left[ \begin{matrix} 1 \\\alpha \end{matrix}\right] \qquad \textrm{and} \qquad \left( \begin{matrix} (2\alpha)^{-1}& 1 \\  0 & 2\alpha \end{matrix}  \right) \left[ \begin{matrix} 0 \\1 \end{matrix}\right] = \left[ \begin{matrix} 1 \\2 \alpha \end{matrix}\right].\] 

We conjugate $\SL(2,F)$ by $\left( \begin{smallmatrix} (2\alpha)^{-1}& 1 \\  0 & 2\alpha \end{smallmatrix}  \right) \left( \begin{smallmatrix} 1& 0\\ \alpha &1\end{smallmatrix}  \right)=  \left( \begin{smallmatrix} (2\alpha)^{-1} + \alpha& 1 \\  2\alpha^2 & 2\alpha \end{smallmatrix}  \right)$. We obtain
\begin{equation*}
\begin{split}
&H:=\left( \begin{smallmatrix} (2\alpha)^{-1} + \alpha& 1 \\  2\alpha^2 & 2\alpha \end{smallmatrix}  \right) \SL(2,F) \left( \begin{smallmatrix} 2\alpha& -1 \\ - 2\alpha^2 & \alpha+ (2\alpha)^{-1} \end{smallmatrix}  \right)=\\
&= \edit{\left \{ \left( \begin{smallmatrix} a +2a\alpha^2 -2d \alpha^2 + \alpha(2c-b-2b\alpha^2)&\;\; \left((2\alpha)^{-1} + \alpha\right) (b((2\alpha)^{-1} + \alpha)+d-a)-c\\ 4c\alpha^2 -4b\alpha^4 + \alpha(4 a\alpha^2-4d\alpha^2) & -2a \alpha^2 +2d\alpha^2 +d +\alpha(b+2b\alpha^2-2c) \end{smallmatrix} \right) \;\vert\; a,b,c,d \in F, \; ad-bc=1 \right \}},
\end{split}
\end{equation*}
the subtree $ T_H:=\left( \begin{smallmatrix} (2\alpha)^{-1} + \alpha& 1 \\  2\alpha^2 & 2\alpha \end{smallmatrix}  \right) T_F \subset T_E$ and its ideal boundary $ \partial T_H:= \left( \begin{smallmatrix} (2\alpha)^{-1} + \alpha& 1 \\  2\alpha^2 & 2\alpha \end{smallmatrix}  \right) \partial T_F$ are invariant under $H$. %and its corresponding ideal boundary is $ \partial T_H:= \left( \begin{smallmatrix} (2\alpha)^{-1} + \alpha& 1 \\  2\alpha^2 & 2\alpha \end{smallmatrix}  \right) \partial T_F$. 
It is easy to see that the endpoints  $\big[ \begin{smallmatrix} 0 \\1 \end{smallmatrix}\big] $ and  $\big[ \begin{smallmatrix} 1 \\0 \end{smallmatrix}\big] $  are not in $\partial T_H$, and the bi-infinite geodesic line  $(  \big[ \begin{smallmatrix} 0 \\1 \end{smallmatrix}\big]  ,\big[ \begin{smallmatrix} 1 \\0 \end{smallmatrix}\big]  ) \subset T_E$ intersects the tree $T_H$ either in a vertex, or an edge of $T_E$, depending on whether the extension $E$ is unramified or ramified, respectively.

Let us recall a version of Hensel's Lemma for finite extensions of $\QQ_p$ that will be used below.
\begin{lemma}[Hensel]
 \label{Hensel}
Let $F$ be a finite field-extension of $\QQ_p$. Let $f(X)$ be a polynomial with coefficients in $\mathcal{O}_F$. Suppose there is some $a \in \mathcal{O}_F$ that satisfies:
$$\vert f(a) \vert_F < \vert f'(a)\vert_F^{2}.$$
Then there is a unique $x \in \mathcal{O}_F$ such that $f(x)=0$ in $\mathcal{O}_F$ and $\vert x-a\vert_F < \vert f'(a)\vert_F$. 
\end{lemma}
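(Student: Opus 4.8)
The plan is to prove this by Hensel's classical method of successive approximations (Newton iteration) in the ultrametric setting, followed by a uniqueness argument based on the first-order Taylor expansion. The first step is to record the purely algebraic input: since $f \in \mathcal{O}_F[X]$, the Taylor expansions
\[ f(X+Y) = f(X) + f'(X)\,Y + Y^2\, h(X,Y), \qquad f'(X+Y) = f'(X) + Y\, \ell(X,Y) \]
hold with $h,\ell \in \mathcal{O}_F[X,Y]$, because the divided derivatives $f^{(k)}/k!$ of a polynomial over $\mathcal{O}_F$ again have coefficients in $\mathcal{O}_F$. Hence, when evaluated at arguments in $\mathcal{O}_F$, the ``remainder factors'' $h$ and $\ell$ have absolute value at most $1$. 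Note also that $|f(a)|_F\le 1$ and $|f'(a)|_F\le 1$ since $a\in\mathcal{O}_F$.

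Next I would set $c := |f(a)|_F/|f'(a)|_F^2$, which is $<1$ by hypothesis, define $a_0 := a$ and $a_{n+1} := a_n - f(a_n)/f'(a_n)$, and establish by induction on $n$ the package: (i) $a_n \in \mathcal{O}_F$; (ii) $|f'(a_n)|_F = |f'(a)|_F$; (iii) $|f(a_n)|_F \le |f'(a)|_F^2\,c^{2^n}$; (iv) $|a_{n+1} - a_n|_F \le |f'(a)|_F\,c^{2^n}$. For the inductive step, plug $X = a_n$, $Y = a_{n+1}-a_n = -f(a_n)/f'(a_n)$ into the first identity; the linear terms cancel by the choice of $a_{n+1}$, leaving $f(a_{n+1}) = (a_{n+1}-a_n)^2\,h(\ldots)$, which upgrades (iv) into (iii) at stage $n+1$. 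For (ii), the second identity gives $|f'(a_{n+1}) - f'(a_n)|_F \le |a_{n+1}-a_n|_F < |f'(a_n)|_F$, so the ultrametric inequality forces $|f'(a_{n+1})|_F = |f'(a_n)|_F$; and then (iv) at stage $n+1$ is immediate from the stage-$(n+1)$ versions of (ii) and (iii). Finally (i) holds because $|a_{n+1}-a_n|_F \le |f'(a)|_F\,c^{2^n}\le 1$.

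Estimate (iv) shows $\{a_n\}$ is Cauchy with $|a_n - a_0|_F \le c\,|f'(a)|_F$ for all $n$; since $\mathcal{O}_F$ is complete (it is closed in the complete field $F$), the sequence converges to some $x \in \mathcal{O}_F$, and continuity of $f$ together with (iii) gives $f(x) = 0$, while $|x - a|_F \le c\,|f'(a)|_F < |f'(a)|_F$, as required. For uniqueness, suppose $y \in \mathcal{O}_F$ satisfies $f(y) = 0$ and $|y - a|_F < |f'(a)|_F$. The expansion of $f'$ gives $|f'(y)|_F = |f'(x)|_F = |f'(a)|_F$ (using $|y-a|_F, |x-a|_F < |f'(a)|_F$), hence $|y - x|_F \le \max(|y-a|_F, |x-a|_F) < |f'(x)|_F$. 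Putting $X = x$, $Y = y-x$ into the first identity and using $f(x) = f(y) = 0$ yields $(y-x)\bigl(f'(x) + (y-x)\,h(x,y-x)\bigr) = 0$; if $y \ne x$ this forces $|f'(x)|_F = |y-x|_F\,|h(\ldots)|_F \le |y-x|_F < |f'(x)|_F$, a contradiction, so $y = x$.

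The delicate point --- and the only place where the strict gain $|f(a)|_F < |f'(a)|_F^2$ (as opposed to the weaker $|f(a)|_F < |f'(a)|_F$) is genuinely exploited --- is keeping invariant (ii) alive: the fact that $|f'(a_n)|_F$ never drops is exactly what makes each Newton step well defined and simultaneously produces the quadratic decay $c^{2^n}$ that drives convergence, and the same observation reappears in the uniqueness step. Everything else is routine ultrametric bookkeeping with the integral Taylor coefficients.
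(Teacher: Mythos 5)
Your Newton-iteration argument is correct and complete: the integral Taylor expansions, the induction package (i)--(iv) with the quadratic decay $c^{2^n}$, the completeness of $\mathcal{O}_F$, and the ultrametric uniqueness step all check out. The paper itself offers no proof of this lemma --- it merely recalls it as a standard fact about finite extensions of $\QQ_p$ --- and your proof is exactly the classical successive-approximation argument one finds in the standard references, so there is nothing further to compare.
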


%Take a sequence of matrices $\{\left( \begin{smallmatrix} a_n& b_n \\  c_n & d_n \end{smallmatrix}  \right)\}_{n\geq 1} \subset  \SL(2,F)$ and the diagonal matrices $\{\left( \begin{smallmatrix} w_E^n& 0 \\  0 & w_E^{-n} \end{smallmatrix}  \right)\}_{n\geq 1} $ that are hyperbolic elements of  $\SL(2,E)$.

\begin{proposition}\label{Chab_limits_SL(2,F)} 
Take a sequence of matrices $\{\left( \begin{smallmatrix} a_n& b_n \\  c_n & d_n \end{smallmatrix}  \right)\}_{n\geq 1} \subset  \SL(2,F)$ and the diagonal matrices \edit{ $\left \{\left( \begin{smallmatrix} \omega_E^n& 0 \\  0 & \omega_E^{-n} \end{smallmatrix}  \right) \right\}_{n\geq 1} $} that are hyperbolic elements of  $\SL(2,E)$. Then 
any limit $ \editA {\left (\begin{smallmatrix}A & B \\ C&D \end{smallmatrix} \right) \in}\SL(2,E)$ of  $$\left( \begin{smallmatrix} a_n +2a_n\alpha^2 -2d_n \alpha^2 + \alpha(2c_n-b_n-2b_n\alpha^2) &\;\; \omega_E^{2n} \left[ \left((2\alpha)^{-1} + \alpha\right) (b_n((2\alpha)^{-1} + \alpha)+d_n-a_n)-c_n \right] \\  \omega_E^{-2n} \left[4c_n\alpha^2 -4b_n\alpha^4 + \alpha(4 a_n\alpha^2-4d_n\alpha^2)\right] & -2a_n \alpha^2 +2d_n\alpha^2 +d_n +\alpha(b_n+2b_n\alpha^2-2c_n) \end{smallmatrix} \right)$$
is of the form $\left( \begin{smallmatrix} a-\alpha b& 0 \\  z &  a+\alpha b \end{smallmatrix} \right)$, with $a,b \in F$ and $a^2 - \alpha^2 b^{2}=1$, $z \in E$. In particular, $z$ can take any value of $E$,  and any solution $a,b \in F$ of the equation $a^2 - \alpha^2 b^{2}=1$ can appear.
\end{proposition}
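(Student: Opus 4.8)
The plan is to track, entry by entry, what happens to the conjugated matrix under the diagonal scaling $\left( \begin{smallmatrix} \omega_E^{2n} & \\ & \omega_E^{-2n} \end{smallmatrix}\right)$ applied to the off-diagonal entries, and to see which sequences of $(a_n,b_n,c_n,d_n)\in\SL(2,F)$ give rise to bounded (hence convergent-after-subsequence) entries. First I would observe that the $(1,2)$-entry carries a factor $\omega_E^{2n}$, so for it to stay bounded we need the bracketed $F$-part, call it $\beta_n := \left((2\alpha)^{-1}+\alpha\right)\big(b_n((2\alpha)^{-1}+\alpha)+d_n-a_n\big)-c_n$, to go to $0$ at least as fast as $|\omega_E|^{2n}$; passing to a subsequence, $\omega_E^{2n}\beta_n\to z$ for any prescribed $z\in E$ that we like (this is the ``$z$ can take any value'' clause, using that $\omega_E^{2n}\beta_n$ ranges over a set whose closure is all of $E$ as $\beta_n$ ranges over $F$). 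Dually, the $(2,1)$-entry carries $\omega_E^{-2n}$, a factor blowing up, so boundedness forces the bracket $4c_n\alpha^2 - 4b_n\alpha^4 + \alpha(4a_n\alpha^2 - 4d_n\alpha^2)$ to be eventually $0$, i.e. both $c_n - b_n\alpha^2 = 0$ and $a_n - d_n = 0$ for large $n$; since $\omega_E^{-2n}\cdot 0 = 0$, the $(2,1)$-entry of any limit is forced to lie in $E$ but I must be careful here — the point is that after the constraints it is identically $0$ along the tail, so the limit's $(2,1)$-entry is the limit of something that need not be zero. Here I would instead argue more carefully: boundedness of the $(2,1)$-entry does not force the bracket to vanish, only to be $O(|\omega_E|^{2n})$; but the bracket lies in the discrete set $\{x_1 + \alpha x_2 : x_i \in F\}$ intersected with a shrinking ball, so for $n$ large it is exactly $0$, giving $c_n = b_n\alpha^2$ and $a_n = d_n$; but $c_n, b_n\alpha^2 \in F$ forces $b_n = 0$ hence $c_n = 0$, so the tail of the sequence lies in $\left\{\left(\begin{smallmatrix} a_n & 0 \\ 0 & a_n\end{smallmatrix}\right) : a_n^2 = 1\right\} = \{\pm\Id\}$ — which would collapse everything. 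This tells me the rotation/normalisation must be handled with care: the correct statement is that only a sub-\emph{family} of $\SL(2,F)$, not individual convergent sequences from a fixed coset, survives, and the limit group is assembled from all such.

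So the cleaner route, which I would actually carry out, is: by Proposition~\ref{prop::polar_decom} applied to $(\SL(2,E),\SL(2,F))$ together with Lemma~\ref{lem::nr_orbits_SL(F)_SL(E)}, every element of $\SL(2,E)$ is in $\mathcal{K}\mathcal{B}H$ with $\mathcal{B}$ a \emph{finite} union of cyclic groups $A_i$, so up to $\SL(2,E)$-conjugacy it suffices to compute the Chabauty limit of $H$ under conjugation by $\omega_E^{2n}$-powers of the fixed diagonal torus; Proposition~\ref{Chab_limits_SL(2,F)} is exactly this computation and its output is the candidate limit group. Concretely I would: (i) conjugate a general $g = \left(\begin{smallmatrix} a & b \\ c & d\end{smallmatrix}\right) \in H$ (so with $a,b,c,d$ the displayed $\QQ_p$-parametrised entries) by $\mathrm{diag}(\omega_E^n, \omega_E^{-n})$, getting the matrix in the proposition's display; (ii) to get a nonzero limit in the $(2,1)$-slot while keeping the $(1,1),(2,2),(1,2)$ entries bounded, let $a_n,b_n,c_n,d_n \to \infty$ in a coordinated way, using the determinant relation $a_nd_n - b_nc_n = 1$ to control growth — this is where Hensel's Lemma~\ref{Hensel} enters, to produce, for each target triple (the diagonal value $a - \alpha b$ with $a^2 - \alpha^2 b^2 = 1$, and the lower-left value $z$), an actual sequence in $\SL(2,F)$ realising it; (iii) read off that the surviving entries organise into $\left(\begin{smallmatrix} a - \alpha b & 0 \\ z & a + \alpha b\end{smallmatrix}\right)$ — note the product of the two diagonal entries is $(a-\alpha b)(a+\alpha b) = a^2 - \alpha^2 b^2 = 1$, consistent with $\SL_2$, and the upper-right must go to $0$ because its prefactor $\omega_E^{2n} \to 0$ kills any bounded $F$-quantity.

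For the key steps in order: \textbf{Step 1}, write out the conjugated matrix (already displayed) and separate each entry into its $F$-part and $\alpha$-part. \textbf{Step 2}, show the $(1,1)$ and $(2,2)$ entries converge: they are $F$-linear in $(a_n,b_n,c_n,d_n)$ with no $\omega_E$ prefactor, so I parametrise $a_n + 2a_n\alpha^2 - 2d_n\alpha^2 \to$ some limit and $2c_n - b_n - 2b_n\alpha^2 \to$ some limit, and check these two limits are, respectively, the $F$-part and ($-\alpha$-part) of a single element $a - \alpha b$ with $a^2 - \alpha^2 b^2 = 1$; the determinant condition $a_nd_n - b_nc_n = 1$ will translate, in the limit, into exactly $a^2 - \alpha^2 b^2 = 1$. \textbf{Step 3}, show the $(1,2)$ entry $\to 0$: bounded $F$-part times $\omega_E^{2n} \to 0$. \textbf{Step 4}, show the $(2,1)$ entry can converge to an arbitrary $z \in E$: given target $z$, use Hensel to solve for a sequence of $(a_n,b_n,c_n,d_n) \in \SL(2,F)$ — roughly, fix the first two rows' data to pin down the diagonal limit and use the freedom in the remaining parameter (subject to the quadratic determinant constraint, solvable by Hensel since $p \neq 2$) to make $\omega_E^{-2n}\big[4c_n\alpha^2 - 4b_n\alpha^4 + \alpha(4a_n\alpha^2 - 4d_n\alpha^2)\big] \to z$. \textbf{Step 5}, conversely, any limit is of the stated form by the boundedness arguments in Steps 2–3 plus the observation that the discreteness of $F$ inside $E$ (more precisely, of the value group) forces the allowed growth rates. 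I expect \textbf{Step 4} to be the main obstacle: one must exhibit, for \emph{every} pair (a solution $(a,b)$ of $a^2 - \alpha^2 b^2 = 1$ over $F$, and an arbitrary $z \in E$), an honest sequence in $\SL(2,F)$ whose conjugates converge to $\left(\begin{smallmatrix} a - \alpha b & 0 \\ z & a + \alpha b\end{smallmatrix}\right)$, and the bookkeeping of which parameters blow up at which $\omega_E$-rate, while respecting $\det = 1$, is delicate; Hensel's Lemma is the tool that guarantees the needed square roots / solutions exist in $\mathcal{O}_F$ once the leading-order terms are matched.
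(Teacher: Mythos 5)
Your central analytic claim in the first paragraph is wrong, and the error propagates into Step 5 of your outline. You assert that boundedness of the $(2,1)$-entry forces the bracket $4c_n\alpha^2-4b_n\alpha^4+\alpha(4a_n\alpha^2-4d_n\alpha^2)$ to be exactly $0$ for large $n$, because ``the bracket lies in the discrete set $\{x_1+\alpha x_2 : x_i\in F\}$ intersected with a shrinking ball.'' But $\{x_1+\alpha x_2 : x_i\in F\}$ is all of $E$, and $F$ is a closed, \emph{non-discrete} subfield of $E$; there is no discreteness to invoke (only the value group is discrete, which does not force vanishing). Convergence of $\omega_E^{-2n}[\cdots]$ only forces $c_n-\alpha^2 b_n\to 0$ and $a_n-d_n\to 0$ at rate $|\omega_E|^{2n}$, and these differences need never vanish exactly; indeed it is precisely their nonzero values at scale $\omega_E^{2n}$ that produce the arbitrary $z\in E$ in the $(2,1)$ slot of the limit. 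Your deduction that the tail of the sequence collapses to $\{\pm\Id\}$, and the ensuing remark that ``only a sub-family survives,'' is therefore false, and when you re-invoke ``the discreteness of $F$ inside $E$'' in Step 5 to control growth rates, that step has no valid justification as stated. The correct converse argument (the paper's) writes $a_n-d_n=\omega_E^{2n}C_{2,n}$ and $c_n-\alpha^2 b_n=\omega_E^{2n}C_{1,n}$ with $C_{i,n}$ convergent, combines this with convergence of the $(1,1)$ and $(2,2)$ entries to get individual convergence $a_n,d_n\to a$, $b_n\to b$, $c_n\to\alpha^2 b$, and then substitutes into $a_nd_n-b_nc_n=1$ to obtain $a^2-\alpha^2 b^2=1$; the $(1,2)$ entry then dies because its bracket is bounded while the prefactor $\omega_E^{2n}\to 0$. (Note also that in your first paragraph you attach the ``arbitrary $z$'' to the $(1,2)$ entry; as your own Steps 3--4 later recognise, that entry is forced to $0$ and the arbitrary $z$ lives in the $(2,1)$ entry.)

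Your ``cleaner route'' (Steps 1--4) is in outline the paper's proof --- entrywise analysis after the polar decomposition, determinant relation passing to the limit, Hensel's Lemma to realise every pair consisting of a solution $(a,b)$ of $a^2-\alpha^2b^2=1$ and a target $z\in E$ --- but the realisability step (your Step 4), which you yourself flag as the main obstacle, is left as a plan: no sequence is exhibited. The paper does it by fixing $b\in\mathcal{O}_F$ with $a^2-\alpha^2 b^2=1$, setting $b_n=b$, $c_n=\alpha^2 b+\omega_E^{2n}C_1$, $a_n=d_n+\omega_E^{2n}C_2$, and solving $f_n(X)=X^2+\omega_E^{2n}C_2X-\alpha^2 b^2-\omega_E^{2n}C_1 b-1=0$ for $d_n\in\mathcal{O}_F$ by Hensel's Lemma (using $p\neq 2$), so that the determinant is exactly $1$ and the $(2,1)$ entry converges to $4\alpha^2 C$, which sweeps out all of $E$ as $C$ does. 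Until the discreteness argument is excised and Step 4 is carried out in this explicit form, the proposal does not constitute a proof.
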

\begin{proof}
To have a limit in  $\SL(2,E)$ \editA{with respect to the Chabauty topology, by Proposition \ref{prop::chabauty_conv} we need} that in the topology \editA{on $E$} 
\begin{description}
\item[\editA{(A)}]
$\lim\limits_{n\to \infty} a_n +2a_n\alpha^2 -2d_n \alpha^2 + \alpha(2c_n-b_n-2b_n\alpha^2)= A= A_1 + \alpha A_2 \in E$
\item[\editA{(B)}]
$\lim\limits_{n\to \infty}  \omega_E^{2n} \left[ \left((2\alpha)^{-1} + \alpha \right) (b_n((2\alpha)^{-1} + \alpha)+d_n-a_n)-c_n \right]= B= B_1 + \alpha B_2  \in E$
\item[\editA{(C)}]
$\lim\limits_{n\to \infty} \omega_E^{-2n} \left[4c_n \alpha^2 -4b_n \alpha^4 + \alpha(4 a_n \alpha^2-4d_n \alpha^2)\right]= C = C_1 + \alpha C_2 \in E$
\item[\editA{(D)}]
$\lim\limits_{n\to \infty} -2a_n \alpha^2 +2d_n \alpha^2 +d_n +\alpha(b_n+2b_n \alpha^2-2c_n) = D = D_1 + \alpha D_2  \in E$
\end{description}
with all $A_i,B_i,C_i,D_i \in F$.
From \editA{(C)} above \editA{and since $\omega_E^{2} \in F$,} we have that 
$\lim\limits_{n \to \infty} \omega_E^{-2n} \left[4c_n\alpha^2 -4b_n \alpha^4 \right]= C_1$ and 
$\lim\limits_{n\to \infty} \omega_E^{-2n} \left[4 a_n \alpha^2-4d_n \alpha^2 \right]= C_2.$ 
This means that 
 \editA{\begin{equation} \label{equ::8}
\omega_E^{2n} C_{1,n}:= c_n -b_n \alpha^2 \textrm{ and } \omega_E^{2n} C_{2,n}:=a_n - d_n \textrm{ with }\lim\limits_{n\to \infty} C_{i,n}= C_i/(4 \alpha^2) \in F, 
\end{equation}}
%$$\omega_E^{2n} C_{1,n}:= c_n -b_n \alpha^2 \text{ with }\lim\limits_{n\to \infty} C_{1,n}= C_1/(4 \alpha^2) \in F,$$  
%and 
%$$ \omega_E^{2n} C_{2,n}:=a_n - d_n  \text{ with }\lim\limits_{n\to \infty}C_{2,n}= C_2/(4 \alpha^2) \in F$$
implying that 
\begin{equation}\label{zero} \lim\limits_{n\to \infty}\omega_E^{2n} C_{1,n} =  \lim\limits_{n\to \infty}(c_n -b_n \alpha^2)= 0 \textrm { and } \lim\limits_{n\to \infty}\omega_E^{2n} C_{2,n} =  \lim\limits_{n\to \infty}(a_n -d_n)= 0. \end{equation}

Moreover, adding \editA{(A) and (D)}, \editA{and then by (\ref{equ::8})} we have
\begin{equation*}
\begin{split}
\lim\limits_{n\to \infty} (a_n +2a_n \alpha^2 -2d_n \alpha^2  -2a_n \alpha^2 +2d_n \alpha^2 +d_n)&= \lim\limits_{n\to \infty} (a_n  +d_n)\\
&=  \lim\limits_{n\to \infty} (d_n +\omega_E^{2n} C_{2,n}  +d_n)\\
&= \lim\limits_{n\to \infty} 2d_n=A_1 + D_1.\\
\end{split}
\end{equation*}

Thus $a:= \lim\limits_{n\to \infty} a_n=\lim\limits_{n\to \infty} d_n =(A_1 + D_1)/2 \in F$.  From the first terms of \editA{(A) and (D)}, we  see $A_1=D_1.$

Adding the \editA{$\alpha$-terms} from \editA{(A) and (D)}, we get $$\lim\limits_{n\to \infty} (2c_n-b_n-2b_n \alpha^2  +b_n+2b_n \alpha^2-2c_n)= 0 =A_2 + D_2 \Rightarrow A_2=-D_2. $$
\editA{From $\lim\limits_{n\to \infty}(c_n -b_n \alpha^2)= 0$ in \eqref{zero}} and from the $\alpha$-term of \editA{(D)} we get that 
$$\lim\limits_{n\to \infty} b_n = D_2 \in F.$$
Thus $b:= \lim\limits_{n\to \infty} b_n =D_2 \in F$ and $\lim\limits_{n\to \infty} c_n= \alpha^2D_2= \alpha^2b\in F$.

\editA{Taking} $a_n d_n -c_n b_n=1$ and replacing $a_n = d_n +\omega_E^{2n} C_{2,n}$ and $c_n = b_n \alpha^2+\omega_E^{2n} C_{1,n}$, we get
$$d_n^2 + \omega_E^{2n} C_{2,n} d_n - \alpha^2 b_n^2 - \omega_E^{2n} C_{1,n}b_n=1 \text{ implying }$$
$$ 1=\lim\limits_{n\to \infty} d_n^2 + \omega_E^{2n} C_{2,n} d_n - \alpha^2 b_n^2 - \omega_E^{2n} C_{1,n}b_n= a^2 - \alpha^2 b^2.$$

\editA{By an easy computation using $ 1\neq \alpha^{2} \in F^{*}/(F^{*})^{2}$ and properties of the norm $\vert \cdot \vert_F$, recall} that $a^2 - \alpha^2 b^2=1$ with $a,b \in F$ implies that $a,b \in \mathcal{O}_F$ with $\vert a \vert_F=1$ and $\vert b \vert_F <1$, \editA{thus $b \in \omega_F  \mathcal{O}_F$.}

In fact, \editA{(A), (B) and (D)} above will become 
\begin{description}
\item[(i)]
$\lim\limits_{n\to \infty} a_n +2a_n \alpha^2 -2d_n \alpha^2 + \alpha(2c_n-b_n-2b_n \alpha^2)= \lim\limits_{n\to \infty} a_n - \alpha b_n= a- \alpha b= A$
\item[(ii)]
$\lim\limits_{n\to \infty}  \omega_E^{2n} \left[ \left((2 \alpha)^{-1} + \alpha \right) (b_n((2 \alpha)^{-1} + \alpha)+d_n-a_n)-c_n \right]=0 = B$
\item[(iii)]
$\lim\limits_{n\to \infty} -2a_n \alpha^2 +2d_n \alpha^2 +d_n +\alpha(b_n+2b_n \alpha^2-2c_n) = \lim\limits_{n\to \infty} d_n +\alpha b_n  =a+ \alpha b= D $.
\end{description}

We will use Hensel's Lemma \ref{Hensel} to show any element $C \in E$ can be obtained in \editA{(C)}. %Indeed, we use Hensel's Lemma \ref{Hensel}. 
Take any $C = C_1 + \alpha C_2 \in E$, and any $b \in \mathcal{O}_F$, such that $a^2 - \alpha^2 b^{2}=1$. Then for $n \geq 1$ large enough, \editA{since $\omega_E^{2} \in  \mathcal{O}_F \subset F$ and $b \in \omega_F  \mathcal{O}_F$,} we have that $ \omega_E^{2n}C_2, \alpha^2 b^2+\omega_E^{2n}C_1 b \in \omega_\editA{F} \mathcal{O}_F$. Take 
$$ f_n(X):= X^2 +\omega_E^{2n}C_2 X -  \alpha^2 b^2-\omega_E^{2n}C_1 b -1, \edit{\text{ then }}  f'_n(X) = 2X + \omega_E^{2n}C_2.$$
 Then $f_n(X) \in \mathcal{O}_F[X]$, 
 \edit{ $$f_n(1)= 1+\omega_E^{2n}C_2  - \alpha^2 b^2-\omega_E^{2n}C_1 b -1 \equiv 0 (mod \; \omega_\editA{F}), $$ 
 $$ \textrm{and } f'_n(1)=2+ \omega_E^{2n}C_2 \equiv 2 (mod \; \omega_\editA{F}).$$}
Then by Hensel's Lemma \ref{Hensel}, there is $d_n \in  \mathcal{O}_F$ such that 
\edit{$$ f_n(d_n):= d_{n}^{2} +\omega_E^{2n}C_2 d_n - \alpha^2 b^2-\omega_E^{2n}C_1 b-1=0, \textrm{ and }d_n \equiv 1  (mod \; \omega_\editA{F}).$$}
Then take $a_n = d_n +\omega_E^{2n} C_{2}$ and $c_n = b\alpha^2+\omega_E^{2n} C_{1}$, getting $a_n d_n -c_n b=1$. Thus \editA{(C)} becomes 
\edit{$$\lim\limits_{n\to \infty} \omega_E^{-2n} \left[4c_n\alpha^2 -4b\alpha^4 + \alpha(4 a_n\alpha^2-4d_n\alpha^2)\right]= 4\alpha^2 C = 4\alpha^2 (C_1 + \alpha C_2) \in E.$$}
Notice that up to extracting a subsequence and using the fact that $d_n \in \mathcal{O}_F$, we have that $\lim\limits_{n \to \infty}a_n=a$, and  $a^2 - \alpha^2 b^{2}=1$. 
\end{proof}

\edit{If we conjugate $B^{-}$ with the matrix $\left( \begin{smallmatrix} 0 &-1 \\ 1 &0\end{smallmatrix} \right)$ we get $B^{+}$, and so} we have just proved the following:
 \begin{theorem}
\label{cor::cartan_lim_SL}
Let $F$ be a finite field-extension of $\QQ_p$ and $E=F(\alpha)$ be any quadratic extension of $F$. Then  any Chabauty limit of $\SL(2,F)$ in  $\SL(2,E)$ is either an $\SL(2,E)$-conjugate of $\SL(2,F)$, or an $\SL(2,E)$-conjugate of the subgroup 
$\{\left( \begin{smallmatrix} a-\alpha b& z \\  0 &  a+\alpha b \end{smallmatrix} \right) \; \vert \;  a,b \in F \text{ with }a^2 - \alpha^2 b^{2}=1,  z \in E \} \leq B_E^{+}.$ 
\end{theorem}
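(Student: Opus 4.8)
The plan is to assemble the theorem from the polar decomposition of Proposition~\ref{prop::polar_decom} and the explicit limit computation of Proposition~\ref{Chab_limits_SL(2,F)}. First I would apply Proposition~\ref{prop::polar_decom} to the pair $(G,H)=(\SL(2,E),\SL(2,F))$, obtaining $\SL(2,E)=\mathcal{K}\mathcal{B}\,\SL(2,F)$ with $\mathcal{K}$ compact and $\mathcal{B}=\{\Id\}\sqcup\bigsqcup_{i\in I_H}A_i$, where by Lemma~\ref{lem::nr_orbits_SL(F)_SL(E)} the set $I_H$ is \emph{finite}. Now let $\Lambda$ be any Chabauty limit of $\SL(2,F)$ in $\SL(2,E)$, say $\Lambda=\lim_n g_n\,\SL(2,F)\,g_n^{-1}$. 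Writing $g_n=k_nb_nh_n$ with $k_n\in\mathcal{K}$, $b_n\in\mathcal{B}$, $h_n\in\SL(2,F)$ and using $h_n\SL(2,F)h_n^{-1}=\SL(2,F)$, we get $g_n\,\SL(2,F)\,g_n^{-1}=k_n\bigl(b_n\,\SL(2,F)\,b_n^{-1}\bigr)k_n^{-1}$. Passing to a subsequence, $k_n\to k\in\mathcal{K}$; since the conjugation action $G\times\mathcal{S}(G)\to\mathcal{S}(G)$ is jointly continuous in the Chabauty topology, $\Lambda=k\Lambda'k^{-1}$ where $\Lambda'$ is a Chabauty limit of $b_n\,\SL(2,F)\,b_n^{-1}$ with $b_n\in\mathcal{B}$. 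As $\mathcal{B}$ is a finite union of the sets $\{\Id\}$ and $A_i=\{a_i^n\mid n\in\ZZ\}$, a further subsequence lies entirely in a single one of them, so it suffices to describe the Chabauty limits of $a_i^{n}\,\SL(2,F)\,a_i^{-n}$ for one fixed $i$ (the $\{\Id\}$ case being trivial).

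Next I would split into the two regimes of the exponent along the chosen subsequence. If the exponents stay bounded, then after a further subsequence they are constant, and the limit is the fixed subgroup $a_i^{m}\,\SL(2,F)\,a_i^{-m}$, an $\SL(2,E)$-conjugate of $\SL(2,F)$ (a constant sequence of closed subgroups converges to itself). If the exponents tend to $\pm\infty$, I would use the freedom to replace $\SL(2,F)$ by an $\SL(2,E)$-conjugate, exactly as in the paragraph preceding Proposition~\ref{Chab_limits_SL(2,F)}: rotating $\SL(2,F)$ by a suitable matrix of $\SL(2,E)$ turns the relevant $A_i$ into the cyclic group generated by $\left(\begin{smallmatrix}\omega_E&0\\0&\omega_E^{-1}\end{smallmatrix}\right)$, at the cost of changing the eventual limit only up to $\SL(2,E)$-conjugacy. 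Then Proposition~\ref{Chab_limits_SL(2,F)} computes the limit precisely: every entry-wise limit of $\left(\begin{smallmatrix}\omega_E^{n}&0\\0&\omega_E^{-n}\end{smallmatrix}\right)\!H\!\left(\begin{smallmatrix}\omega_E^{-n}&0\\0&\omega_E^{n}\end{smallmatrix}\right)$ (with $H$ the rotated copy of $\SL(2,F)$) has the form $\left(\begin{smallmatrix}a-\alpha b&0\\z&a+\alpha b\end{smallmatrix}\right)$ with $a,b\in F$, $a^{2}-\alpha^{2}b^{2}=1$, $z\in E$, and conversely every such matrix arises — the surjectivity onto $z\in E$ and onto the solution set of $a^{2}-\alpha^{2}b^{2}=1$ being exactly the content of the Hensel's Lemma argument carried out there. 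Hence the limiting subgroup is the full group of such lower-triangular matrices.

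Finally I would reconcile this lower-triangular shape with the upper-triangular one in the statement by conjugating with the Weyl element $w=\left(\begin{smallmatrix}0&1\\-1&0\end{smallmatrix}\right)\in\SL(2,E)$: one checks $w\left(\begin{smallmatrix}a-\alpha b&0\\z&a+\alpha b\end{smallmatrix}\right)w^{-1}=\left(\begin{smallmatrix}a+\alpha b&-z\\0&a-\alpha b\end{smallmatrix}\right)$, which after relabelling $b\mapsto-b$ and $z\mapsto-z$ (both harmless, preserving $a^{2}-\alpha^{2}b^{2}=1$) is precisely the subgroup $\{\left(\begin{smallmatrix}a-\alpha b&z\\0&a+\alpha b\end{smallmatrix}\right)\mid a^{2}-\alpha^{2}b^{2}=1,\ z\in E\}\leq B_E^{+}$. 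Collecting the cases, $\Lambda$ is an $\SL(2,E)$-conjugate of either $\SL(2,F)$ or this subgroup, as claimed. The only genuinely non-formal point is the reduction to a single one-parameter torus, which rests on the finiteness of $I_H$ from Lemma~\ref{lem::nr_orbits_SL(F)_SL(E)} and on compactness of $\mathcal{K}$; the analytic core — that the limit group is neither larger nor smaller than the stated subgroup — is already supplied by Proposition~\ref{Chab_limits_SL(2,F)} through the two conditions of Proposition~\ref{prop::chabauty_conv}.
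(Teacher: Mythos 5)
Your proposal is correct and follows essentially the same route as the paper: the polar decomposition of Proposition \ref{prop::polar_decom} combined with the finiteness of boundary orbits from Lemma \ref{lem::nr_orbits_SL(F)_SL(E)} reduces everything to conjugation by powers of one hyperbolic element, the rotation to the diagonal torus together with Proposition \ref{Chab_limits_SL(2,F)} (and its Hensel argument, via the two conditions of Proposition \ref{prop::chabauty_conv}) identifies the limit group, and the rest is conjugacy bookkeeping. The only difference is that you make explicit several steps the paper leaves implicit — the subsequence extractions in $\mathcal{K}$ and $\mathcal{B}$, continuity of the conjugation action on $\mathcal{S}(\SL(2,E))$, the bounded/unbounded exponent dichotomy, and the final Weyl-element conjugation from the lower-triangular to the upper-triangular form — which is harmless and matches the paper's intent.
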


\begin{remark}  Notice this proof does not depend on whether the extension is ramified or unramified. 
\end{remark}

\section{Chabauty Limits of $\SL(2, \RR)$ inside $\SL(2, \CC)$} \label{sec:chab_real}%%%%%%%%%%%%%%%%%%%%%%%%%%%%%

Recall that $\SL(2,\RR)$ is the isometry group of the real hyperbolic plane $\HH^2$ and $\SL(2,\CC)$ is the isometry group of \edit{the} real hyperbolic 3-space $\HH^3$.  Since $\CC$ is a quadratic extension of $\RR$, the situation  mirrors Section \ref{sec:chab_quad}.   The boundary of $\HH^3$ in the Poincar\'{e} ball model is the Riemann sphere, which may be thought of as \edit{the} union of points $\{ \big[ \begin{smallmatrix} 1 \\ x  \end{smallmatrix} \big] | x \in \CC \} \cup \{\big[ \begin{smallmatrix} 0 \\ 1 \end{smallmatrix} \big] \}$.   Since $\SL(2,\RR)$ acts on $\HH^3$ and its boundary, one may easily compute  that the stabilizer in $\SL(2,\RR)$ of the endpoint  $\big[ \begin{smallmatrix} 1 \\  i \end{smallmatrix} \big]$ is the compact subgroup $\{  \left( \begin{smallmatrix} a &b \\ - b &a\end{smallmatrix} \right)\; | \; a^2 +  b^2 =1 \; a,b\in \RR \} $.

For $k= \RR$ or $\CC$, the following stabilizers in  $\SL(2, k)$ are the Borel subgroups:
 \[ \Stab_{\SL(2,k)}\left( \begin{bmatrix} 1 \\0 \end{bmatrix}\right) = \{ \left( \begin{smallmatrix} a &b \\ 0 &a^{-1}\end{smallmatrix} \right)| b\in k, a \in k^{\times} \}= B^{+}_{k}\]
\[  \Stab_{\SL(2,k)}\left( \begin{bmatrix} 0 \\1 \end{bmatrix}\right) = \{ \left( \begin{smallmatrix} a &0 \\  b&a^{-1}\end{smallmatrix} \right)| b\in k, a \in k^{\times} \} = B^{-}_{k} .\]

Notice that with respect to the complex conjugation on $\CC$, which extends to an involution on $\SL(2,\CC)$, the group $\SL(2,\RR)$  is the fixed point group under complex conjugation. We apply  the polar decomposition (see for example Proposition 7.1.3 of \cite{HS}) to the pair $(\SL(2,\CC), \SL(2,\RR))$. More precisely, let $G$ be a semisimple Lie group with finite center and $H$ a symmetric subgroup. \editA{Let $\mathfrak g = \mathfrak h \oplus \mathfrak q$ be the decomposition into positive and negative eigenspaces, and $\mathfrak g = \mathfrak k \oplus \mathfrak p$ be the Cartan decomposition.} Let $\mathcal{K}$ be a maximal compact subgroup of $G$ and $\mathcal{B}$ the exponential of \editA{$\mathfrak b= \mathfrak a \cap \mathfrak q $, which is} a subalgebra of the maximal abelian subalgebra $\mathfrak a$ that has a \editA{non-empty} intersection with the positive and negative eigenspaces.  
 Then: 

\begin{proposition}[Proposition 7.1.3 of \cite{HS}] For any $g \in G$ there exists $k \in \mathcal{K}, b \in \mathcal{B}, h \in H$ such that $g = kbh$. Moreover $b$ is unique up to conjugation by the Weyl group $W_{H\cap \mathcal{K}}$. 
\end{proposition}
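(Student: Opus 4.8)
The statement is the classical generalized Cartan (or polar) decomposition for a semisimple symmetric space, recorded as Proposition~2.4 of \cite{HS}, and the plan is to reconstruct its standard proof. First I would fix a Cartan involution $\theta$ of $G$ commuting with the involution $\sigma$ whose fixed-point group is $H$ (such a $\theta$ exists), and write the two eigenspace decompositions $\mathfrak g=\mathfrak k\oplus\mathfrak p=\mathfrak h\oplus\mathfrak q$. Since $\theta\sigma=\sigma\theta$ these refine to $\mathfrak g=(\mathfrak h\cap\mathfrak k)\oplus(\mathfrak h\cap\mathfrak p)\oplus(\mathfrak q\cap\mathfrak k)\oplus(\mathfrak q\cap\mathfrak p)$; let $\mathfrak a_{\mathfrak q}$ be a maximal abelian subspace of $\mathfrak p\cap\mathfrak q$ and $\mathcal B=\exp(\mathfrak a_{\mathfrak q})$. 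Writing $M=G/K$ for the associated Hadamard symmetric space with base point $o=eK$, the asserted identity $G=\mathcal K\mathcal B H$ is equivalent to the geometric statement that the totally geodesic flat $\mathcal B\cdot o$ meets every $H$-orbit on $M$.

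Next I would run the midpoint argument. Because $\theta$ and $\sigma$ commute, $\sigma$ descends to an isometry of $M$ fixing $o$, and $H=G^{\sigma}$ preserves the fixed-point set $M^{\sigma}$, acting transitively on each of its connected components. Given $p\in M$, the geodesic midpoint $m$ of $p$ and $\sigma(p)$ is well defined (as $M$ is Hadamard), and since $\sigma$ is an isometry swapping the endpoints of the segment $[p,\sigma(p)]$ it fixes $m$; hence $m\in M^{\sigma}$. Choosing $h_1\in H$ with $h_1\cdot m=o$, the point $p':=h_1\cdot p$ has $o$ as midpoint of $p'$ and $\sigma(p')$, whence $p'=\exp(Y)\cdot o$ with $\exp(d\sigma(Y))\cdot o=\sigma(p')=\exp(-Y)\cdot o$; as $\exp|_{\mathfrak p}$ is a diffeomorphism onto $M$ this forces $d\sigma(Y)=-Y$, i.e. $Y\in\mathfrak p\cap\mathfrak q$.

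Then I would conjugate $Y$ into $\mathfrak a_{\mathfrak q}$ using the compact group $H\cap K$, which acts on $\mathfrak p\cap\mathfrak q$ with $\mathfrak a_{\mathfrak q}$ playing the role of a Cartan subspace: every element of $\mathfrak p\cap\mathfrak q$ is $(H\cap K)$-conjugate into $\mathfrak a_{\mathfrak q}$. I would prove this exactly as in the ordinary Cartan decomposition, by maximizing $\langle\mathrm{Ad}(k)Y,X_0\rangle$ over the compact orbit $(H\cap K)\cdot Y$ for a fixed regular $X_0\in\mathfrak a_{\mathfrak q}$ and showing that a maximizer commutes with $X_0$, hence lies in $\mathfrak a_{\mathfrak q}$ by maximality. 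Picking $k_2\in H\cap K$ with $X:=\mathrm{Ad}(k_2)Y\in\mathfrak a_{\mathfrak q}$ gives $k_2\cdot p'=\exp(X)\cdot o\in\mathcal B\cdot o$; unwinding, $g^{-1}\cdot o=(k_2h_1)^{-1}\exp(X)\cdot o$, so $g=k\exp(-X)h_1$ with $k\in\mathcal K$, $\exp(-X)\in\mathcal B$, $h_1\in H$. The uniqueness of $b$ up to $W_{H\cap\mathcal K}=N_{H\cap K}(\mathfrak a_{\mathfrak q})/Z_{H\cap K}(\mathfrak a_{\mathfrak q})$ I would deduce from the corresponding statement that two regular elements of $\mathfrak a_{\mathfrak q}$ lying in a common $\mathcal K\times H$ double coset differ by this little Weyl group.

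The main obstacle is the book-keeping of connected components: $H$ need not act transitively on all of $M^{\sigma}$, so the midpoint $m$ may land in a component not reachable from $o$ by $H$. This is precisely the source of the Weyl-group ambiguity in $b$: one either enlarges $\mathcal B$ by finitely many translates of $\exp(\mathfrak a_{\mathfrak q})$, each moved by a suitable element of the compact group, or absorbs this finite discrepancy into $\mathcal K$. Once that is handled, the midpoint construction and the polar-representation fact for $\mathfrak p\cap\mathfrak q$ are routine, and the decomposition $G=\mathcal K\mathcal B H$ follows.
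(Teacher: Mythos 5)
The paper does not prove this proposition at all: it is quoted verbatim from Heckman--Schlichtkrull and used as a black box to get the decomposition of $\SL(2,\CC)$ with respect to $\SL(2,\RR)$, so there is no in-paper argument for yours to match. Judged on its own, your reconstruction is the standard Lie-theoretic proof and is essentially correct: commuting Cartan involution $\theta$, the refinement $\mathfrak{g}=(\mathfrak{h}\cap\mathfrak{k})\oplus(\mathfrak{h}\cap\mathfrak{p})\oplus(\mathfrak{q}\cap\mathfrak{k})\oplus(\mathfrak{q}\cap\mathfrak{p})$, the midpoint argument on the Hadamard space $G/K$, conjugation of $Y\in\mathfrak{p}\cap\mathfrak{q}$ into $\mathfrak{a}_{\mathfrak{q}}$ by maximizing over the compact orbit of $H\cap K$, and the little Weyl group for uniqueness. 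Two corrections, neither fatal. First, your closing worry about connected components is vacuous, and the explanation you attach to it is wrong: the fixed-point set of $\sigma$ in a Hadamard manifold is convex, hence connected, and in fact $M^{\sigma}=\exp(\mathfrak{p}\cap\mathfrak{h})\cdot o$, which lies in the orbit of the identity component of $H$; so no enlargement of $\mathcal{B}$ or absorption into $\mathcal{K}$ is needed, and the $W_{H\cap\mathcal{K}}$-ambiguity in $b$ comes solely from the choice of conjugator carrying $Y$ into $\mathfrak{a}_{\mathfrak{q}}$ (equivalently the choice of chamber), not from components of $M^{\sigma}$. Second, in the unwinding the $H$-factor is $k_2h_1$, not $h_1$ (and the uniqueness statement you invoke for regular elements in a common $\mathcal{K}\times H$ double coset is asserted rather than proved, which is acceptable in a sketch but is the one nontrivial remaining step). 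It is worth noting that the paper's own Proposition \ref{prop::polar_decom}, its non-Archimedean analogue, is proved by a genuinely different but parallel geometric device: projection of $g^{-1}(x_0)$ onto the $H$-invariant subtree of the Bruhat--Tits tree and translation along a ray toward a boundary representative play exactly the role that your midpoint construction and the flat $\mathcal{B}\cdot o$ play in the Riemannian setting; your argument is the Archimedean counterpart of that proof rather than a transcription of anything in the paper.
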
 

Alternatively, for the pair  $(\SL(2,\CC), \SL(2,\RR))$ one can just notice that the $\SL(2,\RR)$-orbits on the boundary of  $\HH^3$  are  the $\SL(2,\RR)$-orbits for  $\big[ \begin{smallmatrix} 1 \\0 \end{smallmatrix} \big]$, and  $\big[\begin{smallmatrix} 1 \\i \end{smallmatrix}\big]$, respectively. Then proceed as in the proof of Proposition \ref{prop::polar_decom}.

As a consequence of the polar decomposition above, it suffices to consider Chabauty limits of $\SL(2,\RR)$ inside $\SL(2,\CC)$ by conjugating only by elements in $\mathcal{B}$. Since we want to use the diagonal matrices in order to conjugate and compute the Chabauty limits, we have to rotate/conjugate the subgroup $\SL(2,\RR)$ such that the endpoints  $\big[ \begin{smallmatrix} 1 \\0 \end{smallmatrix} \big]$ and  $\big[ \begin{smallmatrix} 0 \\1 \end{smallmatrix} \big]$ will be transverse to the $\HH^2$-like slice corresponding to the rotated $\SL(2,\RR)$. \editA{Explicitly, we consider the following} action on the endpoints 
\[ \left( \begin{matrix} 1& 0\\ i &1\end{matrix}  \right)\begin{bmatrix} 1 \\0 \end{bmatrix}= \begin{bmatrix} 1 \\  i \end{bmatrix} \qquad 
\left( \begin{matrix} (2i)^{-1}& 1 \\  0 & 2i \end{matrix}  \right)\begin{bmatrix} 0 \\1 \end{bmatrix}= \begin{bmatrix} 1 \\  2i \end{bmatrix}. \]

Thus we conjugate the group $\SL(2,\RR)$ by the matrix $\left( \begin{smallmatrix} (2i)^{-1}& 1 \\  0 & 2i \end{smallmatrix}  \right) \left( \begin{smallmatrix} 1& 0\\ i &1\end{smallmatrix}  \right)=  \editA{\left( \begin{smallmatrix} \frac{i}{2}& 1 \\  -2 & 2i \end{smallmatrix}  \right)}$. We obtain
\begin{equation*}
\begin{split}
&\editA{\left( \begin{smallmatrix} \frac{i}{2}& 1 \\  -2 & 2i \end{smallmatrix}  \right)} \SL(2,\RR) \editA{\left( \begin{smallmatrix} 2i& -1 \\ 2 & \frac{i}{2}\end{smallmatrix}  \right)}=\\
&= \edit{ \left \{ \left( \begin{smallmatrix} -a +2d  + i(2c+b)&\;\; -c -\frac{b}{4} + \frac{i}{2}(d-a)\\ -4c -4b + i(-4 a+4d) & 2a -d  +i(-b-2c) \end{smallmatrix} \right) \vert a,b,c,d \in \RR, \; ad-bc=1\right \}} . 
\end{split}
\end{equation*}

\begin{proposition}
Take a sequence of matrices $\{\left( \begin{smallmatrix} a_n& b_n \\  c_n & d_n \end{smallmatrix}  \right)\}_{n\geq 1} \subset  \SL(2,\RR)$ and the diagonal matrices $\{\left( \begin{smallmatrix} e^n& 0 \\  0 & e^{-n} \end{smallmatrix}  \right)\}_{n\geq 1} $ that are hyperbolic elements of  $\SL(2,\CC)$. Then 
any limit in $\SL(2,\CC)$ of  $$\left( \begin{smallmatrix} -a_n +2d_n  + i(2c_n+b_n) &\;\; e^{2n} \left[  -c_n -\frac{b_n}{4} + \frac{i}{2}(d_n-a_n) \right] \\  e^{-2n} \left[-4c_n -4b_n + i(-4 a_n+4d_n)\right] & 2a_n  -d_n  +i(-b_n-2c_n) \end{smallmatrix} \right)$$
is of the form $\left( \begin{smallmatrix} a-ib& z \\  0 &  a+ib \end{smallmatrix} \right)$, with $a,b \in \RR$ with $a^2 + b^{2}=1$, and $z \in \CC$. In particular, $z$ can be any complex number,  and any solution $a,b \in \RR$ of the equation $a^2 + b^{2}=1$ may appear.
\end{proposition}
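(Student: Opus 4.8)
The plan is to follow the proof of Proposition~\ref{Chab_limits_SL(2,F)} almost verbatim, with the quadratic extension $E/F$ replaced by $\CC/\RR$, the element $\alpha$ by $i$, and the powers $\omega_E^{2n}$ of a uniformizer by $e^{2n}$. The one structural change is that $e>1$, so $e^{2n}\to\infty$ while $e^{-2n}\to 0$, the reverse of the $p$-adic case; consequently it is now the \emph{super}-diagonal entry of the conjugated matrix that carries the free parameter of the limit, while the \emph{sub}-diagonal entry is forced to vanish. Writing $g_n=\left(\begin{smallmatrix}a_n&b_n\\ c_n&d_n\end{smallmatrix}\right)\in\SL(2,\RR)$, the displayed matrix is the conjugate of $\left(\begin{smallmatrix}(2i)^{-1}+i&1\\ -2&2i\end{smallmatrix}\right)g_n\left(\begin{smallmatrix}2i&-1\\ 2&i+(2i)^{-1}\end{smallmatrix}\right)$ by $\mathrm{diag}(e^n,e^{-n})$, so a limit in $\SL(2,\CC)$ exists precisely when each of its four entries converges, say to $A,B,C,D$; I would first record the real and imaginary parts of these.

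For the forward direction: from convergence of the super-diagonal entry $e^{2n}\bigl(-c_n-\tfrac{b_n}{4}+\tfrac{i}{2}(d_n-a_n)\bigr)\to B$ together with $e^{2n}\to\infty$ one gets $c_n+\tfrac{b_n}{4}\to 0$ and $a_n-d_n\to 0$. Adding the real parts of the two diagonal entries shows $a_n+d_n$ converges, which with $a_n-d_n\to 0$ makes $a_n,d_n$ bounded with a common limit $a$ and $\mathrm{Re}(A)=\mathrm{Re}(D)=a$; subtracting $2\bigl(c_n+\tfrac{b_n}{4}\bigr)$ from the imaginary part $2c_n+b_n$ of the upper-left entry shows $b_n$, hence $c_n$, converges, while the imaginary parts of the two diagonal entries come out opposite, so the diagonal limits are $a-ib$ and $a+ib$ for a suitable $b\in\RR$. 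At this point the sub-diagonal bracket $-4(b_n+c_n)-4i(a_n-d_n)$ is bounded, so on multiplying by $e^{-2n}\to 0$ its limit $C$ must be $0$. Hence the limit is upper triangular of the stated shape with $z:=B$, and its determinant is $1$ (the determinant being $1$ all along the sequence), so $a^2+b^2=1$; equivalently, pass to the limit in $a_nd_n-b_nc_n=1$.

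For the converse I would fix $z\in\CC$ and $a,b\in\RR$ with $a^2+b^2=1$; the computations above force the prospective limit of $g_n$ to be $g_\infty:=\left(\begin{smallmatrix}a&-2b\\ b/2&a\end{smallmatrix}\right)$, which lies in $\SL(2,\RR)$ because $a^2+b^2=1$. Then choose a matrix $X$ tangent to $\SL(2,\RR)$ at $g_\infty$ (one linear condition on the four entries of $X$) and satisfying $-X_{21}-\tfrac14 X_{12}=\mathrm{Re}(z)$ and $\tfrac12(X_{22}-X_{11})=\mathrm{Im}(z)$ --- this linear system is easily seen to be solvable for every $(a,b)$ on the unit circle --- and take $g_n\in\SL(2,\RR)$ of the form $g_\infty+e^{-2n}X+O(e^{-4n})$, the higher-order term being the correction needed to make $\det=1$ hold exactly (a quadratic equation in one entry with a real root for $n$ large; this is the step done by Hensel's Lemma~\ref{Hensel} over $\QQ_p$, and over $\RR$ it needs only elementary continuity). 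A direct substitution then shows the four entries of the conjugated matrix converge to $a-ib$, $z$, $0$ and $a+ib$. The main obstacle is only bookkeeping --- keeping track of which entry is free, and of the signs relating $(A,B,C,D)$ to $(a,b,z)$; there is no essential difficulty beyond that computation.
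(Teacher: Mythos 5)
Your proof is correct, and the forward direction is essentially the paper's: since $e^{2n}\to\infty$, the super-diagonal bracket must vanish in the limit, the diagonal entries then pin down $\lim a_n=\lim d_n$, $\lim b_n=-2b$, $\lim c_n=b/2$, the sub-diagonal entry dies because its bracket stays bounded while $e^{-2n}\to0$, and $a^2+b^2=1$ comes from the determinant. Where you genuinely diverge is the surjectivity step. The paper fixes $(a,b,z)$ and writes an approximating sequence explicitly, splitting into the cases $b^2=1$ and $b^2\neq1$ and solving a quadratic (in $c_n$, resp.\ $a_n$) with positive discriminant for large $n$ --- the real-analytic substitute for the Hensel step of Proposition \ref{Chab_limits_SL(2,F)}. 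You instead identify the forced limit $g_\infty=\left(\begin{smallmatrix} a & -2b\\ b/2 & a\end{smallmatrix}\right)\in\SL(2,\RR)$, solve a linear system for a tangent vector $X$ at $g_\infty$ producing the prescribed $z$ at order $e^{-2n}$ (solvable precisely because $(a,b)\neq(0,0)$), and then add an $O(e^{-4n})$ correction to restore $\det=1$; this is uniform in $(a,b)$ and avoids the paper's case split. One small imprecision: if you correct a single entry, the determinant condition is affine in that entry (with cofactor close to $a$, $-b/2$, $2b$ or $a$, at least one of which is bounded away from $0$), and a quadratic only arises when two entries are coupled as in the paper's construction --- a wording issue, not a gap. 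Incidentally, your relation $\lim a_n=\lim d_n$ is the right one; the paper's line ``$\lim a_n=-\lim d_n$'' is a typo.
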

\begin{proof}
The proof follows the same computations  as in  Proposition \ref{Chab_limits_SL(2,F)}. Just to summarize the computations one can easily obtain that: $\lim\limits_{n\to \infty} d_n-a_n=0,\lim\limits_{n\to \infty} 2d_n-a_n $ converge in $\RR$,  and $\lim\limits_{n\to \infty} 4c_n + b_n=0$, $\lim\limits_{n\to \infty} 2c_n + b_n$ converge in  $\RR$. Combining those facts, we have that $\lim\limits_{n\to \infty} a_n = -\lim\limits_{n\to \infty} d_n$, $\lim\limits_{n\to \infty} c_n = -1/4\lim\limits_{n\to \infty} b_n$,  all in $\RR$.  This implies 
\[ \lim\limits_{n\to \infty} e^{-2n} \left[-4c_n -4b_n + i(-4 a_n+4d_n)\right] =0 \]  
\[  \lim\limits_{n\to \infty}  -a_n +2d_n  + i(2c_n+b_n) = a-ib, \textrm{ and } \lim\limits_{n\to \infty} 2a_n  -d_n  +i(-b_n-2c_n) = a+ib,\] 
with $a^2+b^2=1$ and $a,b \in \RR$.

To prove we can obtain any matrix of the form  $\left( \begin{smallmatrix} a-ib& z \\  0 &  a+ib \end{smallmatrix} \right)$, take any $b$ and $z=z_1+iz_2 \in \CC$ with the required properties. Then:
\begin{enumerate}
\item
if $b^2=1$ then we have $\lim\limits_{n\to \infty} a_n = -\lim\limits_{n\to \infty} d_n=0$. Then take $d_n:=z_2e^{-2n}+a_n, b_n:= -4(e^{-2n}z_1 +c_n)$
and solve the equation 
 \edit{$$1= a_n d_n-c_nb_n= a_n^2 + 4c_n^2 + c_n 4z_1 e^{-2n} + a_nz_2e^{-2n}$$}
  for $c_n$. As $a_n$ and $e^{-2n}$ will converge to zero, \editA{for $n$ sufficiently large,} there are real solutions for $c_n$.
\item
if $b^2 \neq 1$ take $c_n:=b/2, d_n:=z_2e^{-2n}+a_n, b_n:= -4(e^{-2n}z_1 +b/2)$ and solve the equation 
\edit{$$1= a_n d_n-c_nb_n=a_n^2 + a_nz_2e^{-2n} + 2bz_1 e^{-2n} +b^2$$}
for $a_n$. When $e^{-2n}$ is very small, \editA{for $n$ sufficiently large,} there are real solutions for $a_n$.
\end{enumerate}
\end{proof}

Geometrically, we see that we \editA{can choose a hyperbolic element to act on the slice} $\HH^2$ \editA{to push it} to any point in the boundary at infinity of $\HH^3$.  
 To summarize, we have proved the following:
\begin{theorem}
\label{thm::cartan_lim_SL_r}
Any Chabauty limit of $\SL(2,\RR)$ inside  $\SL(2,\CC)$ is $\SL(2,\CC)$-conjugate to either $\SL(2,\RR)$, or to the subgroup 
$\{\left( \begin{smallmatrix} a-i b& z \\  0 &  a+i b \end{smallmatrix} \right) \; \vert \;  a,b \in \RR \text{ with }a^2 + b^{2}=1,  z \in \CC\} \leq B_{\CC}^{+}.$ 
\end{theorem}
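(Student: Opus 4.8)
The plan is to combine the polar decomposition of the symmetric pair $(\SL(2,\CC),\SL(2,\RR))$ with the explicit matrix computation of the preceding Proposition, and then to split into two cases according to whether the conjugating sequence stays bounded.

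First I would fix a Chabauty limit $L$ of $\SL(2,\RR)$, say $L=\lim_n g_n\SL(2,\RR)g_n^{-1}$ for a sequence $g_n\in\SL(2,\CC)$ (existence of convergent subsequences being guaranteed by compactness of the Chabauty space, Section \ref{sec::Chabauty}). By the polar decomposition $\SL(2,\CC)=\mathcal{K}\mathcal{B}\SL(2,\RR)$ (Proposition 2.4 of \cite{HS} applied as above) write $g_n=k_nb_nh_n$ with $k_n\in\mathcal{K}$, $b_n\in\mathcal{B}$, $h_n\in\SL(2,\RR)$. Since $h_n$ normalises $\SL(2,\RR)$ we get $g_n\SL(2,\RR)g_n^{-1}=k_n\bigl(b_n\SL(2,\RR)b_n^{-1}\bigr)k_n^{-1}$; passing to a subsequence so that $k_n\to k\in\mathcal{K}$ and using continuity of the conjugation action of $\SL(2,\CC)$ on the (metrisable, compact) Chabauty space, $L$ is the $k$-conjugate of a Chabauty limit of $b_n\SL(2,\RR)b_n^{-1}$. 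Here $\mathcal{B}=\exp(\mathfrak b)$ with $\mathfrak b$ one–dimensional and consisting of hyperbolic elements, so after conjugating $\SL(2,\RR)$ by the fixed matrix $\left(\begin{smallmatrix}(2i)^{-1}+i & 1\\ -2 & 2i\end{smallmatrix}\right)$ used above --- a change that only alters the set of Chabauty limits by an overall $\SL(2,\CC)$-conjugation --- we may assume $b_n=\left(\begin{smallmatrix}e^{t_n}&0\\0&e^{-t_n}\end{smallmatrix}\right)$ for a real sequence $t_n$. One reaches the same reduction by the tree-style argument of Proposition \ref{prop::polar_decom}: the $\SL(2,\RR)$-action on $\partial\HH^3$ has exactly the two orbits of $\left[\begin{smallmatrix}1\\i\end{smallmatrix}\right]$ and $\left[\begin{smallmatrix}1\\0\end{smallmatrix}\right]$, so $\mathcal{B}$ may be taken generated by a single hyperbolic element, namely the diagonal torus once the slice is rotated.

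Next I would run the dichotomy on $\{t_n\}$. If $\{t_n\}$ is bounded, pass to a subsequence with $t_n\to t$; then $b_n\to\left(\begin{smallmatrix}e^{t}&0\\0&e^{-t}\end{smallmatrix}\right)$ and $b_n\SL(2,\RR)b_n^{-1}\to\left(\begin{smallmatrix}e^{t}&0\\0&e^{-t}\end{smallmatrix}\right)\SL(2,\RR)\left(\begin{smallmatrix}e^{-t}&0\\0&e^{t}\end{smallmatrix}\right)$, an $\SL(2,\CC)$-conjugate of $\SL(2,\RR)$, hence so is $L$. If $\{t_n\}$ is unbounded, pass to a subsequence with $t_n\to+\infty$ or $t_n\to-\infty$; the two are interchanged by conjugation with the Weyl element $\left(\begin{smallmatrix}0&1\\-1&0\end{smallmatrix}\right)\in\SL(2,\CC)$ (which sends lower triangular to upper triangular and $z\mapsto -z$), so it suffices to treat $t_n\to+\infty$. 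This is exactly the situation of the preceding Proposition, whose computation uses only $e^{2t_n}\to\infty$ and $e^{-2t_n}\to 0$ and therefore applies verbatim to any such sequence: it shows that every subsequential limit of elements of $b_n\SL(2,\RR)b_n^{-1}$ has the form $\left(\begin{smallmatrix}a-ib&z\\0&a+ib\end{smallmatrix}\right)$ with $a,b\in\RR$, $a^2+b^2=1$, $z\in\CC$, and conversely (the ``in particular'' clause) that every such matrix is realised as such a limit. By the characterisation of Chabauty convergence (Proposition \ref{prop::chabauty_conv}) the limit of $b_n\SL(2,\RR)b_n^{-1}$ is then exactly the subgroup $\{\left(\begin{smallmatrix}a-ib&z\\0&a+ib\end{smallmatrix}\right)\mid a^2+b^2=1,\ z\in\CC\}\leq B_{\CC}^{+}$, which is visibly a closed subgroup ($\cong S^1\ltimes\CC$); undoing the fixed conjugation changes this only by an $\SL(2,\CC)$-conjugacy. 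Combining the two cases proves the theorem.

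The one genuinely delicate point is the reduction in the second paragraph: one must justify that the relevant part of $\mathcal{B}$ is a single one-parameter subgroup and that, after the rotation, it is the standard diagonal torus, so that the single computation of the preceding Proposition really does capture all limits. For $\SL_2$ this is transparent --- the pair has ``rank one'' and the two boundary orbits make it explicit --- but it is precisely where one invokes structure theory (Proposition 2.4 of \cite{HS}, or the argument of Proposition \ref{prop::polar_decom}) rather than a direct computation; everything else is the matrix bookkeeping already carried out in the preceding Proposition together with the formal properties of the Chabauty topology from Section \ref{sec::Chabauty}.
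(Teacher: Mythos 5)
Your proposal follows essentially the same route as the paper's proof: the polar decomposition of the pair $(\SL(2,\CC),\SL(2,\RR))$ (via Proposition 2.4 of \cite{HS}, or the boundary-orbit argument as in Proposition \ref{prop::polar_decom}), the rotation by the fixed matrix $\left( \begin{smallmatrix} (2i)^{-1} + i& 1 \\  -2 & 2i \end{smallmatrix}  \right)$ so that conjugation happens along the standard diagonal one-parameter group, and then the matrix computation of the preceding Proposition together with the Chabauty convergence criterion of Proposition \ref{prop::chabauty_conv}. The extra bookkeeping you supply --- the bounded/unbounded dichotomy for $t_n$, passing to a convergent subsequence of $k_n$, and reducing $t_n\to-\infty$ to $t_n\to+\infty$ (where the limit is the lower-triangular mirror group, conjugate to the stated one by the Weyl element) --- is correct and only makes explicit steps the paper leaves implicit.
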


\begin{remark} Notice this computation is not covered by \cite{CDW} since they only compute limits inside of $\PGL(n, \RR)$.  However, the result can easily be deduced from their Theorem 4.1, which holds for real and complex symmetric subgroups of semi-simple Lie groups. 
\end{remark}

 \section{Chabauty Limits of Symmetric Subgroups of $\SL(2,F)$}\label{sec:chab_sym}%%%%%%%%%%%%%%%%%%%%%%%%%%%%%

Let $F$ be a finite field-extension of $\QQ_p$ with ring of integers denoted by $\mathcal{O_F}$,  $\omega_F$ a uniformizer, and $k_F$ the residue field of $F$.  Recall $\vert F^{*}/(F^{*})^{2} \vert=4$. Let  $A=\left( \begin{smallmatrix} 0 &1 \\ a &0 \end{smallmatrix} \right)$, with $a\in F^{*}/(F^{*})^{2}= \{ 1, \omega_F, S, \omega_F S\}$, with $S$ a non-square in $k_F^{*}$, and $\theta_a:=\iota_A$ the corresponding $F$-involution of $\SL(2,F)$ (see \cite[Section 1.3]{HW_class}). Take $K_a:=F(\sqrt{a})$ the field extension of $F$ corresponding to $a$; if $a =1$ then $F=K_a$, and if $a\neq 1$ then $K_a$ is a quadratic extension of $F$. Then by Corollary \ref{lem::geom_k_inv_SL_2}  we know that  $A$ fixes pointwise only the two endpoints  $\xi_{\pm} := \big[ \begin{smallmatrix} 1 \\ \pm \sqrt{a} \end{smallmatrix} \big] \in  P^{1}K_a$. Take
 $$H_{\theta_a} :=\{g \in \SL(2,F) \; \vert \; \theta_a(g)=g\},$$
and together with the results of \cite[Section 3]{HW_class} \edit{(see Proposition \ref{prop::prop_shape_inv})} we have:
$$ H_{\theta_a}= \Fix_{\SL(2,K_a)}(\{\xi_{-},\xi_{+}\}) \cap \SL(2,F)= \{  \left( \begin{smallmatrix} x &y \\ ay &x\end{smallmatrix} \right) \in \SL(2,F)\;|\; x^2 - ay^2 =1 \}.$$

%\begin{remark}
%For the case $F=\QQ_p$ and $a=1$, the corresponding subgroup $H_{\theta_a}$ is a maximal $\QQ_p$-split torus in $\SL(2,\QQ_p)$, %more precisely is the diagonal subgroup of $\SL(2,\QQ_p)$ conjugated by the matrix {\blue $C= \left(\begin{smallmatrix} 1/2 &-1 \\ 1/2 &1 \end{smallmatrix} \right)$,} 
%{\blue i.e. $H_{\theta_a}= C Diag(2,\QQ_p) C^{-1}$ where $C= \left(\begin{smallmatrix} 1/2 &-1 \\ 1/2 &1 \end{smallmatrix} \right)$,} (see \cite[Section 3]{HW_class} or  Corollary \ref{lem::geom_k_inv_SL_2}). Then {\blue apply} the results of \cite{CLV} {\blue to} the case $\SL(2,\QQ_p)$ and $H_{\theta_a}$ (where they use very different methods than in this article) and {\blue see} the Chabauty limits of $H_{\theta_a}$ are either $\SL(2,\QQ_p)$-conjugates {\blue of either} $Diag(2,\QQ_p)\leq \SL(2,\QQ_p)$, {\blue or} of the subgroup $\{ \mu _2 \cdot \left( \begin{smallmatrix} 1& x \\ 0 &1  \end{smallmatrix} \right)\; |\; x \in \QQ_p  \}$ of the Borel  $B_{\QQ_p}^{+}\leq \SL(2,\QQ_p)$, where $\mu _2$ is the group of {\blue $2$nd} roots of unity in  $\QQ_p$.
%\end{remark}

%For the general case of $\SL(2,F)$ and the subgroups $H_{\theta_a}$,
To compute the Chabauty limits in the general case of $H_{\theta_a}\leq \SL(2,F)$ we  use the polar decomposition $G=\mathcal{K}\mathcal{B}H$ from Proposition \ref{prop::polar_decom} applied to the pair $(G,H)=(\SL(2,F), H_{\theta_a})$. By  Lemmas \ref{lem::nr_orbits_diag_SL}, \ref{lem::nr_orbits_H_SL}  the corresponding $\mathcal{B}$ is a finite union of groups $\mathcal{A}_i$. As in Section \ref{sec:chab_quad} it is enough to compute the Chabauty limits of $H_{\theta_a}$ under conjugation  by a sequence of elements from some fixed $\mathcal{A} \subset \mathcal{B}$. We  choose a group $\mathcal{A} \subset \mathcal{B}$ such that the corresponding computations will be easier.  Notice that $\Diag(2,F) \leq \SL(2,F)$ has fixed endpoints  $ \big[ \begin{smallmatrix} 0 \\1 \end{smallmatrix} \big] , \big[ \begin{smallmatrix}  1 \\0 \end{smallmatrix} \big]  \in \partial T_F$ (which are different from  $\xi_{\pm} := \big[  \begin{smallmatrix}  1 \\ \pm \sqrt{a} \end{smallmatrix} \big] \in  P^{1}K_a$), and it is  transverse to the bi-infinite geodesic line  $(\big[ \begin{smallmatrix} 1 \\ -\sqrt{a} \end{smallmatrix} \big] , \big[ \begin{smallmatrix}  1 \\  \sqrt{a}\end{smallmatrix} \big]) \subset T_{K_a}$. Thus, up to $H_{\theta_a}$-conjugacy, we can take  $\Diag(2,F) \subset \mathcal{B}$. So, we choose $\mathcal{A}$ to be generated by the diagonal matrix \edit{$\left \{\left( \begin{smallmatrix} w_{F}& 0 \\  0 & w_F^{-1} \end{smallmatrix}  \right)\right\}$} of $\SL(2,F)$, which is a hyperbolic element of translation length $2$ along the  bi-infinite geodesic line $( \big[ \begin{smallmatrix} 0 \\1 \end{smallmatrix} \big] , \big[ \begin{smallmatrix}  1 \\0 \end{smallmatrix} \big]) \subset T_F$. This procedure  affects the  Chabauty limits of $H_{\theta_a}$ only up to $\SL(2,F)$-conjugation. \edit{Our goal is to show:} 

\begin{theorem}
\label{thm::k_inv_chabauty}
Let $F$ be a finite field-extension of $\QQ_p$.  Let  $A=\left( \begin{smallmatrix} 0 &1 \\ a &0 \end{smallmatrix} \right)$, with $a\in F^{*}/(F^{*})^{2}$, and $\theta_a:=\iota_A$ the corresponding $F$-involution of $\SL(2,F)$. Take $H_{\theta_a} :=\{g \in \SL(2,F) \; \vert \; \theta_a(g)=g\}$.
Then any Chabauty limit of $H_{\theta_a}$ is either $\SL(2,F)$-conjugate to $H_{\theta_a}$, or to the subgroup \edit{$\{ \mu \left( \begin{smallmatrix} 1& 0 \\ z &1  \end{smallmatrix} \right)\; |\; z \in F , \mu \in \mu_2 \}$} of the Borel  $B_{F}^{-}\leq \SL(2,F)$, where $\mu _2$ is the group of $2^{nd}$ roots of unity in  $F$.
\end{theorem}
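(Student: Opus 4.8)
### Proof Strategy for Theorem \ref{thm::k_inv_chabauty}

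The plan is to apply the polar decomposition from Proposition \ref{prop::polar_decom} to the pair $(G,H) = (\SL(2,F), H_{\theta_a})$, which (using the orbit-counting Lemmas \ref{lem::nr_orbits_diag_SL} and \ref{lem::nr_orbits_H_SL}) expresses $G = \mathcal{K}\mathcal{B}H_{\theta_a}$ with $\mathcal{B}$ a \emph{finite} union $\{\Id\} \sqcup \bigsqcup_i A_i$ of cyclic groups generated by hyperbolic elements of translation length $2$. Since conjugating a Chabauty limit by an element of the compact set $\mathcal{K}$ only changes it by an $\SL(2,F)$-conjugate, and similarly choosing which $A_i$ to use only affects the answer up to $\SL(2,F)$-conjugation, it suffices to compute the limits of $H_{\theta_a}$ under conjugation by powers of a single conveniently chosen hyperbolic element. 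The key observation is that $Diag(2,F)$ fixes the endpoints $\bigl[\begin{smallmatrix}0\\1\end{smallmatrix}\bigr], \bigl[\begin{smallmatrix}1\\0\end{smallmatrix}\bigr]$, which are distinct from the fixed endpoints $\xi_\pm = \bigl[\begin{smallmatrix}1\\\pm\sqrt a\end{smallmatrix}\bigr]$ of $H_{\theta_a}$, and is transverse to the axis $(\xi_-,\xi_+)$; so up to $H_{\theta_a}$-conjugacy we may take $A_i$ generated by $\left(\begin{smallmatrix}\omega_F & 0\\0 & \omega_F^{-1}\end{smallmatrix}\right)$.

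The computation itself splits into the two halves of the Chabauty convergence criterion (Proposition \ref{prop::chabauty_conv}). For the ``upper bound'' direction, I would take an arbitrary sequence $\left(\begin{smallmatrix}x_n & y_n\\ a y_n & x_n\end{smallmatrix}\right) \in H_{\theta_a}$ with $x_n^2 - a y_n^2 = 1$, conjugate by $\left(\begin{smallmatrix}\omega_F^n & 0\\0 & \omega_F^{-n}\end{smallmatrix}\right)$ to obtain $\left(\begin{smallmatrix}x_n & \omega_F^{2n} y_n\\ \omega_F^{-2n} a y_n & x_n\end{smallmatrix}\right)$, and suppose this converges. The diagonal entries force $x = t$ in the limit; the lower-left entry $\omega_F^{-2n} a y_n$ converging forces $y_n \to 0$ (writing $y_n = \omega_F^{2n} C_n$ with $C_n$ convergent), hence the upper-right entry $\omega_F^{2n} y_n \to 0$, so $y = 0$; then $\det = 1$ gives $x^2 = 1$, i.e. $x \in \mu_2$. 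This shows every limit is contained in $\{\mu_2 \cdot \left(\begin{smallmatrix}1 & 0\\ z & 1\end{smallmatrix}\right) \mid z \in F\}$. For the ``attained'' direction (condition (1)), I would fix $z \in F$, set $y_n := \tfrac{z}{a}\omega_F^{2n}$ so $y_n \in \omega_F\mathcal{O}_F$ for $n$ large, and invoke Hensel's Lemma \ref{Hensel} on $f_n(x) = x^2 - a y_n^2 - 1$: since $f_n(1) \equiv 0$ and $f_n'(1) = 2 \not\equiv 0 \pmod{\omega_F}$ (using $p \neq 2$), there is $x_n \in \mathcal{O}_F$ with $x_n^2 - a y_n^2 = 1$ and $x_n \to 1$, producing a convergent sequence in $H_{\theta_a}$ with the desired limit $\left(\begin{smallmatrix}1 & 0\\ z & 1\end{smallmatrix}\right)$. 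Applying Hensel's Lemma near $\omega_F - 1$ instead of near $1$ recovers the $-1$ factor of $\mu_2$. Finally, the subgroup $\{\mu_2 \cdot \left(\begin{smallmatrix}1 & 0\\ z & 1\end{smallmatrix}\right)\}$ is visibly $\SL(2,F)$-conjugate (via the Weyl element) to the stated upper-triangular form $\{\mu_2 \cdot \left(\begin{smallmatrix}1 & x\\ 0 & 1\end{smallmatrix}\right)\} \leq B_F^+$.

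The main obstacle is not any single calculation — the matrix algebra and the Hensel argument are routine — but rather making the reduction step rigorous: one must be careful that the polar decomposition genuinely reduces the problem to a single $A_i$, that the finitely many $A_i$ all yield the same answer up to conjugacy (here the orbit structure from Corollary \ref{lem::geom_k_inv_SL_2} and Lemma \ref{lem::nr_orbits_H_SL}, together with the geometry of $T_F$, guarantees each relevant $A_i$ is $H_{\theta_a}$-conjugate into $Diag(2,F)$), and that conjugation by the compact part $\mathcal{K}$ and by $H_{\theta_a}$ itself (which fixes $H_{\theta_a}$) does not produce genuinely new limits. Once that bookkeeping is in place, the theorem follows by assembling the two convergence directions above.
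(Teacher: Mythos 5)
Your proposal is correct and follows essentially the same route as the paper's own proof: the polar decomposition of Proposition \ref{prop::polar_decom} together with Lemmas \ref{lem::nr_orbits_diag_SL} and \ref{lem::nr_orbits_H_SL} to reduce to conjugation by powers of $\left(\begin{smallmatrix}\omega_F & 0\\ 0 & \omega_F^{-1}\end{smallmatrix}\right)$, the same matrix computation forcing $x=t$, $y=0$, $x\in\mu_2$ in the limit, and the same Hensel's Lemma argument (including the $\omega_F-1$ trick for the $-1$ component) to realize every element of the limiting group. The only cosmetic difference is that you make explicit the final Weyl-element conjugation from the lower-triangular limit to the upper-triangular form stated in the theorem, which the paper leaves implicit.
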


\begin{proof} 
\editA{Notice first that if we conjugate $H_{\theta_a}$ with a sequence of elements from the compact set $\mathcal{K}$, then we get an $\SL(2,F)$-conjugate of $H_{\theta_a}$. Next we compute the rest of the Chabauty limits of $H_{\theta_a}$.}

Thus, consider a sequence  \edit{$\left \{\left( \begin{smallmatrix} x_n & y_n \\ y_n a & x_n \end{smallmatrix} \right)\right \}_{n\geq1} \subset    H_{\theta_a}$} and a sequence \edit{ $\left \{ \left( \begin{smallmatrix} \omega_F^{n} & 0 \\ 0 & \omega_F ^{-n}\end{smallmatrix} \right) \right\}_{n\geq1}\subset \mathcal{A}$} such that
\[ \begin{pmatrix} \omega_F ^{n}& 0 \\ 
0 &  \omega_F ^{-n}
\end{pmatrix} 
\begin{pmatrix} 
x_n & y_n \\ 
y_n a & x_n 
\end{pmatrix} 
\begin{pmatrix}  \omega_F ^{-n} & 0 \\ 
0 &  \omega_F ^{n}
\end{pmatrix}  
= 
\begin{pmatrix} 
x_n &\omega_F ^{2n} y_n \\ 
\omega_F ^{-2n}y_n a  & x_n 
\end{pmatrix} 
\xrightarrow{n \to \infty} 
\begin{pmatrix}  
x & y \\ 
z & t 
\end{pmatrix}  \in \SL(2,F).
\] 
Then we must have that $\lim\limits_{n\to \infty}x_n=x=t \in F$. As well, we have $\lim\limits_{n\to \infty} \omega_F ^{-2n}y_n =z/a \in F$ and $\lim\limits_{n\to \infty}\omega_F ^{2n} y_n =y \in F$. Writing $C_n:=\omega_F ^{-2n}y_n$ we get $y_n=\omega_F ^{2n}C_n$, and because  $\lim\limits_{n\to \infty} C_n=z/a \in F$, one obtains $\lim\limits_{n\to \infty} y_n=0 \in F$, so $\lim\limits_{n\to \infty} \omega_F ^{2n} y_n =0 =y \in F$. Since $ \left( \begin{smallmatrix}  
x & y \\ 
z & t 
\end{smallmatrix} \right) \in \SL(2,F)$ with $x=t$ and $y=0$, we get $x^2 = 1$, implying that $x \in \mu _2$ \edit{where $\mu_2$} denotes the group of $2^{nd}$ roots of unity in  $F$.

So far we have proven that a Chabauty limit of $H_{\theta_a}$ is contained in the subgroup \edit{$\{ \mu  \left( \begin{smallmatrix} 1& 0 \\ z &1  \end{smallmatrix} \right)| z \in F, \mu \in \mu_2  \} $,}  which is condition 2 of Proposition \ref{prop::chabauty_conv} for Chabauty convergence. 

It remains to show condition (1) of Proposition \ref{prop::chabauty_conv}. To show equality, fix some $z \in F$ and take $y_n: = \frac{z}{a} \omega_F^{2n}$, thus $\lim\limits_{n\to \infty} y_n=0$. In particular, \editA{since $z,a$ are both fixed,} for every $n$ large enough $y_n \in \omega_F \mathcal{O}_F$. Then, for each such $n$ we  apply Hensel's Lemma \ref{Hensel} to $f_n(x) = x^2 - ay_n ^2 -1$, where $f_n(1) \equiv 0 \; (\text{mod } \omega_F)$ and $f_n'(1) =2 \not \equiv 0 \;(\text{mod } \omega_F)$.  So there is a solution $x_n \in \mathcal{O}_F$ with $f_n(x_n)=0$ and $x_n \equiv 1  \; (\text{mod } \omega_F)$. In particular,  we  have $ x_n^2 - ay_n ^2=1$, $\lim\limits_{n\to \infty} \omega_F ^{-2n}y_n a=z \in F$, $\lim\limits_{n\to \infty} \omega_F ^{2n}y_n =0 \in F$, and $\lim\limits_{n\to \infty} x_n = x$, and $x=1$. By taking $f_n(\omega_F-1)$ and $f_n'(\omega_F-1)$, one obtains $x=-1$. \end{proof}

\begin{remark}
In the case $F=\QQ_p$ and $a=1$, we have $H_{\theta_a}= C \Diag(2,\QQ_p) C^{-1}$ where $C= \left(\begin{smallmatrix} \frac{1}{2} &-1 \\ \frac{1}{2} &1 \end{smallmatrix} \right)$. So $H_{\theta_a}$ is a maximal $\QQ_p$-split torus in $\SL(2,\QQ_p)$, (see \cite[Section 3]{HW_class} or  Corollary \ref{lem::geom_k_inv_SL_2}). Then results of \cite[\editA{Section 8.1}]{CLV} (using different methods than in this article) give the same result as Theorem \ref{thm::k_inv_chabauty}.
\end{remark}

\begin{bibdiv}
\begin{biblist}

\bib{BCL}{article}{
title = {On the linearity of lattices in affine buildings and ergodicity of the singular Cartan flow},
author = {Bader, Uri},
author = {Caprace, Pierre-Emmanuel},
author = {Lecureux, Jean},
journal = {Journal of the American Mathematical Society},
number = {2},
pages = {72},
volume = {32},
year = {2018},
}

\bib{Baik1}{article}{
  title={The space of geometric limits of abelian subgroups of $\PSL(2, \CC)$},
  author={Baik, Hyungryul}, 
  author= {Clavier, Lucien},
  journal={Hiroshima Mathematical Journal},
  volume={46},
  number={1},
  pages={1--36},
  year={2016},
  publisher={Hiroshima University, Mathematics Program}
}

\bib{Baik2}{article}{
  title={The space of geometric limits of one-generator closed subgroups of $\PSL(2, \RR)$},
  author={Baik, Hyungryul}, 
  author={Clavier, Lucien},
  journal={Algebraic \& Geometric Topology},
  volume={13},
  number={1},
  pages={549--576},
  year={2013},
  publisher={Mathematical Sciences Publishers}
}

\bib{BenoistOh}{article}{
  title={Polar decomposition for $p$-adic symmetric spaces},
  author={Benoist, Yves} 
  author= {Oh, Hee},
  journal={International Mathematics Research Notices},
  volume={2007},
  year={2007},
  publisher={Oxford Academic}
}

\bib{Beun}{article}{
  title={On the classification of orbits of symmetric subgroups acting on flag varieties of $\SL (2, k)$},
  author={Beun, Stacy L} 
  author={Helminck, Aloysius G},
  journal={Communications in Algebra},
  volume={37},
  number={4},
  pages={1334--1352},
  year={2009},
  publisher={Taylor \& Francis}
}

\bib{BLL}{article}{
  title={On the Chabauty space of $\SL(2,\RR)$, I: lattices and grafting},
  author={Biringer, Ian}, 
  author={Lazarovich, Nir},  
  author={Leitner, Arielle},
  journal={arXiv preprint arXiv:2110.14401},
  year={2021}
}

\bib{Bo}{book}{
   author={Borel, Armand},
   title={Linear algebraic groups},
   series={Graduate Texts in Mathematics},
   volume={126},
   edition={2},
   publisher={Springer-Verlag, New York},
   date={1991},
   pages={xii+288},
   isbn={0-387-97370-2},
   review={\MR{1102012}},
   doi={10.1007/978-1-4612-0941-6},
}

\bib{BoTi}{article}{
   author={Borel, Armand},
   author={Tits, Jacques},
   title={Homomorphismes ``abstraits'' de groupes alg\'{e}briques simples},
   language={French},
   journal={Ann. of Math. (2)},
   volume={97},
   date={1973},
   pages={499--571},
   issn={0003-486X},
   review={\MR{316587}},
   doi={10.2307/1970833},
}

\bib{BV}{article}{
  title={The Chabauty space of $\mathbb{Q}_p^{\times}$},
  author={Bourquin, Antoine},
  author={Valette, Alain},
  journal={Involve, a Journal of Mathematics},
  volume={14},
  number={1},
  pages={89--102},
  year={2021},
  publisher={Mathematical Sciences Publishers}
}

\bib{BHK}{article}{
  title={The Chabauty space of closed subgroups of the three-dimensional Heisenberg group},
  author={Bridson, Martin R}
  author= {de La Harpe, Pierre}, 
  author= {Kleptsyn, Victor},
  journal={Pacific journal of mathematics},
  volume={240},
  number={1},
  pages={1--48},
  year={2009},
  publisher={Mathematical Sciences Publishers}
}

\bib{CM}{article}{
  title={Future directions in locally compact groups: a tentative problem list},
  author={Caprace, Pierre-Emmanuel}, 
  author ={Monod, Nicolas},
  journal={New directions in locally compact groups},
  volume={447},
  pages={131--144},
  year={2018},
  publisher={Cambridge University Cambridge}
}

\bib{CR}{article}{
  title={Chabauty limits of simple groups acting on trees},
  author={Caprace, Pierre-Emmanuel}, 
  author={Radu, Nicolas},
  journal={Journal of the Institute of Mathematics of Jussieu},
  volume={19},
  number={4},
  pages={1093--1120},
  year={2020},
  publisher={Cambridge University Press}
}

  \bib{Ch}{article}{
   author={Chabauty, Claude},
   title={Limite d'ensembles et g\'eom\'etrie des nombres},
   language={French},
   journal={Bull. Soc. Math. France},
   volume={78},
   date={1950},
   pages={143--151},
   issn={0037-9484},
   review={\MR{0038983}},
}

\bib{CL}{article}{
   author={Ciobotaru, Corina},
   author={Leitner, Arielle},
   title={Chabauty limits of parahoric subgroups of ${\rm SL}(n,\Bbb Q_p)$},
   journal={Expo. Math.},
   volume={39},
   date={2021},
   number={3},
   pages={500--513},
   issn={0723-0869},
   review={\MR{4314029}},
   doi={10.1016/j.exmath.2021.01.001},
}

\bib{CLV}{article}{
   author={Ciobotaru, Corina},
   author={Leitner, Arielle},
   author={Valette, Alain},
   title={Chabauty limits of diagonal Cartan subgroups of ${\rm SL}(n, \Bbb
   Q_p)$},
   journal={J. Algebra},
   volume={595},
   date={2022},
   pages={69--104},
   issn={0021-8693},
   review={\MR{4357333}},
   doi={10.1016/j.jalgebra.2021.11.032},
}

\bib{Con}{unpublished}{
 author={Conrad, Keith},
  title={Field automorphisms of $\RR$ and $\QQ_p$},
  note={https://kconrad.math.uconn.edu/blurbs/gradnumthy/autRandQp.pdf},
}

   \bib{CoPau}{book}{
  author={Courtois, Gilles},
   author={Dal'Bo, Françoise},
   author={Paulin, Fr\'ed\'eric}
   title={Sur la dynamique des groupes de matrices et applications arithm\'etiques},
    note={Journ\'ees math\'ematiques X-UPS 2007, \\ \url{http://www.math.polytechnique.fr/xups/xups07-02.pdf}},
 pages={158},
 date={2007},
   doi={},}

\bib{CDW}{article}{
  title={Limits of geometries},
  author={Cooper, Daryl}
  author={Danciger, Jeffrey} 
  author={Wienhard, Anna},
  journal={Transactions of the American Mathematical Society},
  volume={370},
  number={9},
  pages={6585--6627},
  year={2018}
}

\bib{Cornulier}{article}{
  title={On the Chabauty space of locally compact abelian groups},
  author={Cornulier, Yves},
  journal={Algebraic \& Geometric Topology},
  volume={11},
  number={4},
  pages={2007--2035},
  year={2011},
  publisher={Mathematical Sciences Publishers}
}

%  \bib{Garfield}{book}{
%  author={Garrett, P.},
%  title={Buildings and Classical Groups},
%  publisher={Chapman and Hall},
%  date={1997},
%}

\bib{CEM}{book}{
author={Canary, Richard}, 
author={Epstein, David},
author={Marden, Albert}, 
title={Fundamentals of Hyperbolic Manifolds: Selected Expositions}, 
series={London Math Society Lecture Notes Series}, 
volume={328}, 
publisher={Cambridge University Press},
date={2006}
}

\bib{GJT}{book}{ 
 author={ Guivarc'h, Yves}, 
 author={Ji, Lizhen}, 
 author={Taylor, John},
 title={Compactifications of Symmetric Spaces},
 publisher={Birkhauser},
 series={Progress in Mathematics},
 volume={156},
 date={1998},
}

\bib{GR}{inproceedings}{
  title={Group-theoretic compactification of Bruhat--Tits buildings},
  author={Guivarc'h, Yves} 
  author= { R{\'e}my, Bertrand},
  booktitle={Annales Scientifiques de l’Ecole Normale Sup{\'e}rieure},
  volume={39},
  number={6},
  pages={871--920},
  year={2006},
  organization={Elsevier}
}

\bib{Haettel}{article}{
   author={Haettel, Thomas},
   title={Compactification de Chabauty des espaces sym\'etriques de type non
   compact},
   language={French, with English summary},
   journal={J. Lie Theory},
   volume={20},
   date={2010},
   number={3},
   pages={437--468},
   issn={0949-5932},
   review={\MR{2743099}},
}

\bib{Htt_2}{article}{
   author={Haettel, Thomas},
   title={Compactification de Chabauty de l'espace des sous-groupes de
   Cartan de ${\rm \SL}_n(\Bbb{R})$},
   journal={Math. Z.},
   volume={274},
   date={2013},
   number={1-2},
   pages={573--601},
   issn={0025-5874},
   review={\MR{3054345}},
   doi={10.1007/s00209-012-1086-9},
}

\bib{Harpe}{article}{
  title={Spaces of closed subgroups of locally compact groups},
  author={de La Harpe, Pierre},
  journal={arXiv preprint arXiv:0807.2030},
  year={2008}
}

\bib{HS}{inproceedings}{
  title={Harmonic Analysis and Special Functions on Symmetric Spaces, Acad},
  author={Heckman, GJ}, 
  author={Schlichtkrull, H},
  year={1994},
  volume={16}, 
  booktitle={Perspectives in Mathematics},
  publisher={Academic Press Inc.}, 
  place={San Diego, CA}
}

\bib{Hel_k_invol}{article}{
   author={Helminck, A. G.},
   title={On the classification of $k$-involutions},
   journal={Adv. Math.},
   volume={153},
   date={2000},
   number={1},
   pages={1--117},
   issn={0001-8708},
   review={\MR{1771689}},
   doi={10.1006/aima.1998.1884},
}

\bib{HelWD}{article}{
   author={Helminck, Aloysius G.},
   author={Wu, Ling},
   author={Dometrius, Christopher E.},
   title={Involutions of ${\rm SL}(n,k)$, $(n>2)$},
   journal={Acta Appl. Math.},
   volume={90},
   date={2006},
   number={1-2},
   pages={91--119},
   issn={0167-8019},
   review={\MR{2242950}},
   doi={10.1007/s10440-006-9032-7},
}

\bib{HW_class}{article}{
  title={Classification of involutions of $\SL (2, k)$},
  author={Helminck, Aloysius G}
  author= {Wu, Ling},
  journal={Communications in Algebra},
  volume={30},
  number={1},
  pages={193--203},
  year={2002},
  publisher={Taylor \& Francis}
}

\bib{HP}{article}{
  title={The space of closed subgroups of $\RR^2$},
  author={Hubbard, John} , 
  author={Pourezza, Ibrahim},
  journal={Topology},
  volume={18},
  number={2},
  pages={143--146},
  year={1979}
}

\bib{Kloeckner}{article}{
  title={The space of closed subgroups of $\mathbb{R}^n$ is stratified and simply connected},
  author={Kloeckner, Beno{\^\i}t},
  journal={Journal of topology},
  volume={2},
  number={3},
  pages={570--588},
  year={2009},
  publisher={Wiley Online Library}
}

\bib{LL}{article}{
  title={Local Limits of Connected Subgroups of $\SL(3,\RR)$},
  author={Lazarovich, Nir} 
  author= {Leitner, Arielle},
  journal={Comptes Rendus. Math{\'e}matique},
  volume={359},
  number={4},
  pages={363--376},
  year={2021}
}

\bib{Leitner_sln}{article}{
  title={Limits under conjugacy of the diagonal subgroup in $\SL(n,\RR)$},
  author={Leitner, Arielle},
  journal={Proceedings of the American Mathematical Society},
  volume={144},
  number={8},
  pages={3243--3254},
  year={2016}
}

\bib{Leitner_sl3}{article}{
  title={Conjugacy limits of the diagonal cartan subgroup in $\SL(3,\RR)$},
  author={Leitner, Arielle},
  journal={Geometriae Dedicata},
  volume={180},
  number={1},
  pages={135--149},
  year={2016},
  publisher={Springer}
}

%\bib{Pink}{article}{
%  title={Compact subgroups of linear algebraic groups},
%  author={Pink, Richard},
%  journal={Journal of Algebra},
%  volume={206},
%  number={2},
%  pages={438--504},
%  year={1998},
%  publisher={Elsevier}
%}
%
%\bib{Raghunathan}{article}{
%  title={The congruence subgroup problem},
%  author={Raghunathan, MS},
%  journal={Proceedings Mathematical Sciences},
%  volume={114},
%  number={4},
%  pages={299--308},
%  year={2004},
%  publisher={Springer}
%}

%\bib{Rossmann}{article}{
%  title={The structure of semisimple symmetric spaces},
%  author={Rossmann, Wulf},
%  journal={Canadian Journal of Mathematics},
%  volume={31},
%  number={1},
%  pages={157--180},
%  year={1979},
%  publisher={Cambridge University Press}
%}

\bib{Sally}{article}{
  title={An Introduction to $p$-adic Fields, Harmonic Analysis and the Representation Theory of $\SL_2$},
  author={Sally Jr., Paul J.},
  journal={Letters in Mathematical Physics},
  volume={46},
  pages={1--47},
  year={1998},
  doi={doi.org/10.1023/A:1007583108067},
 }

\bib{Schm}{article}{
   author={Schmidt, Friedrich Karl},
   title={Mehrfach perfekte K\"{o}rper},
   language={German},
   journal={Math. Ann.},
   volume={108},
   date={1933},
   number={1},
   pages={1--25},
   issn={0025-5831},
   review={\MR{1512831}},
   doi={10.1007/BF01452819},
}

\bib{Serre_tree}{book}{
   author={Serre, Jean-Pierre},
   title={Trees},
   series={Springer Monographs in Mathematics},
   note={Translated from the French original by John Stillwell;
   Corrected 2nd printing of the 1980 English translation},
   publisher={Springer-Verlag, Berlin},
   date={2003},
   pages={x+142},
   isbn={3-540-44237-5},
   review={\MR{1954121}},
}

\bib{Serre}{book}{
  title={A course in arithmetic},
  author={Serre, Jean-Pierre},
  volume={7},
  year={2012},
  publisher={Springer Science \& Business Media}
}

\bib{Stein}{article}{
   author={Steinberg, Robert},
   title={Abstract homomorphisms of simple algebraic groups (after A. Borel
   and J. Tits)},
   conference={
      title={S\'{e}minaire Bourbaki, 25\`eme ann\'{e}e (1972/1973), Exp. No. 435},
   },
   book={
      publisher={Springer, Berlin},
   },
   date={1974},
   pages={307--326. Lecture Notes in Math., Vol. 383},
   review={\MR{0414732}},
}

\bib{Stu}{unpublished}{
 author={Stulemeijer, Thierry},
  title={Semilinear automorphisms of reductive algebraic groups},
  note={arXiv:1901.05368},
  date={2019},
}

\bib{Stulemeijer}{article}{
  title={Chabauty limits of algebraic groups acting on trees the quasi-split case},
  author={Stulemeijer, Thierry},
  journal={Journal of the Institute of Mathematics of Jussieu},
  volume={19},
  number={4},
  pages={1031--1091},
  year={2020},
  publisher={Cambridge University Press}
}

\bib{Sutherland}{book}{
  title={Generalization of the Cartan and Iwasawa Decompositions to $\SL_2 (k)$},
  author={Sutherland, Amanda Kay},
  year={2015},
  publisher={North Carolina State University}
}
  
\bib{Trettel}{book}{
  title={Families of geometries, real algebras, and transitions},
  author={Trettel, Steve J},
  year={2019},
  publisher={University of California, Santa Barbara}
}

%\bib{Wag}{article}{
%   author={Wagner, C. G.},
%   title={Automorphisms of $p$-adic number fields},
%   journal={Amer. Math. Monthly},
%   volume={81},
%   date={1974},
%   pages={51--52},
%   issn={0002-9890},
%   review={\MR{332735}},
%   doi={10.2307/2318914},
%}

\bib{Wang}{article}{
  title={On the deformations of lattice in a Lie group},
  author={Wang, Hsien-Chung},
  journal={American Journal of Mathematics},
  volume={85},
  number={2},
  pages={189--212},
  year={1963},
  publisher={JSTOR}
}

%\bib{Wolf}{article}{
%  title={Finiteness of orbit structure for real flag manifolds},
%  author={Wolf, Joseph A},
%  journal={Geometriae Dedicata},
%  volume={3},
%  number={3},
%  pages={377--384},
%  year={1974},
%  publisher={Springer}
%}

\end{biblist}
\end{bibdiv}

\end{document}